\def\esssup{\mathop{\rm ess sup}} 
\newtheorem{defn}{Definition}[section]
\newtheorem{rem}{Remark}[section]
\newtheorem{thm}{Theorem}[section]
\newtheorem{lem}{Lemma}[section]
\newtheorem{prop}{Proposition}[section]
\newtheorem{coro}{Corollary}[section]
\newtheorem{eg}{Example}
\renewcommand{\P}{\mathbb{P}}
\newcommand{\Q}{\mathbb{Q}}
\newcommand{\R}{\mathbb{R}}
\newcommand{\E}{\mathbb{E}}
\newcommand{\N}{\mathbb{N}}
\newcommand{\F}{\mathcal{F}}
\newcommand{\cH}{\mathcal{H}}
\newcommand{\G}{\mathcal{G}}
\newcommand{\oG}{\overline{\mathcal{G}}}
\newcommand{\cN}{\mathcal{N}}
\newcommand{\Z}{\mathbb{Z}}
\newcommand{\T}{\mathcal{T}}
\newcommand{\bX}{{\bf{X}}}
\newcommand{\bx}{{\bf{x}}}
\newcommand{\A}{\mathcal{A}}
\newcommand{\B}{\mathcal{B}}
\newcommand{\eps}{\varepsilon}
\newcommand{\ms}{\scriptscriptstyle}
\numberwithin{equation}{section}
\title[]{On the Multi-Dimensional Controller-and-Stopper Games}\thanks{We would like to thank Mihai S\^irbu for his thoughtful suggestions. We also would like to thank the two anonymous referees whose suggestions helped us improve our paper.}
\author[]{Erhan Bayraktar}\thanks{This research is supported in part by the National Science Foundation under an applied mathematics research grants DMS-0906257 and  DMS-1118673, and a Career grant, and in part by the Susan M. Smith Professorship.} 
\address[Erhan Bayraktar]{Department of Mathematics, University of Michigan, 530 Church Street, Ann Arbor, MI 48109, USA}
\email{erhan@umich.edu}
\author[]{Yu-Jui Huang}
\address[Yu-Jui Huang]{Department of Mathematics, University of Michigan, 530 Church Street, Ann Arbor, MI 48109, USA}
\email{jayhuang@umich.edu}
\date{January 6, 2013}
\begin{document}

\begin{abstract}
We consider a zero-sum stochastic differential controller-and-stopper game in which the state process is a controlled 
diffusion evolving in a multi-dimensional Euclidean space. In this game, the controller affects both the drift and diffusion terms of the state process, and the diffusion term can be degenerate. Under appropriate conditions, we show that the game has a value and the value function is the unique viscosity solution to an obstacle problem for a Hamilton-Jacobi-Bellman equation. \\ \\ \textbf{Key Words:}  Controller-stopper games, weak dynamic programming principle, viscosity solutions, robust optimal stopping.

\end{abstract}

\maketitle

\section{Introduction}
We consider a zero-sum stochastic differential game of control and stopping under a fixed time horizon $T>0$. There are two players, the ``controller'' and the ``stopper,'' and a state process $X^\alpha$ which can be manipulated by the controller through the selection of the control $\alpha$. Suppose the game starts at time $t\in[0,T]$. While the stopper has the right to choose the duration of this game (in the form of a random time $\tau$), she incurs the running cost $f(s,X^{\alpha}_s,\alpha_s)$ at every moment $t\le s<\tau$, and the terminal cost $g(X^\alpha_\tau)$ at the time the game stops. Given the instantaneous discount rate $c(s,X^{\alpha}_s)$, the stopper would like to minimize her expected discounted cost
\begin{equation}\label{cost}
\E\left[\int_{t}^{\tau}e^{-\int_{t}^{s}c(u,X_{u}^{\alpha})du}f(s,X_{s}^{\alpha},\alpha_s)ds + e^{-\int_{t}^{\tau}c(u,X_{u}^{\alpha})du}g(X_{\tau}^{\alpha})\right]
\end{equation}
over all choices of $\tau$. At the same time, however, the controller plays against her by maximizing \eqref{cost} over all choices of $\alpha$.

Ever since the game of control and stopping was introduced by Maitra \& Sudderth \cite{MS96}, it has been known to be closely related to some common problems in mathematical finance, such as pricing American contingent claims (see e.g. \cite{KK98, KW00, KZ05}) and minimizing the probability of lifetime ruin (see \cite{BY11}). The game itself, however, has not been studied to a great extent except certain particular cases. Karatzas and Sudderth \cite{KS01} study a zero-sum controller-and-stopper game in which the state process $X^\alpha$ is a one-dimensional diffusion along a given interval on $\mathbb{R}$. Under appropriate conditions they prove that this game has a value and describe fairly explicitly a saddle point of optimal choices. It turns out, however, difficult to extend their results to multi-dimensional cases, as their techniques rely heavily on theorems of optimal stopping for one-dimensional diffusions. To deal with zero-sum multi-dimensional games of control and stopping, Karatzas and Zamfirescu \cite{KZ08} develop a martingale approach; also see \cite{baykaryao}, \cite{MR2746174} and \cite{MR2746173}. Again, it is shown that the game has a value, and a saddle point of optimal choices is constructed. However, it is assumed to be that the controller can affect only the drift term of $X^{\alpha}$. 

There is yet another subtle discrepancy between the one-dimensional game in \cite{KS01} and the multi-dimensional game in \cite{KZ08}: the use of ``strategies''. Typically, in a two-player game, the player who acts first would not choose a fixed static action. Instead, she prefers to employ a strategy, which will give different responses to different future actions the other player will take. This additional flexibility enables the player to further decrease (increase) the expected cost, if she is the minimizer (maximizer). For example, in a game with two controllers (see e.g. \cite{EK74-TAMS, EK74-JMAA, FS89, BL08, BMN12}), the controller who acts first employs a strategy, which is a function that takes the other controller's latter decision as input and generates a control. Note that the use of strategies is preserved in the one-dimensional controller-and-stopper game in \cite{KS01}: what the stopper employs is not simply a stopping time, but a strategy in the form of a random time which depends on the controller's decision. This kind of dynamic interaction is missing, however, in the multi-dimensional case: in \cite{KZ08}, the stopper is restricted to use stopping times, which give the same response to any choice the controller makes.

Zero-sum multi-dimensional controller-and-stopper games are also covered in Hamad{\`e}ne \& Lepeltier \cite{HL00} and Hamad\`{e}ne \cite{Hamadene06}, as a special case of mixed games introduced there. The main tool used in these papers is the theory of backward differential equations with two reflecting barriers. Interestingly, even though the method in \cite{HL00, Hamadene06} differs largely from that in \cite{KZ08}, these two papers also require a diffusion coefficient which is not affected by the controller, and do not allow the use of strategies. This is in contrast with the one-dimensional case in \cite{KS01}, where everything works out fine without any of the above restrictions. It is therefore of interest to see whether we can construct a new methodology under which multi-dimensional controller-and-stopper games can be analyzed even when the conditions required in \cite{KZ08,HL00,Hamadene06} fail to hold.

In this paper, such a methodology is built, under a Markovian framework. On the one hand, we allow both the drift and diffusion terms of the state process $X^{\alpha}$ to be controlled. On the other hand, we allow the players to use strategies. Specifically, we first define {\it non-anticipating} strategies in Definition~\ref{defn:strategy}. Then, in contrast to two-controller games where both players use strategies, only the stopper chooses to use strategies in our case (which coincides with the set-up in \cite{KS01}). This is because by the nature of a controller-and-stopper game, the controller cannot benefit from using non-anticipating strategies; see Remark~\ref{rem:no control strategy}. With this observation in mind, we give appropriate definitions of the upper value function $U$ and the lower value function $V$ in \eqref{U} and \eqref{V} respectively. Under this set-up, one presumably could construct a saddle point of optimal choices by imposing suitable assumptions on the cost functions, the dynamics of $X^\alpha$, the associated Hamiltonian, or the control set (as is done in \cite{KS01, KZ08, Hamadene06, HL00}; see Remark~\ref{rem:eps optimal}). However, we have no plan to impose assumptions for constructing a saddle point. Instead, we intend to work under a rather general framework, and determine under what conditions the game has a value (i.e. $U=V$) and how we can derive a PDE characterization for this value when it exists.

Our method is motivated by Bouchard \& Touzi \cite{BT11}, where the weak dynamic programming principle for stochastic control problems was first introduced. By generalizing the weak dynamic programming principle in \cite{BT11} to the context of controller-and-stopper games, we show that $V$ is a viscosity supersolution and $U^*$ is a viscosity subsolution to an obstacle problem for a Hamilton-Jacobi-Bellman equation, where $U^*$ denotes the upper semicontinuous envelope of $U$ defined as in \eqref{envelopes}. More specifically, we first prove a continuity result for an optimal stopping problem embedded in $V$ (Lemma~\ref{lem:conti}), which enables us to follow the arguments in \cite[Theorem 3.5]{BT11} even under the current context of controller-and-stopper games. We obtain, accordingly, a weak dynamic programming principle for $V$ (Proposition~\ref{prop:WDPP_super}), which is the key to proving the supersolution property of $V$ (Propositions~\ref{prop:vis super}). On the other hand, by generalizing the arguments in Chapter 3 of Krylov \cite{Krylov-book-80}, we derive a continuity result for an optimal control problem embedded in $U$ (Lemma~\ref{lem:L continuous}). This leads to a weak dynamic programming principle for $U$ (Proposition~\ref{prop:WDPP_sub}), from which the subsolution property of $U^*$ follows (Proposition~\ref{prop:vis sub}). Finally, under appropriate conditions, we prove a comparison result for the associated obstacle problem. Since $V$ is a viscosity supersolution and $U^*$ is a viscosity subsolution, the comparison result implies $U^*\le V$. Recalling that $U^*$ is actually larger than $V$ by definition, we conclude that $U^*=V$. This in particular implies $U=V$, i.e. the game has a value, and the value function is the unique viscosity solution to the associated obstacle problem. This is the main result of this paper; see Theorem~\ref{prop:U=V}. Note that once we have this PDE characterization, we can compute the value of the game using a stochastic numerical scheme proposed in Bayraktar \& Fahim \cite{BF11}. 

Another important advantage of our method is that it does not require any non-degeneracy condition on the diffusion term of $X^\alpha$. For the multi-dimensional case in \cite{KZ08,HL00,Hamadene06}, Girsanov's theorem plays a crucial role, which entails non-degeneracy of the diffusion term. Even for the one-dimensional case in \cite{KS01}, this non-degeneracy is needed to ensure the existence of the state process (in the weak sense). Note that Weerasinghe \cite{Weerasinghe06} actually follows the one-dimensional model in \cite{KS01} and extends it to the case with degenerate diffusion term; but at the same time, she assumes boundedness of the diffusion term, and some specific conditions including twice differentiability of the drift term and concavity of the cost function.    

It is worth noting that while \cite{KZ08,HL00,Hamadene06} do not allow the use of strategies and require the diffusion coefficient be control-independent and non-degenerate, they allow for non-Markovian dynamics and cost structures, as well as for non-Lipschitz drift coefficients. As a first step to allowing the use of strategies and incorporating controlled, and possibly degenerate, diffusion coefficients in a zero-sum multi-dimensional controller-and-stopper game, this paper focuses on proving the existence and characterization of the value of the game under a Markovian framework with Lipschitz coefficients. We leave the general non-Markovian and non-Lipschitz case for future research.   

The structure of this paper is as follows: in Section 2, we set up the framework of our study. In Section~\ref{sec:problem}, we define strategies, and give appropriate definitions of the upper value function $U$ and the lower value function $V$. In Sections~\ref{sec:super} and \ref{sec:sub}, the supersolution property of $V$ and the subsolution property $U^*$ are derived, respectively. In Section~\ref{sec:comparison}, we prove a comparison theorem, which leads to the existence of the value of the game and the viscosity solution property of the value function.         

\subsection{Notation} We collect some notation and definitions here for readers' convenience.
\begin{itemize}
\item Given a probability space $(E,\mathcal{I},P)$, we denote by $L^0(E,\mathcal{I})$ the set of real-valued random variables on $(E,\mathcal{I})$; for $p\in[1,\infty)$, let $L^p_{n}(E,\mathcal{I},P)$ denote the set of $\R^n$-valued random variables $R$ on $(E,\mathcal{I})$ s.t. $\E_{P}[|R|^p]<\infty$. For the ``$n=1$'' case, we simply write $L^p_1$ as $L^p$.  
\item $\R_+:=[0,\infty)$ and $\mathcal{S}:=\R^d\times\R_+\times\R_+$. 
\item $\mathbb{M}^d$ denotes the set of $d\times d$ real matrices.
\item Given $E\subseteq\R^n$, $\operatorname{LSC}(E)$ denotes the set of lower semicontinuous functions defined on $E$, and $\operatorname{USC}(E)$ denotes the set of upper semicontinuous functions defined on $E$.
\item Let $E$ be a normed space. For any $(t,x)\in[0,T]\times E$, we define two types of balls centered at $(t,x)$ with radius $r>0$ as follows
\begin{equation}\label{defn ball}
\begin{split}
B_r(t,x)&:=\{(t',x')\in[0,T]\times E\mid |t'-t|<r,\ |x'-x|<r\};\\
B(t,x;r)&:=\{(t',x')\in[0,T]\times E\mid t'\in(t-r,t],\ |x'-x|<r\}.
\end{split}
\end{equation}
We denote by $\bar{B}_r(t,x)$ and $\bar{B}(t,x,;r)$ the closures of $B_r(t,x)$ and $B(t,x;r)$, respectively. Moreover, given $w:[0,T]\times E\mapsto\R$, we define the upper and lower semicontinuous envelopes of $w$, respectively, by 
\begin{equation}\label{envelopes}
\begin{split}
w^{*}(t,x)&:=\lim_{\delta\downarrow 0}\sup\{w(t',x')\mid (t',x')\in[0,T)\times E\ \hbox{with}\ (t',x')\in B_\delta(t,x)\};\\
w_{*}(t,x)&:=\lim_{\delta\downarrow 0}\inf\{w(t',x')\mid (t',x')\in[0,T)\times E\ \hbox{with}\ (t',x')\in B_\delta(t,x)\}.
\end{split}
\end{equation}
\end{itemize}


\section{Preliminaries}\label{sec:prelim}
\subsection{The Set-up}\label{subsec:setup}
Fix $T>0$ and $d\in\N$. For any $t\in[0,T]$, let $\Omega^t:=C([t,T];\mathbb{R}^d)$ be the canonical space of continuous paths equipped with the uniform norm $\|\tilde{\omega}\|_{t,T}:=\sup_{s\in[t,T]}|\tilde{\omega}_s|$, $\tilde{\omega}\in\Omega^t$. Let $W^t$ denote the canonical process on $\Omega^t$, and $\mathbb{G}^{t}=\{\G^{t}_s\}_{s\in[t,T]}$ denote the natural filtration generated by $W^t$. Let $\P^t$ be the Wiener measure on $(\Omega^t,\G^{t}_T)$, and consider the collection of $\P^t$-null sets $\cN^t:=\{N\in\G^{t}_T \mid \P^t(N)=0\}$ and its completion $\overline{\cN}^t:=\{A\subseteq\Omega^t \mid A\subseteq N\ \hbox{for some}\ N\in\cN^t\}$. Now, define $\overline{\mathbb{G}}^t=\{\oG^t_s\}_{s\in[t,T]}$ as the augmentation of $\mathbb{G}^{t}$ by the sets in $\overline{\cN}^t$, i.e. $\oG^t_s:=\sigma(\G^{t}_s\cup\overline{\cN}^t)$, $s\in[t,T]$. For any $x\in\R^d$, we also consider $\G^{t,x}_s:=\G^t_s\cap\{W^t_t=x\}$, $\forall s\in[t,T]$. For $\Omega^t$, $W^t$, $\cN^t$, $\overline{\cN}^t$, $\G^t_s$, $\oG^t_s$ and $\G^{t,x}_s$, we drop the superscript $t$ whenever $t=0$.

Given $x\in\R^d$, we define for any $\tilde{\omega}\in\Omega^t$ the shifted path $(\tilde{\omega}+x)_\cdot:=\tilde{\omega}_\cdot+x$, and for any $A\subseteq\Omega^t$ the shifted set $A+x:=\{\tilde{\omega}\in\Omega^t \mid \tilde{\omega}-x\in A\}$. Then, we define the shifted Wiener measure $\P^{t,x}$ by $\P^{t,x}(F):=\P^t(F-x)$, $F\in\G^t_T$, and let $\overline{\P}^{t,x}$ denote the extension of $\P^{t,x}$ on $(\Omega^t,\oG^t_T)$. For $\P^{t,x}$ and $\overline{\P}^{t,x}$, we drop the superscripts $t$ and $x$ whenever $t=0$ and $x=0$. We let $\E$ denote the expectation taken under $\overline{\P}$.

Fix $t\in [0,T]$ and $\omega\in\Omega$. For any $\tilde{\omega}\in\Omega^t$, we define the concatenation of $\omega$ and $\tilde{\omega}$ at $t$ as 
\[
(\omega\otimes_t\tilde{\omega})_r:=\omega_r1_{[0,t]}(r) + (\tilde{\omega}_r-\tilde{\omega}_t+\omega_t)1_{(t,T]}(r),\ r\in[0,T].
\]
Note that $\omega\otimes_t\tilde{\omega}$ lies in $\Omega$.
Consider the shift operator in space $\psi_t:\Omega^t\mapsto\Omega^t$ defined by $\psi_t(\tilde{\omega}):=\tilde{\omega}-\tilde{\omega}_t$, and the shift operator in time $\phi_t:\Omega\mapsto\Omega^t$ defined by $\phi_t(\omega):=\omega|_{[t,T]}$, the restriction of $\omega\in\Omega$ on $[t,T]$. For any $r\in[t,T]$, since $\psi_t$ and $\phi_t$ are by definition continuous under the norms $\|\cdot\|_{t,r}$ and $\|\cdot\|_{0,r}$ respectively, $\psi_t:(\Omega^t,\G^t_r)\mapsto (\Omega^t,\G^t_r)$ and $\phi_t:(\Omega,\G_r)\mapsto(\Omega^t,\G^t_r)$ are Borel measurable. Then, for any $\xi:\Omega\mapsto\R$, we define the shifted functions $\xi^{t,\omega}:\Omega\mapsto\R$ by
\[
\xi^{t,\omega}(\omega'):=\xi(\omega\otimes_t\phi_t(\omega'))\ \hbox{for}\ \omega'\in\Omega. 
\] 

Given a random time $\tau:\Omega\mapsto[0,\infty]$, whenever $\omega\in\Omega$ is fixed, we simplify our notation as
\[
\omega\otimes_\tau\tilde{\omega}=\omega\otimes_{\tau(\omega)}\tilde{\omega},\ \ \ \ \xi^{\tau,\omega}=\xi^{\tau(\omega),\omega},\ \ \ \  \phi_{\tau}=\phi_{\tau(\omega)},\ \ \ \ \psi_{\tau}=\psi_{\tau(\omega)}.
\]

\begin{defn}\label{defn:F}
On the space $\Omega$, we define, for each $t\in[0,T]$, the filtration $\mathbb{F}^t=\{\F^t_s\}_{s\in[0,T]}$ by
\[
\F^t_s:=\mathcal{J}^t_{s+},\ \hbox{where}\
\mathcal{J}^t_s :=
\begin{cases}
\{\emptyset,\Omega\}, & \hbox{if}\ s\in [0,t],\\
\sigma\left(\phi_t^{-1}\psi_t^{-1}\G^{t,0}_s\cup\overline{\cN}\right), & \hbox{if}\ s\in[t,T].
\end{cases}
\] 
We drop the superscript $t$ whenever $t=0$.
\end{defn}

\begin{rem}
Given $t\in[0,T]$, note that $\F^t_s$ is a collection of subsets of $\Omega$ for each $s\in[0,T]$, whereas $\G^t_s$, $\overline{\G}^t_s$ and $\G^{t,x}_s$ are collections of subsets of $\Omega^t$ for each $s\in[t,T]$.
\end{rem}

\begin{rem}
By definition, $\mathcal{J}_s=\oG_s$ $\forall s\in[0,T]$; then the right continuity of $\overline{\mathbb{G}}$ implies $\F_s=\oG_s$ $\forall s\in[0,T]$ i.e. $\mathbb{F}=\overline{\mathbb{G}}$. Moreover, from Lemma~\ref{lem:measurability} (iii) in Appendix~\ref{sec:appendix} and the right continuity of $\overline{\mathbb{G}}$, we see that $\F^t_s\subseteq\oG_s=\F_s\ \forall s\in[0,T]$, i.e. $\mathbb{F}^t\subseteq\mathbb{F}$. 
\end{rem}

\begin{rem}
Intuitively, $\mathbb{F}^t$ represents the information structure one would have if one starts observing at time $t\in[0,T]$. More precisely, for any $s\in[t,T]$, $\G^{t,0}_s$ represents the information structure one obtains after making observations on $W^t$ in the period $[t,s]$. One could then deduce from $\G^{t,0}_s$ the information structure $\phi_t^{-1}\psi_t^{-1}\G^{t,0}_s$ for $W$ on the interval $[0,s]$.
\end{rem}

We define $\T^t$ as the set of all $\mathbb{F}^t$-stopping times which take values in $[0,T]$ $\overline{\P}$-a.s., and $\A_t$ as the set of all $\mathbb{F}^t$-progressively measurable $M$-valued processes, where $M$ is a separable metric space. Also, for any $\mathbb{F}$-stopping times $\tau_1,\tau_2$ with $\tau_1\le\tau_2$ $\overline{\P}$-a.s., we denote by $\T^t_{\tau_1,\tau_2}$ the set of all $\tau\in\T^t$ which take values in $[\tau_1,\tau_2]$ $\overline{\P}$-a.s. Again, we drop the sub- or superscript $t$ whenever $t=0$.

\subsection{The State Process}
Given $(t,x)\in[0,T]\times\R^d$ and $\alpha\in\A$, let $X^{t,x,\alpha}$ denote a $\mathbb{R}^d$-valued process satisfying the following SDE:
\begin{equation}\label{SDE X}
dX^{t,x,\alpha}_s=b(s,X^{t,x,\alpha}_s,\alpha_s)ds+\sigma(s,X^{t,x,\alpha}_s,\alpha_s)dW_s,\ \ \ s\in[t,T],
\end{equation}
with the initial condition $X_{t}^{t,x,\alpha}=x$. 
Let $\mathbb{M}^d$ be the set of $d\times d$ real matrices. We assume that $b:[0,T]\times\R^d\times M\mapsto\R^d$ and $\sigma:[0,T]\times\R^d\times M\mapsto \mathbb{M}^d$ are deterministic Borel functions, and $b(t,x,u)$ and $\sigma(t,x,u)$ are continuous in $(x,u)$; moreover,
there exists $K>0$ such that for any $t\in[0,T],\ x,y\in\mathbb{R}^d$, and $u\in M$, 
\begin{eqnarray}
&&|b(t,x,u)-b(t,y,u)|+|\sigma(t,x,u)-\sigma(t,y,u)| \leq K|x-y|, \label{glo Lips 1}\\
&&|b(t,x,u)|+|\sigma(t,x,u)| \leq K (1+|x|).\label{linear grow}
\end{eqnarray}
The conditions above imply that: for any initial condition $(t,x)\in[0,T]\times\mathbb{R}^d$ and control $\alpha\in\mathcal{A}$, \eqref{SDE X} admits a unique strong solution $X^{t,x,\alpha}_\cdot$. Moreover, without loss of generality, we define
\begin{equation}\label{X=x}
X^{t,x,\alpha}_s:=x\ \ \ \ \hbox{for}\  s<t. 
\end{equation}

\begin{rem}\label{rem:est}
Fix $\alpha\in\A$. Under \eqref{glo Lips 1} and \eqref{linear grow}, the same calculations in \cite[Appendix]{Pham98} and \cite[Proposition 1.2.1]{Bouchard-note-07} yield the following estimates: for each $p\ge 1$, there exists $C_p(\alpha)>0$ such that for any $(t,x), (t',x')\in[0,T]\times\R^d$, and $h\in[0,T-t]$,
\begin{align}
\E\left[\sup_{\ms 0\le s\le T}|X^{t,x,\alpha}_s|^p\right]&\le C_p(1+|x|^p);\label{est 1}\\
\E\left[\sup_{\ms 0\le s\le t+h}|X^{t,x,\alpha}_s-x|^p\right]&\le C_ph^{\frac{p}{2}}(1+|x|^p);\label{est 2}\\
\E\left[\sup_{\ms 0\le s\le T}|X^{t',x',\alpha}_s-X^{t,x,\alpha}_s|^p\right]&\le C_p\left[|x'-x|^p+|t'-t|^{\frac{p}{2}}(1+|x|^p)\right]\label{est 3}.
\end{align}
\end{rem} 

\begin{rem}[{\bf flow property}]\label{rem:flow property}
By pathwise uniqueness of the solution to \eqref{SDE X}, for any $0\le t\le s\le T$, $x\in\R^d$, and $\alpha\in\A$, we have the following two properties:
\begin{itemize}
\item [(i)] $X^{t,x,\alpha}_r(\omega)= X^{s,X^{t,x,\alpha}_s,\alpha}_r(\omega)$ $\forall\ r\in[s,T]$, for $\overline{\P}$-a.e. $\omega\in\Omega$; see \cite[Chapter 2]{Bouchard-note-07} and \cite[p.41]{Pham-book}.
\item [(ii)] By (1.16) in \cite{FS89} and the discussion below it, for $\overline{\P}$-a.e. $\omega\in\Omega$, we have
\[
X^{t,x,\alpha}_r \left(\omega\otimes_s\phi_s(\omega')\right)= X^{s,X^{t,x,\alpha}_s(\omega),\alpha^{s,\omega}}_r\left(\omega'\right)\ \forall r\in[s,T],\ \hbox{for}\ \overline{\P}\hbox{-a.e}\ \omega'\in\Omega;
\]
see also \cite[Lemma 3.3]{Nutz12-Quasi-sure}.
\end{itemize}
\end{rem}

\subsection{Properties of Shifted Objects} Let us first derive some properties of $\F^t_T$-measurable random variables.

\begin{prop}\label{prop:xi indep. of F_t}
Fix $t\in[0,T]$ and $\xi\in L^0(\Omega,\F^t_T)$.
\begin{itemize}
\item [(i)] $\F^t_T$ and $\F_t$ are independent. This in particular implies that $\xi$ is independent of $\F_t$.
\item [(ii)] There exist $\overline{N}, \overline{M}\in\overline{\cN}$ such that: for any fixed $\omega\in\Omega\setminus\overline{N}$, $\xi^{t,\omega}(\omega')=\xi(\omega')\ \forall\omega'\in\Omega\setminus\overline{M}$. 
\end{itemize}
\end{prop}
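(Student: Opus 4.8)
The plan is to exploit the explicit description of $\mathbb{F}^t$ from Definition~\ref{defn:F} and the structure of the shift operators $\psi_t$, $\phi_t$, reducing everything to statements about the canonical Wiener space $\Omega^t$.

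For part (i), I would first establish the claim at the level of the "small" $\sigma$-algebras before augmentation, and then pass to the completion. Observe that $\F_t = \oG_t$ is generated (up to null sets) by $\G_t = \sigma(W_r : r \le t)$, while $\F^t_T$ is generated by $\phi_t^{-1}\psi_t^{-1}\G^{t,0}_T$, which only involves the increments $W_r - W_t$ for $r \in [t,T]$ (the conditioning $\{W^t_t = 0\}$ inside $\G^{t,0}$ precisely strips out the value at the base point, so $\psi_t$ is essentially redundant here). Independence of increments of Brownian motion then gives that these two $\sigma$-algebras are independent under $\P$, hence under $\oP$. To handle the augmentation, I would use the standard fact that if two $\sigma$-algebras are independent, so are the $\sigma$-algebras obtained by adjoining the $\oP$-null sets $\overline{\cN}$ to each; this is a routine monotone-class / $\pi$-$\lambda$ argument. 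The statement that $\xi$ is independent of $\F_t$ is then immediate since $\xi$ is $\F^t_T$-measurable. I expect this part to be essentially bookkeeping once the increment-independence observation is made; the only mild subtlety is keeping straight which $\sigma$-algebras live on $\Omega$ versus $\Omega^t$ and checking that $\phi_t^{-1}\psi_t^{-1}\G^{t,0}_T$ really is measurable with respect to the right filtration, which the excerpt has already arranged.

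For part (ii), the idea is that $\xi^{t,\omega}(\omega') = \xi(\omega \otimes_t \phi_t(\omega'))$ should not depend on $\omega$, because $\xi$ being $\F^t_T$-measurable means it only sees the path after time $t$ and only through its increments from $t$ onward, and the concatenation $\omega \otimes_t \phi_t(\omega')$ agrees with $\omega'$ on $(t,T]$ up to the additive shift $\omega_t - \omega'_t$ — precisely the kind of shift that $\psi_t^{-1}$ of the pinned $\sigma$-algebra $\G^{t,0}$ ignores. Concretely, I would first prove the claim for $\xi = 1_A$ with $A$ in the generating class $\phi_t^{-1}\psi_t^{-1}\G^{t,0}_T$ (before augmentation): for such $A$, a direct computation with the definitions of $\otimes_t$, $\phi_t$, $\psi_t$ shows $1_A^{t,\omega} = 1_A$ identically, for every $\omega$, with no exceptional set needed. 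A monotone-class argument extends this to all bounded $\mathcal{J}^t_T$-measurable $\xi$ (still with empty exceptional sets), and then to general $L^0$ via truncation. Finally, to pass from $\mathcal{J}^t_T = \sigma(\phi_t^{-1}\psi_t^{-1}\G^{t,0}_T \cup \overline{\cN})$ to $\F^t_T$ and to absorb the augmentation: any $\F^t_T$-measurable $\xi$ differs $\oP$-a.s. from some $\mathcal{J}^t_T$-measurable (indeed $\phi_t^{-1}\psi_t^{-1}\G^{t,0}_T$-measurable) $\hat\xi$, i.e. $\xi = \hat\xi$ off a set $\overline{N}_0 \in \overline{\cN}$; combining $\xi = \hat\xi$ on $\Omega \setminus \overline{N}_0$, the identity $\hat\xi^{t,\omega} = \hat\xi$ everywhere, and the fact that $\{\omega' : \omega \otimes_t \phi_t(\omega') \in \overline{N}_0\}$ is $\oP$-null for $\oP$-a.e. $\omega$ (this is a Fubini-type statement, and I would cite or invoke the measurability results in the Appendix, e.g. Lemma~\ref{lem:measurability}, together with the law-preserving property of the concatenation), yields the desired $\overline{N}, \overline{M} \in \overline{\cN}$.

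The main obstacle I anticipate is the last step of part (ii): controlling the shifted null set, i.e. showing that for $\oP$-a.e. $\omega$ the set $(\overline{N}_0)^{t,\omega} := \{\omega' : \omega \otimes_t \phi_t(\omega') \in \overline{N}_0\}$ is again $\oP$-null. This requires knowing that the map $\omega' \mapsto \omega \otimes_t \phi_t(\omega')$ pushes $\oP$ forward to (something equivalent to) $\oP$ for a.e. $\omega$ — essentially the disintegration of Wiener measure along the filtration $\F_t$ — which is exactly the kind of regular-conditional-probability statement the Appendix lemmas are designed to supply. Everything else is the standard "check on generators, then monotone class" routine and should go through mechanically.
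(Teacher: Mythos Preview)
Your plan is essentially correct and aligns closely with the paper's proof, with one genuine difference worth noting and one minor slip.

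For part (i), you propose a more elementary route than the paper. You invoke independence of Brownian increments directly: $\phi_t^{-1}\psi_t^{-1}\G^{t,0}_T$ is generated by $W_r-W_t$, $r\in[t,T]$, which is independent of $\G_t=\sigma(W_r:r\le t)$, and then you pass to the augmentations by a $\pi$-$\lambda$ argument. The paper instead routes through the regular-conditional-probability machinery (Proposition~\ref{prop:strong Markov 2}(i)), computing $\overline{\P}[A\cap B\mid\F_t]$ explicitly for $A=\phi_t^{-1}C$ with $C\in\cH^t_T$ and showing it equals $\overline{\P}(A)1_B$. Both work; yours is lighter and avoids leaning on the appendix lemmas, while the paper's approach keeps everything within the r.c.p.d.\ framework it has already built.

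For part (ii), your outline matches the paper almost exactly: verify $1_A^{t,\omega}=1_A$ identically for $A\in\phi_t^{-1}\cH^t_T$ (the paper does this via the $\sigma$-algebra $\Lambda=\{A:(\phi_t^{-1}A^{t,\omega})\Delta A\in\overline{\cN}\ \text{for a.e.}\ \omega\}$), then extend, then handle null sets via Lemma~\ref{lem:measurability 2}(i). One slip: you write that the monotone-class step extends to ``all bounded $\mathcal{J}^t_T$-measurable $\xi$ (still with empty exceptional sets)''. But $\mathcal{J}^t_T$ already contains $\overline{\cN}$, and for $A\in\overline{\cN}$ the identity $1_A^{t,\omega}=1_A$ fails pointwise in general---you only get that $\phi_t^{-1}A^{t,\omega}\in\overline{\cN}$ for a.e.\ $\omega$. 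The pre-augmentation $\sigma$-algebra you want at that stage is $\phi_t^{-1}\cH^t_T$ itself (already a $\sigma$-algebra, so no monotone class is even needed for indicators). This is harmless since your very next step handles the null sets anyway; just relabel. The obstacle you flag---that $\{\omega':\omega\otimes_t\phi_t(\omega')\in\overline{N}_0\}$ must be shown null for a.e.\ $\omega$---is precisely what Lemma~\ref{lem:measurability 2}(i) supplies, as you anticipated.
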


\begin{proof}
See Appendix~\ref{subsec:xi indep. of F_t}.
\end{proof}

Fix $\theta\in\T$. Given $\alpha\in\A$, we can define, for $\overline{\P}$-a.e. $\omega\in\Omega$, a control $\alpha^{\theta,\omega}\in\A_{\theta(\omega)}$ by  
\[
\alpha^{\theta,\omega}(\omega'):=\{\alpha_r^{\theta,\omega}(\omega')\}_{r\in[0,T]}=\left\{\alpha_r\left(\omega\otimes_{\theta}\phi_{\theta}(\omega')\right)\right\}_{r\in[0,T]},\ \omega'\in\Omega;
\]
see \cite[proof of Proposition 5.4]{BT11}. Here, we state a similar result for stopping times in $\T$.

\begin{prop}\label{lem: tau a stopping time a.s.}
Fix $\theta\in\T$. For any $\tau\in\T_{\theta,T}$, we have $\tau^{\theta,\omega}\in\T^{\theta(\omega)}_{\theta(\omega),T}$ for $\overline{\P}$-a.e. $\omega\in\Omega$.
\end{prop}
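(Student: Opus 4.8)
The plan is to verify the two defining requirements for $\tau^{\theta,\omega}$ separately, for $\overline{\P}$-a.e. $\omega$: (a) that $\tau^{\theta,\omega}$ takes values in $[\theta(\omega),T]$ $\overline{\P}$-a.s., and (b) that $\{\tau^{\theta,\omega}\le s\}\in\F^{\theta(\omega)}_s$ for every $s\in[0,T]$. The template is the treatment of the control analogue $\alpha^{\theta,\omega}\in\A_{\theta(\omega)}$ recalled just above (from \cite[proof of Proposition 5.4]{BT11}); the genuinely new ingredient is the value constraint at $\theta(\omega)$, which will be supplied by the ``freezing'' property of $\theta$ under concatenation at $\theta$. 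The tools I expect to use are Proposition~\ref{prop:xi indep. of F_t}, the measurability statement Lemma~\ref{lem:measurability} in the appendix, and the standard fact that a $\overline{\P}$-null set pulled back through $\omega'\mapsto\omega\otimes_{\theta}\phi_{\theta}(\omega')$ stays $\overline{\P}$-null for $\overline{\P}$-a.e. $\omega$. For the freezing property itself: since $\theta$ is an $\mathbb{F}$-stopping time and $\mathbb{F}=\overline{\mathbb{G}}$ is the augmentation of the right-continuous canonical filtration, $\theta$ agrees $\overline{\P}$-a.s. with a $\mathbb{G}$-stopping time $\theta'$, and $\theta'$ depends on a path only through its restriction to $[0,\theta'(\cdot)]$; because $\omega\otimes_{\theta}\phi_{\theta}(\omega')$ coincides with $\omega$ on $[0,\theta(\omega)]$, this gives $\theta\big(\omega\otimes_{\theta}\phi_{\theta}(\omega')\big)=\theta(\omega)$ for all $\omega'$, for $\overline{\P}$-a.e. $\omega$.

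Granting the freezing property, (a) is quick: apply the null-set-shifting fact to $N:=\{\theta\le\tau\le T\}^{c}$ to get, for a.e. $\omega$, a $\overline{\P}$-null set $N^{\theta,\omega}:=\{\omega'\in\Omega\mid \omega\otimes_{\theta}\phi_{\theta}(\omega')\in N\}$ off which $\theta(\omega)=\theta\big(\omega\otimes_{\theta}\phi_{\theta}(\omega')\big)\le \tau\big(\omega\otimes_{\theta}\phi_{\theta}(\omega')\big)=\tau^{\theta,\omega}(\omega')\le T$. For (b) I would reduce to discrete stopping times. Set $\hat\tau:=\tau\vee\theta\in\T_{\theta,T}$, so that $\hat\tau\ge\theta$ everywhere and $\hat\tau=\tau$ $\overline{\P}$-a.s., and choose $\mathbb{F}$-stopping times $\tau_{n}\downarrow\hat\tau$ with $\tau_{n}$ finitely valued and $\tau_{n}\ge\theta$, say $\tau_{n}=\sum_{i}t^{n}_{i}\,1_{A^{n}_{i}}$ with $A^{n}_{i}\in\F_{t^{n}_{i}}$ and $A^{n}_{i}\subseteq\{\theta\le t^{n}_{i}\}$. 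Then $\tau_{n}^{\theta,\omega}=\sum_{i}t^{n}_{i}\,1_{(A^{n}_{i})^{\theta,\omega}}$, and by the freezing property $(A^{n}_{i})^{\theta,\omega}=\emptyset$ whenever $t^{n}_{i}<\theta(\omega)$, while for $t^{n}_{i}\ge\theta(\omega)$ the shift of the $\F_{t^{n}_{i}}$-measurable indicator $1_{A^{n}_{i}}$ is $\F^{\theta(\omega)}_{t^{n}_{i}}$-measurable by Lemma~\ref{lem:measurability} (here one uses that the $[0,\theta(\omega)]$-portion of $\omega\otimes_{\theta}\phi_{\theta}(\omega')$ is frozen to $\omega$, so only the increments on $[\theta(\omega),t^{n}_{i}]$ enter). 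Hence, for a.e. $\omega$, each $\tau_{n}^{\theta,\omega}$ is an $\mathbb{F}^{\theta(\omega)}$-stopping time valued in $[\theta(\omega),T]$, and since $\mathbb{F}^{\theta(\omega)}$ is right-continuous, the decreasing limit $\hat\tau^{\theta,\omega}=\lim_{n}\tau_{n}^{\theta,\omega}$ is again such a stopping time, i.e. $\hat\tau^{\theta,\omega}\in\T^{\theta(\omega)}_{\theta(\omega),T}$. Finally $\tau^{\theta,\omega}=\hat\tau^{\theta,\omega}$ $\overline{\P}$-a.s. for a.e. $\omega$ by null-set shifting, which yields the claim — after, if one insists on a representative that literally satisfies the stopping-time property for $s<\theta(\omega)$ where $\F^{\theta(\omega)}_{s}$ is trivial, modifying $\tau^{\theta,\omega}$ on the $\overline{\P}$-null set $\{\tau^{\theta,\omega}<\theta(\omega)\}$.

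I expect the main obstacle to be the bookkeeping around null sets in the augmented filtration: making the freezing property of $\theta$ precise for $\mathbb{F}=\overline{\mathbb{G}}$ rather than the raw filtration, and checking that the shifted sets $(A^{n}_{i})^{\theta,\omega}$ genuinely lie in $\F^{\theta(\omega)}_{t^{n}_{i}}$ — essentially re-running the $\F^{t}_{s}$-measurability computations underlying Lemma~\ref{lem:measurability} in the presence of the concatenation operator $\omega\otimes_{\theta}\cdot$. By contrast, the value-constraint part (a) and the discrete-approximation skeleton of (b) are routine once the freezing property and Lemma~\ref{lem:measurability} are available.
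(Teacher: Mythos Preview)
Your proposal is correct and follows essentially the same skeleton as the paper's proof: approximate $\tau$ from above by dyadic/finitely-valued $\mathbb{F}$-stopping times, show each approximant shifts to an $\mathbb{F}^{\theta(\omega)}$-stopping time, and pass to the decreasing limit using the right-continuity of $\mathbb{F}^{\theta(\omega)}$. The only substantive correction is that the measurability input you need is Lemma~\ref{lem:measurability 2}(ii) (which handles $A\in\F_r$ and a random $\theta\in\T$) rather than Lemma~\ref{lem:measurability} (which is for $A\in\G_r$ and deterministic $t$); this is precisely the ``re-running'' you anticipate, and the paper has already packaged it for you---invoking it directly also makes your separate ``freezing property'' step unnecessary, since the case $t^n_i<\theta(\omega)$ is absorbed into the case analysis there.
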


\begin{proof}
See Appendix~\ref{subsec: tau a stopping time a.s.}.
\end{proof}

Let $\rho:M\times M\mapsto\R$ be any given metric on $M$. By \cite[p.142]{Krylov-book-80}, $\rho'(u,v):=\frac{2}{\pi}\arctan\rho(u,v)<1$ for $u,v\in M$ is a metric equivalent to $\rho$, from which we can construct a metric on $\A$ by
\begin{equation}\label{metric on A}
\tilde{\rho}(\alpha,\beta):=\E\left[\int_0^T\rho'(\alpha_t,\beta_t)dt\right]\ \hbox{for}\ \alpha,\beta\in\A.
\end{equation}   

Now, we state a generalized version of Proposition~\ref{prop:xi indep. of F_t} (ii) for controls $\alpha\in\A$. 

\begin{prop}\label{prop:alpha^t,w=alpha}
Fix $t\in[0,T]$ and $\alpha\in\A_t$. There exists $\overline{N}\in\overline{\cN}$ such that: for any $\omega\in\Omega\setminus\overline{N}$, $\tilde{\rho}(\alpha^{t,\omega},\alpha)=0$. Furthermore, for any $(s,x)\in[0,T]\times\R^d$, $X^{s,x,\alpha^{t,\omega}}_r(\omega')=X^{s,x,\alpha}_r(\omega')$, $r\in[s,T]$, for $\overline{\P}$-a.e. $\omega'\in\Omega$.     
\end{prop}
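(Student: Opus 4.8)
The plan is to reduce the statement about controls $\alpha \in \A_t$ to the statement about $\F^t_T$-measurable random variables, namely Proposition~\ref{prop:xi indep. of F_t}~(ii), and then transfer the conclusion to the state processes via the flow property in Remark~\ref{rem:flow property}~(ii). First I would recall that $\A_t$ consists of $\mathbb{F}^t$-progressively measurable $M$-valued processes, and in particular each such $\alpha$ is, after fixing the time coordinate, $\F^t_T$-measurable in the $\omega$-variable. The metric $\tilde\rho(\alpha,\beta) = \E\left[\int_0^T \rho'(\alpha_t,\beta_t)\,dt\right]$ vanishes exactly when $\alpha_r = \beta_r$ for Lebesgue-a.e.\ $r$, $\overline\P$-a.s.; so the goal $\tilde\rho(\alpha^{t,\omega},\alpha) = 0$ for all $\omega \notin \overline N$ amounts to finding a single $\overline\P$-null set $\overline N$ outside of which $\alpha^{t,\omega}_r(\omega') = \alpha_r(\omega')$ for a.e.\ $(r,\omega')$.

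The main issue — and the step I expect to be the real obstacle — is the uniformity of the exceptional null set over the continuum of times $r \in [0,T]$. Proposition~\ref{prop:xi indep. of F_t}~(ii) gives, for each fixed $r$, null sets $\overline N_r, \overline M_r$ with $(\alpha_r)^{t,\omega}(\omega') = \alpha_r(\omega')$ off them; but one cannot simply take a union over uncountably many $r$. The way around this is to invoke a Fubini/Tonelli argument on the product space $[0,T] \times \Omega \times \Omega$ (with Lebesgue measure $\times\ \overline\P \times \overline\P$): by progressive measurability the map $(r,\omega,\omega') \mapsto (\alpha_r)^{t,\omega}(\omega')$ is jointly measurable, and for $dr$-a.e.\ $r$ the set of $\omega$ where $(\alpha_r)^{t,\omega}(\cdot)$ fails to agree with $\alpha_r(\cdot)$ $\overline\P$-a.s.\ is null; hence the triple set $\{(r,\omega,\omega') : (\alpha_r)^{t,\omega}(\omega') \ne \alpha_r(\omega')\}$ has product measure zero. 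By Tonelli, for $\overline\P$-a.e.\ $\omega$ this set has $(dr \times \overline\P)$-measure zero in its $(r,\omega')$-sections, which is precisely $\tilde\rho(\alpha^{t,\omega},\alpha) = 0$; let $\overline N$ be the resulting $\overline\P$-null set of bad $\omega$'s. (One should double-check the definition $\alpha^{t,\omega}_r(\omega') = \alpha_r(\omega \otimes_t \phi_t(\omega'))$ is consistent with applying Proposition~\ref{prop:xi indep. of F_t}~(ii) componentwise to each $\xi = \alpha_r$; this uses that $\alpha \in \A_t$ so that $\alpha_r \in L^0(\Omega,\F^t_T)$, and the shift $\omega \otimes_t \phi_t(\cdot)$ here agrees with the concatenation operation in the hypothesis of that proposition since $\alpha$ does not depend on $\F_t$.)

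For the second assertion, fix $(s,x) \in [0,T] \times \R^d$ and consider the two strong solutions $X^{s,x,\alpha^{t,\omega}}$ and $X^{s,x,\alpha}$ of the SDE~\eqref{SDE X}. Since $\alpha^{t,\omega}$ and $\alpha$ agree $dr \times \overline\P$-a.e.\ (for $\omega$ outside the null set $\overline N$ just constructed), they drive the same SDE with the same initial condition, and the coefficients $b,\sigma$ evaluated along either control differ only on a $dr \times \overline\P$-null set; by pathwise uniqueness of the strong solution — guaranteed by the Lipschitz and linear-growth conditions~\eqref{glo Lips 1}--\eqref{linear grow} — the two solutions are indistinguishable, i.e.\ $X^{s,x,\alpha^{t,\omega}}_r(\omega') = X^{s,x,\alpha}_r(\omega')$ for all $r \in [s,T]$, $\overline\P$-a.e.\ $\omega'$. (Here I would note that enlarging $\overline N$ if necessary is harmless; and that the $s < t$ convention~\eqref{X=x} makes the claim trivially true on $[0,s)$, so only $r \in [s,T]$ needs the argument.) This completes the proof; the only genuinely delicate point throughout is the measurable-selection/Fubini step securing a $\omega$-uniform null set, which is exactly the technique used for the analogous statement about controls in \cite[proof of Proposition 5.4]{BT11}.
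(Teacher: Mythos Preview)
Your proof is correct and takes a different route from the paper's. The paper proceeds by approximation: it takes step controls $\alpha^n\to\alpha$ in $\tilde\rho$ (Krylov, Lemma~3.2.6), applies Proposition~\ref{prop:xi indep. of F_t}~(ii) at the finitely many time values determining each $\alpha^n$ so that $(\alpha^n)^{t,\omega}=\alpha^n$ off a single pair of null sets uniformly in $r$, and then uses the conditional-expectation identity of Proposition~\ref{prop:strong Markov 2}~(ii) to rewrite $0=\lim_n\tilde\rho(\alpha^n,\alpha)=\lim_n\int\tilde\rho((\alpha^n)^{t,\omega},\alpha^{t,\omega})\,d\overline\P(\omega)=\lim_n\int\tilde\rho(\alpha^n,\alpha^{t,\omega})\,d\overline\P(\omega)$; combining with $\alpha^n\to\alpha$ gives $\tilde\rho(\alpha^{t,\omega},\alpha)=0$ for $\overline\P$-a.e.\ $\omega$. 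Your Fubini argument on $[0,T]\times\Omega\times\Omega$ is more direct and sidesteps both the step-control approximation and the appeal to Proposition~\ref{prop:strong Markov 2}, at the price of having to verify joint measurability of $(r,\omega,\omega')\mapsto\alpha_r(\omega\otimes_t\phi_t(\omega'))$ in the \emph{completed} product $\sigma$-algebra. This does hold---the concatenation map $(\omega,\omega')\mapsto\omega\otimes_t\phi_t(\omega')$ is continuous and pushes $\overline\P\times\overline\P$ forward to $\overline\P$, so preimages of $\overline\P$-null sets are $\overline\P\times\overline\P$-null---but you should make that step explicit, since it is exactly the point the paper's approximation scheme is designed to avoid. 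For the second assertion both arguments invoke the same fact (controls agreeing $dr\times\overline\P$-a.e.\ yield indistinguishable strong solutions); the paper cites Krylov, Exercise~3.2.4, while you spell it out via pathwise uniqueness.
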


\begin{proof}
See Appendix~\ref{subsec:alpha^t,w=alpha}.
\end{proof}


\section{Problem Formulation}\label{sec:problem}
We consider a controller-and-stopper game under the finite time horizon $T>0$. While the controller has the ability to affect the state process $X^\alpha$ through the selection of the control $\alpha$, the stopper has the right to choose the duration of this game, in the form of a random time $\tau$. Suppose the game starts at time $t\in[0,T]$. The stopper incurs the running cost $f(s,X^{\alpha}_s,\alpha_s)$ at every moment $t\le s<\tau$, and the terminal cost $g(X^\alpha_\tau)$ at the time the game stops, where $f$ and $g$ are some given deterministic functions.
According to the instantaneous discount rate $c(s,X^{\alpha}_s)$ for some given deterministic function $c$, the two players interact as follows: the stopper would like to stop optimally so that her expected discounted cost could be minimized, whereas the controller intends to act adversely against her by manipulating the state process $X^\alpha$ in a way that frustrates the effort of the stopper. 

For any $t\in[0,T]$, there are two possible scenarios for this game. In the first scenario, the stopper acts first. At time $t$, while the stopper is allowed to use the information of the path of $W$ up to time $t$ for her decision making, the controller has advantage: she has access to not only the path of $W$ up to $t$ but also the stopper's decision. Choosing one single stopping time, as a result, might not be optimal for the stopper. Instead, she would like to employ a stopping {\it strategy} which will give different responses to different future actions the controller will take. 


\begin{defn}\label{defn:strategy}
Given $t\in[0,T]$, we say a function $\pi:\A\mapsto\T_{t,T}$ is an admissible stopping strategy on the horizon $[t,T]$ if it satisfies the following conditions:
\begin{itemize}
\item [(i)] for any $\alpha,\beta\in\A$, it holds for $\overline{\P}$-a.e. $\omega\in\Omega$ that
\begin{equation}\label{strategy 1}
\begin{split}
&\hbox{if}\ \min\{\pi[\alpha](\omega),\pi[\beta](\omega)\}\le \inf\left\{s\ge t\ \middle|\ \int_t^s\rho'(\alpha_r(\omega),\beta_r(\omega))dr \neq 0\right\},\\
&\hspace{0.2in}\hbox{then}\ \pi[\alpha](\omega)=\pi[\beta](\omega).
\end{split}
\end{equation}
Recall that $\rho'$ is a metric on $M$ defined right above \eqref{metric on A}.
\item [(ii)] for any $s\in[0,t]$, if $\alpha\in\A_s$, then $\pi[\alpha]\in\T^s_{t,T}$. 
\item [(iii)] for any $\alpha\in\A$ and $\theta\in\T$ with $\{\theta\le t\}\notin\overline{\cN}$, it holds for $\overline{\P}$-a.e. $\omega\in\{\theta\le t\}$ that
\[
\pi[\alpha]^{\theta,\omega}(\omega')=\pi[\alpha^{\theta,\omega}](\omega'),\ \hbox{for}\ \overline{\P}\hbox{-a.e.}\ \omega'\in\Omega.
\]
\end{itemize}
We denote by $\Pi_{t,T}$ the set of all admissible stopping strategies on the horizon $[t,T]$.
\end{defn}

\begin{rem}
Definition~\ref{defn:strategy} (i) serves as the non-anticipativity condition for the stopping strategies. The intuition behind it should be clear: Suppose we begin our observation at time $t$, and employ a strategy $\pi\in\Pi_{t,T}$. By taking the control $\alpha$ and following the path $\omega$, we decide to stop at the moment $\pi[\alpha](\omega)$. If, up to this moment, we actually cannot distinguish between the controls $\alpha$ and $\beta$ , then we should stop at the same moment if we were taking the control $\beta$. 

Moreover, as shown in Proposition~\ref{prop:equivalence} below, \eqref{strategy 1} is equivalent to the following statement:
\begin{equation}\label{strategy 2}
\hbox{For any}\ \alpha,\beta\in\A\ \hbox{and}\ s\in[t,T],\ 1_{\{\pi[\alpha]\le s\}}=1_{\{\pi[\beta]\le s\}}\ \hbox{for}\ \overline{\P}\hbox{-a.e.}\ \omega\in\{\alpha =_{[t,s)} \beta\},
\end{equation}
where $\{\alpha =_{[t,s)} \beta\}:=\{\omega\in\Omega\mid \alpha_r(\omega)=\beta_r(\omega)\ \hbox{for}\ \hbox{a.e.}\ r\in[t,s)\}$. This shows that Definition~\ref{defn:strategy} (i) extends the non-anticipativity of strategies from two-controller games (see e.g. \cite{BL08}) to current context of controller-and-stopper games. 

Also notice that \eqref{strategy 2} is similar to, yet a bit weaker than, Assumption (C5) in \cite{BMN12}. This is because in the definition of $\{\alpha =_{[t,s)} \beta\}$, \cite{BMN12} requires $\alpha_r=\beta_r$ for all, instead of almost every, $r\in[t,s)$.
\end{rem}

\begin{prop}\label{prop:equivalence}
Fix $t\in[0,T]$. For any function $\pi:\A\mapsto\T_{t,T}$, \eqref{strategy 1} holds iff \eqref{strategy 2} holds.
\end{prop}

\begin{proof}
For any $\alpha,\beta\in\A$, we set $\theta(\omega):=\inf\{s\ge t\mid \int_t^s\rho'(\alpha_r(\omega),\beta_r(\omega))dr \neq 0\}$.

{\bf Step 1:} Suppose $\pi$ satisfies \eqref{strategy 1}. For any $\alpha,\beta\in\A$, take some $N\in\overline{\cN}$ such that \eqref{strategy 1} holds for $\omega\in\Omega\setminus N$. Fix $s\in[t,T]$. Given $\omega\in\{\alpha =_{[t,s)} \beta\}\setminus N$, we have $s \le\theta(\omega)$. If $\pi[\alpha](\omega)\le\theta(\omega)$, then \eqref{strategy 1} implies $\pi[\alpha](\omega)=\pi[\beta](\omega)$, and thus $1_{\{\pi[\alpha]\le s\}}(\omega)=1_{\{\pi[\beta]\le s\}}(\omega)$. If $\pi[\alpha](\omega)>\theta(\omega)$, then \eqref{strategy 1} implies $\pi[\beta](\omega)>\theta(\omega)$ too. It follows that $1_{\{\pi[\alpha]\le s\}}(\omega)=0=1_{\{\pi[\beta]\le s\}}(\omega)$, since $s \le\theta(\omega)$. This already proves \eqref{strategy 2}.

{\bf Step 2:} Suppose \eqref{strategy 2} holds. Fix $\alpha,\beta\in\A$. By \eqref{strategy 2}, there exists some $N\in\overline{\cN}$ such that 
\begin{equation}\label{(i)'}
\hbox{for any}\ s\in\Q\cap[t,T],\ 1_{\{\pi[\alpha]\le s\}}=1_{\{\pi[\beta]\le s\}}\ \hbox{for}\ \omega\in\{\alpha =_{[t,s)} \beta\}\setminus N.
\end{equation}
Fix $\omega\in\Omega\setminus N$. For any $s\in\Q\cap[t,\theta(\omega)]$, we have $\omega\in\{\alpha =_{[t,s)} \beta\}$. Then \eqref{(i)'} yields 
\begin{equation}\label{(i)''}
1_{\{\pi[\alpha]\le s\}}(\omega)=1_{\{\pi[\beta]\le s\}}(\omega),\ \hbox{for all}\ s\in\Q\cap[t,\theta(\omega)].
\end{equation}
If $\pi[\alpha](\omega)\le\theta(\omega)$, take an increasing sequence $\{s_n\}_{n\in\N}\subset\Q\cap[t,\theta(\omega)]$ such that $s_n\uparrow \pi[\alpha](\omega)$. Then \eqref{(i)''} implies $\pi[\beta](\omega)> s_n$ for all $n$, and thus $\pi[\beta](\omega)\ge \pi[\alpha](\omega)$. Similarly, by taking a decreasing sequence $\{r_n\}_{n\in\N}\subset\Q\cap[t,\theta(\omega)]$ such that $r_n\downarrow \pi[\alpha](\omega)$, we see from \eqref{(i)''} that $\pi[\beta]\le r_n$ for all $n$, and thus $\pi[\beta](\omega)\le \pi[\alpha](\omega)$. We therefore conclude $\pi[\beta](\omega)=\pi[\alpha](\omega)$. Now, if $\pi[\beta](\omega)\le\theta(\omega)$, we may argue as above to show that $\pi[\alpha](\omega)=\pi[\beta](\omega)$. This proves \eqref{strategy 1}. 
\end{proof}

Next, we give concrete examples of strategies under Definition~\ref{defn:strategy}. 

\begin{eg}
Given $t\in[0,T]$, define $\lambda_t:\Omega\mapsto\Omega$ by $(\lambda_t(\omega))_\cdot:=\omega_{\cdot\wedge t}$. Recall the space $C([t,T];\R^d)$ of continuous functions mapping $[t,T]$ into $\R^d$. For any $x\in\R^d$, we define $\pi:\A\mapsto\T_{t,T}$ by 
\begin{equation}\label{S(X)}
\pi[\alpha](\omega):= S\left(\{X^{t,x,\alpha}_r(\omega)\}_{r\in[t,T]}\right),
\end{equation}
for some function $S:C([t,T];\R^d)\mapsto[t,T]$ satisfying $\{\xi\mid S(\xi)\le s\}\in\lambda_s^{-1}\mathcal{X}^t_T$ $\forall s\in[t,T]$, where $\mathcal{X}^t_T$ denotes the Borel $\sigma$-algebra generated by $C([t,T];\R^d)$. Note that the formulation \eqref{S(X)} is similar to the stopping rules introduced in the one-dimensional controller-and-stopper game in \cite{KS01}, and it covers concrete examples such as exit strategies of a Borel set (see e.g. \eqref{def pi^circ} below). We claim that Definition~\ref{defn:strategy} readily includes the formulation \eqref{S(X)}.

Let the function $\pi:\A\mapsto\T_{t,T}$ be given as in \eqref{S(X)}. First, for any $\alpha,\beta\in\A$, set $\theta:=\inf\{s\ge t\mid \int_t^s\rho'(\alpha_r(\omega),\beta_r(\omega))dr \neq 0\}$. Observing that the strong solutions $X^{t,x,\alpha}$ and $X^{t,x,\beta}$ coincide on the interval $[t,\theta)$ $\overline{\P}$-a.s., we conclude that $\pi$ satisfies Definition~\ref{defn:strategy} (i). Next, for any $s\in[0,t]$, since $X^{t,x,\alpha}$ depends on $\F_s$ only through the control $\alpha$, Definition~\ref{defn:strategy} (ii) also holds for $\pi$. To check Definition~\ref{defn:strategy} (iii), let us introduce, for any $\theta\in\T$ with $\{\theta\le t\}\notin\overline{\cN}$, the strong solution $\tilde{X}$ to the SDE \eqref{SDE X} with the drift coefficient $\tilde{b}(s,x,u):=1_{\{s<t\}}0+1_{\{s\ge t\}}b(s,x,u)$ and the diffusion coefficient $\tilde{\sigma}(s,x,u):=1_{\{s<t\}}0+1_{\{s\ge t\}}\sigma(s,x,u)$. Then, by using the pathwise uniqueness of strong solutions and Remark~\ref{rem:flow property} (ii), for $\overline{\P}$-a.e. $\omega\in\{\theta\le t\}$,
\[
X^{t,x,\alpha}_r(\omega\otimes_\theta\phi_\theta(\omega'))=\tilde{X}^{0,x,\alpha}_r(\omega\otimes_\theta\phi_\theta(\omega'))=\tilde{X}^{\theta(\omega),\tilde{X}^{0,x,\alpha}_\theta(\omega),\alpha^{\theta,\omega}}_r(\omega')=\tilde{X}_r^{\theta(\omega),x,\alpha^{\theta,\omega}}(\omega')=X_r^{t,x,\alpha^{\theta,\omega}}(\omega'),
\] 
$\forall r\in[t,T]$, for $\overline{\P}$-a.e. $\omega'\in\Omega$. This implies 
\[
\pi[\alpha]^{\theta,\omega}(\omega')=S(\{X^{t,x,\alpha}_r(\omega\otimes_\theta\phi_\theta(\omega'))\}_{r\in[t,T]})=S(\{X^{t,x,\alpha^{\theta,\omega}}_r(\omega')\}_{r\in[t,T]})=\pi[\alpha^{\theta,\omega}](\omega'),
\]
for $\overline{\P}$-a.e. $\omega'\in\Omega$, which is Definition~\ref{defn:strategy} (iii).
\end{eg}



Let us now look at the second scenario in which the controller acts first. In this case, the stopper has access to not only the path of $W$ up to time $t$ but also the controller's decision. The controller, however, does not use strategies as an attempt to offset the advantage held by the stopper. As the next remark explains, the controller merely chooses one single control because she would not benefit from using non-anticipating strategies.

\begin{rem}\label{rem:no control strategy}
Fix $t\in[0,T]$. Let $\gamma:\T\mapsto\A_t$ satisfy the following non-anticipativity condition: for any $\tau_1,\tau_2\in\T$ and $s\in[t,T]$, it holds for $\overline{\P}$-a.e. $\omega\in\Omega$ that 
\[
\hbox{if}\ \min\{\tau_1(\omega),\tau_2(\omega)\}>s,\ \hbox{then}\ (\gamma[\tau_1])_r(\omega)=(\gamma[\tau_2])_r(\omega)\ \hbox{for}\ r\in[t,s).
\]
Then, observe that $\gamma[\tau](\omega)=\gamma[T](\omega)$ on $[t,\tau(\omega))$ $\overline{\P}$-a.s. for any $\tau\in\T$. This implies that employing the strategy $\gamma$ has the same effect as employing the control $\gamma[T]$. In other words, the controller would not benefit from using non-anticipating strategies.  
\end{rem}

Now, we are ready to introduce the upper and lower value functions of the game of control and stopping. For $(t,x)\in[0,T]\times\R^d$, if the stopper acts first, the associated value function is given by 
\begin{equation}\label{U}
U(t,x):=\inf_{\pi\in\Pi_{t,T}}\sup_{\alpha\in\A_t}\E\bigg[\int_{t}^{\pi[\alpha]}e^{-\int_{t}^{s}c(u,X_{u}^{t,x,\alpha})du}f(s,X_{s}^{t,x,\alpha},\alpha_s)ds + e^{-\int_{t}^{\pi[\alpha]}c(u,X_{u}^{t,x,\alpha})du}g(X_{\pi[\alpha]}^{t,x,\alpha})\bigg].
\end{equation} 
On the other hand, if the controller acts first, the associated value function is given by 
\begin{equation}\label{V}
V(t,x):=\sup_{\alpha\in\mathcal{A}_t}\inf_{\tau\in\mathcal{T}^t_{t,T}}\E\bigg[\int_{t}^{\tau}e^{-\int_{t}^{s}c(u,X_{u}^{t,x,\alpha})du}f(s,X_{s}^{t,x,\alpha},\alpha_s)ds + e^{-\int_{t}^{\tau}c(u,X_{u}^{t,x,\alpha})du}g(X_{\tau}^{t,x,\alpha})\bigg].
\end{equation} 
By definition, we have $U\ge V$. We therefore call $U$ the upper value function, and $V$ the lower value function. We say the game has a value if these two functions coincide.

\begin{rem}
In a game with two controllers (see e.g. \cite{EK74-TAMS, EK74-JMAA, FS89, BL08}), upper and lower value functions are also introduced. However, since both of the controllers use strategies, it is difficult to tell, just from the definitions, whether one of the value functions is larger than the other (despite their names). In contrast, in a controller-stopper game, only the stopper uses strategies, thanks to Remark~\ref{rem:no control strategy}. We therefore get $U\ge V$ for free, which turns out to be a crucial relation in the PDE characterization for the value of the game.    
\end{rem}

We assume that the cost functions $f, g$ and the discount rate $c$ satisfy the following conditions: $f:[0,T]\times\R^d\times M\mapsto\R_+$ is Borel measurable, and $f(t,x,u)$ is continuous in $(x,u)$, and continuous in $x$ uniformly in $u\in M$ for each $t$; $g:\R^d\mapsto\R_+$ is continuous; $c:[0,T]\times\R^d\mapsto\R_+$ is continuous and bounded above by some real number $\bar{c}>0$. Moreover, we impose the following polynomial growth condition on $f$ and $g$ 
\begin{equation}\label{poly grow}
|f(t,x,u)|+|g(x)|\leq K(1+|x|^{\bar{p}})\ \text{for some}\ \bar{p} \ge 1.
\end{equation}

\begin{rem}\label{rem:eps optimal}
Presumably, by imposing additional assumptions, one could construct a saddle point of optimal choices for a controller-and-stopper game. For example, in the one-dimensional game in \cite{KS01}, a saddle point is constructed under additional assumptions on the cost function and the dynamics of the state process (see (6.1)-(6.3) in \cite{KS01}). For the multi-dimensional case, in order to find a saddle point, \cite{KZ08} assumes that the cost function and the drift coefficient are continuous with respect to the control variable, and the associated Hamiltonian always attains its infimum (see (71)-(73) in \cite{KZ08}); whereas \cite{Hamadene06} and \cite{HL00} require compactness of the control set.

In this paper, we have no plan to impose additional assumptions for constructing saddle points. Instead, we intend to investigate, under a rather general set-up, whether the game has a value and how we can characterize this value if it exists.    
\end{rem}

\begin{rem}
For any $(t,x)\in[0,T]\times\R^d$ and $\alpha\in\A$, the polynomial growth condition \eqref{poly grow} and \eqref{est 1} imply that
\begin{equation}\label{dominated}
\E\bigg[\sup_{t\leq r\leq T}\left(\int_{t}^{r}e^{-\int_{t}^{s}c(u,X_{u}^{t,x,\alpha})du}f(s,X_{s}^{t,x,\alpha},\alpha_s)ds + e^{-\int_{t}^{r}c(u,X_{u}^{t,x,\alpha})du}g(X_{r}^{t,x,\alpha})\right) \bigg]<\infty.
\end{equation}
\end{rem}

\begin{lem}\label{lem:convergence}
Fix $\alpha\in\A$ and $(s,x)\in[0,T]\times\R^d$. For any $\{(s_n,x_n)\}_{n\in\N}\subset[0,T]\times\R^d$ such that $(s_n,x_n)\to (s,x)$, we have
\begin{align}
&\E\bigg[\sup_{0\le r\le T}|g(X^{s_n,x_n,\alpha}_r)-g(X^{s,x,\alpha}_r)|\bigg]\to 0;\label{Y conti. 1}\\
&\E\int_0^T|1_{[s_n,T]}(r)f(r,X^{s_n,x_n,\alpha}_r,\alpha_r)-1_{[s,T]}(r)f(r,X^{s,x,\alpha}_r,\alpha_r)|dr\to 0\label{Y conti. 2}.
\end{align}
\end{lem}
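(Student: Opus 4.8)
The plan is to upgrade the pathwise stability estimate \eqref{est 3} to convergence of the (unbounded) path functionals appearing in \eqref{Y conti. 1}--\eqref{Y conti. 2}, using the polynomial growth \eqref{poly grow} together with the uniform moment bound \eqref{est 1} to supply uniform integrability, and the continuity of $g$ and of $x\mapsto f(r,x,u)$ to supply pointwise convergence. Throughout, write $\xi_n:=\sup_{0\le r\le T}|X^{s_n,x_n,\alpha}_r-X^{s,x,\alpha}_r|$. Since $(s_n,x_n)\to(s,x)$, \eqref{est 3} gives $\E[\xi_n^p]\to 0$ for every $p\ge 1$; in particular $\xi_n\to 0$ in $\overline{\P}$-probability, so every subsequence of $(\xi_n)$ has a further subsequence converging to $0$ $\overline{\P}$-a.s. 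Also, since $x_n\to x$ is bounded, \eqref{est 1} (with $p=\bar{p}+1$) bounds $\E[\sup_{0\le r\le T}|X^{s_n,x_n,\alpha}_r|^{\bar{p}+1}]$ uniformly in $n$; this is what produces the uniform integrability used below.

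For \eqref{Y conti. 1}, put $Z_n:=\sup_{0\le r\le T}|g(X^{s_n,x_n,\alpha}_r)-g(X^{s,x,\alpha}_r)|$. First I would show that $(Z_n)$ is uniformly integrable: by \eqref{poly grow}, $Z_n\le K\bigl(2+\sup_r|X^{s_n,x_n,\alpha}_r|^{\bar{p}}+\sup_r|X^{s,x,\alpha}_r|^{\bar{p}}\bigr)$, and raising to the power $q:=(\bar{p}+1)/\bar{p}>1$, taking expectations, and invoking the uniform bound just mentioned gives $\sup_n\E[Z_n^q]<\infty$. Next I would show $Z_n\to 0$ in probability: along any subsequence with $\xi_{n_k}\to 0$ $\overline{\P}$-a.s., fix such an $\omega$; the continuous path $r\mapsto X^{s,x,\alpha}_r(\omega)$ lies in some closed ball $B$, so for $k$ large all the paths $X^{s_{n_k},x_{n_k},\alpha}_\cdot(\omega)$ lie in a slightly larger closed ball $B'$, on which $g$ is uniformly continuous, whence $Z_{n_k}(\omega)\to 0$ (its supremand is at most the oscillation of $g$ over points of $B'$ within distance $\xi_{n_k}(\omega)$). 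Uniform integrability together with convergence in probability then yields $\E[Z_n]\to 0$.

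For \eqref{Y conti. 2}, I would split the integrand difference as $1_{[s_n,T]}(r)\bigl(f(r,X^{s_n,x_n,\alpha}_r,\alpha_r)-f(r,X^{s,x,\alpha}_r,\alpha_r)\bigr)+\bigl(1_{[s_n,T]}(r)-1_{[s,T]}(r)\bigr)f(r,X^{s,x,\alpha}_r,\alpha_r)$. The second term is supported, in $r$, on an interval of length $|s_n-s|$, so by \eqref{poly grow} and \eqref{est 1},
\[
\E\int_0^T\bigl|1_{[s_n,T]}(r)-1_{[s,T]}(r)\bigr|f(r,X^{s,x,\alpha}_r,\alpha_r)\,dr\le |s_n-s|\,K\bigl(1+\E[\sup_{0\le r\le T}|X^{s,x,\alpha}_r|^{\bar{p}}]\bigr)\to 0 .
\]
For the first term it suffices to show $\E\int_0^T|f(r,X^{s_n,x_n,\alpha}_r,\alpha_r)-f(r,X^{s,x,\alpha}_r,\alpha_r)|\,dr\to 0$, and I would argue exactly as for \eqref{Y conti. 1}: the random variables $\int_0^T|f(r,X^{s_n,x_n,\alpha}_r,\alpha_r)-f(r,X^{s,x,\alpha}_r,\alpha_r)|\,dr$ are bounded in $L^q$ by \eqref{poly grow} and \eqref{est 1}, hence uniformly integrable; and along any subsequence with $\xi_{n_k}\to 0$ a.s., for a.e.\ fixed $\omega$ the paths all lie in a fixed compact set (so $\sup_k\sup_r|X^{s_{n_k},x_{n_k},\alpha}_r(\omega)|<\infty$), the integrand tends to $0$ for every $r$ by continuity of $f(r,\cdot,\alpha_r(\omega))$, and it is dominated in $r$ by the $\omega$-dependent constant $K\bigl(2+\sup_k\sup_r|X^{s_{n_k},x_{n_k},\alpha}_r(\omega)|^{\bar{p}}+\sup_r|X^{s,x,\alpha}_r(\omega)|^{\bar{p}}\bigr)$, so dominated convergence in $r$ makes the inner integral tend to $0$ a.s.\ along the subsequence. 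Hence these random variables tend to $0$ in probability, and with uniform integrability their expectations do too.

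I expect the main obstacle to be precisely that ordinary dominated convergence cannot be applied directly, for two reasons: $f$ and $g$ have only polynomial growth, so the natural dominating bound involves $\sup_r|X^{s_n,x_n,\alpha}_r|^{\bar{p}}$, which moves with $n$; and $g$ and $f(r,\cdot,u)$ are merely continuous, not uniformly continuous, on $\R^d$. Both are circumvented by the same device: the uniform moment estimate \eqref{est 1} gives uniform integrability (Vitali's theorem then replaces dominated convergence), while the a.s.\ uniform-in-$r$ convergence coming from \eqref{est 3}, passing to subsequences, confines all the paths to a single random compact set on which the relevant continuity is uniform. The moving indicator $1_{[s_n,T]}$ in \eqref{Y conti. 2} is a minor additional point, handled by the displayed decomposition together with \eqref{est 1}.
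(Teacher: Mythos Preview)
Your proof is correct and follows the same strategy as the paper: establish convergence in probability (via continuity on random compact sets, from \eqref{est 3}) and upgrade to $L^1$ using uniform integrability (via $L^q$-boundedness for some $q>1$, from \eqref{poly grow} and \eqref{est 1}). The only differences are cosmetic: the paper uses an explicit $\eps$--$\eta$/Markov-inequality argument in place of your subsequence device, takes $L^2$ rather than $L^{(\bar p+1)/\bar p}$, and for \eqref{Y conti. 2} cites \cite[Lemma~2.7.6]{Krylov-book-80} rather than giving your self-contained decomposition.
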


\begin{proof}
In view of \eqref{est 3}, we have, for any $p\ge 1$, 
\begin{equation}\label{est 3'}
\E\bigg[\sup_{\ms 0\le r\le T}|X^{s_n,x_n,\alpha}_r-X^{s,x,\alpha}_r|^p\bigg]\to 0.
\end{equation}
Thanks to the above convergence and the polynomial growth condition \eqref{poly grow} on $f$, we observe that \eqref{Y conti. 2} is a consequence of \cite[Lemma 2.7.6]{Krylov-book-80}. 

It remains to prove \eqref{Y conti. 1}. Fix $\eps$, $\eta>0$. Take $a>0$ large enough such that $\frac{2 C_1T(2+|x|)}{a}<\frac{\eta}{3}$, where $C_1>0$ is given as in Remark~\ref{rem:est}. Since $g$ is continuous, it is uniformly continuous on $\bar{B}_a(x):=\{y\in\R^d\mid |y-x|\le a\}$. Thus, there exists some $\delta>0$ such that $|g(x)-g(y)|<\eps$ for all $x,y\in\bar{B}_a(x)$ with $|x-y|<\delta$. Define 
\begin{align*}
&A:=\bigg\{\sup_{\ms 0\le r\le T}|X^{s,x,\alpha}_r-x|>a\bigg\},\ \ \ B_n:=\bigg\{\sup_{\ms 0\le r\le T}|X^{s_n,x_n,\alpha}_r-x|>a\bigg\},\\
&B'_n:=\bigg\{\sup_{\ms 0\le r\le T}|X^{s_n,x_n,\alpha}_r-x_n|>\frac{a}{2}\bigg\},\ \ \ D_n:=\bigg\{\sup_{\ms 0\le r\le T}|X^{s_n,x_n,\alpha}_r-X^{s,x,\alpha}_r|\ge \delta\bigg\}.
\end{align*}
By the Markov inequality and \eqref{est 2},   
\begin{equation*}
\overline{\P}(A)\le \frac{C_1\sqrt{T}(1+|x|)}{a}<\frac{\eta}{3},\ \ \ \ \overline{\P}(B'_n)\le \frac{2 C_1\sqrt{T}(1+|x_n|)}{a}<\frac{\eta}{3}\ \ \hbox{for}\ n\ \hbox{large enough}.
\end{equation*}
On the other hand, \eqref{est 3'} implies that $\overline{\P}(D_n)< \frac{\eta}{3}$ for $n$ large enough. Noting that $(B'_n)^c\subseteq B^c_n$ for $n$ large enough, we obtain
\begin{align*}
\overline{\P}\bigg(\sup_{\ms 0\le r\le T}|g(X^{s_n,x_n,\alpha}_r)-g(X^{s,x,\alpha}_r)| > \eps\bigg) &\le 1-\overline{\P}(A^c\cap B_n^c\cap D_n^c)=\overline{\P}(A\cup B_n\cup D_n)\\
&\le \overline{\P}(A\cup B'_n\cup D_n)<\eta,\ \ \ \hbox{for}\ n\ \hbox{large enough}.
\end{align*}
Thus, we have $h_n:=\sup_{\ms 0\le r\le T}|g(X^{s_n,x_n,\alpha}_r)-g(X^{s,x,\alpha}_r)|\to 0$ in probability. Finally, observing that the polynomial growth condition \eqref{poly grow} on $g$ and \eqref{est 1} imply that $\{h_n\}_{n\in\N}$ is $L^2$-bounded, we conclude that $h_n\to 0$ in $L^1$, which gives \eqref{Y conti. 1}.   
\end{proof}

\subsection{The Associated Hamiltonian} For $(t,x,p,A)\in[0,T]\times\mathbb{R}^d\times\mathbb{R}^d\times\mathbb{M}^d$,  we associate the following Hamiltonian with our mixed control/stopping problem:
\begin{equation}\label{H}
H(t,x,p,A):=\inf_{a\in M}H^a(t,x,p,A),
\end{equation}
where 
\[
H^a(t,x,p,A):=-b(t,x,a)\cdot p-\frac{1}{2}Tr[\sigma\sigma'(t,x,a)A]-
f(t,x,a).
\]
Since $b$, $\sigma$, and $f$ are assumed to be continuous only in $(x,a)$, and $M$ is a separable metric space without any compactness assumption, the operator $H$ may be neither upper nor lower semicontinuous.
As a result, we will need to consider an upper semicontinuous version of $H$ defined by 
\begin{equation}\label{bH}
\overline{H}(t,x,p,A):=\inf_{a\in M} (H^a)^*(t,x,p,A),
\end{equation}
where $(H^a)^*$ is the upper semicontinuous envelope of $H^a$, defined as in \eqref{envelopes}; see Proposition ~\ref{prop:vis super}. On the other hand, we will need to consider the lower semicontinuous envelope $H_*$, defined as in \eqref{envelopes}, in Proposition ~\ref{prop:vis sub}. Notice that $\overline{H}$ is different from the upper semicontinuous envelope $H^*$, defined as in \eqref{envelopes} (in fact, $\overline{H}\ge H^*$). See Remark~\ref{rem:why bH} for our choice of $\overline{H}$ over $H^*$.

\subsection{Reduction to the Mayer Form} Given $t\in[0,T]$ and $\alpha\in\A_t$, let us increase the state process to $(X,Y,Z)$, where
\begin{align*}
dY^{t,x,y,\alpha}_s &= -Y^{t,x,y,\alpha}_s c(s,X^{t,x,\alpha}_s)ds,\ s\in[t,T],\ \hbox{with}\ Y^{t,x,y,\alpha}_t=y\ge 0;\\
Z^{t,x,y,z,\alpha}_s &:= z + \int_{t}^s Y^{t,x,y,\alpha}_r f(r,X^{t,x,\alpha}_r,\alpha_r)dr,\ \ \hbox{for some}\ z\ge 0.
\end{align*} 
Set $\mathcal{S}:= \R^d\times\R_+\times\R_+$. For any $\bx:=(x,y,z)\in\mathcal{S}$, we define
\[\bX^{t,\bx,\alpha}_s:=
\left(\begin{array}{c}
X^{t,x,\alpha}_s\\Y^{t,x,y,\alpha}_s\\Z^{t,x,y,z,\alpha}_s\\
\end{array}\right),\]
and consider the function $F:\mathcal{S}\mapsto\R_+$ defined by
\[
F(x,y,z):=z+yg(x).
\]
Now, we introduce the functions $\bar{U},\bar{V}:[0,T]\times\mathcal{S}\mapsto\R$ defined by 
\begin{align*}
\bar{U}(t,x,y,z)&:=\inf_{\pi\in\Pi_{t,T}}\sup_{\alpha\in\mathcal{A}_t}\E\left[F(X^{t,x,\alpha}_{\pi[\alpha]}, Y^{t,x,y,\alpha}_{\pi[\alpha]}, Z^{t,x,y,z,\alpha}_{\pi[\alpha]}) \right]=\inf_{\pi\in\Pi_{t,T}}\sup_{\alpha\in\mathcal{A}_t}\E\left[F(\bX^{t,\bx,\alpha}_{\pi[\alpha]})\right],\\
\bar{V}(t,x,y,z)&:=\sup_{\alpha\in\mathcal{A}_t}\inf_{\tau\in\mathcal{T}^t_{t,T}}\E\left[F(X^{t,x,\alpha}_\tau, Y^{t,x,y,\alpha}_\tau, Z^{t,x,y,z,\alpha}_\tau) \right]= \sup_{\alpha\in\mathcal{A}_t}\inf_{\tau\in\mathcal{T}^t_{t,T}}\E[F(\bX^{t,\bx,\alpha}_\tau)].
\end{align*}
Given $\tau\in\T_{t,T}$, consider the function
\begin{equation}\label{def J}
J(t,\bx;\alpha,\tau):=\E[F(\bX^{t,\bx,\alpha}_\tau)].
\end{equation}
Observing that $F(\bX^{t,\bx,\alpha}_\tau)=z+yF(\bX^{t,x,1,0,\alpha}_\tau)$, we have 
\begin{equation}\label{J = z+yJ}
J(t,\bx;\alpha,\tau)=z+yJ(t,(x,1,0);\alpha,\tau), 
\end{equation}
which in particular implies
\begin{equation}\label{V bV}
\bar{U}(t,x,y,z)=z+yU(t,x)\ \ \ \bar{V}(t,x,y,z)=z+yV(t,x).
\end{equation}
Thus, we can express the value functions $U$ and $V$ as 
\[
U(t,x)=\inf_{\pi\in\Pi_{t,T}}\sup_{\alpha\in\A_t}J(t,(x,1,0);\alpha,\pi[\alpha]),\ \ \ V(t,x)=\sup_{\alpha\in\A_t}\inf_{\tau\in\T^t_{t,T}}J(t,(x,1,0);\alpha,\tau).
\] 

The following result will be useful throughout this paper.

\begin{lem}\label{lem:E = J}
Fix $(t,\bx)\in[0,T]\times\mathcal{S}$ and $\alpha\in\A$. 
For any $\theta\in\T_{t,T}$ and $\tau\in\T_{\theta,T}$, we have
\begin{equation*}
\E[F(\bX^{t,\bx,\alpha}_\tau)  \mid \mathcal{F}_\theta](\omega)=J\left(\theta(\omega),\bX^{t,\bx,\alpha}_{\theta}(\omega);\alpha^{\theta,\omega},\tau^{\theta,\omega}\right),\ \hbox{for}\ \overline{\P}\hbox{-a.e.}\ \omega\in\Omega.
\end{equation*}
\end{lem}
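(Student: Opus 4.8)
The plan is to establish the identity by first reducing it to a statement about the three individual components of $\bX$ — i.e., about $X^{t,x,\alpha}$, $Y^{t,x,y,\alpha}$, and $Z^{t,x,y,z,\alpha}$ evaluated at $\tau$ — and then applying the shift/flow machinery developed in Section~\ref{sec:prelim}. The conditional expectation $\E[F(\bX^{t,\bx,\alpha}_\tau)\mid\F_\theta]$ is a regular-conditional-probability computation: since $\F^\theta_T$ quantities behave like the original ones up to $\overline{\P}$-null sets under the shift $\omega'\mapsto\omega\otimes_\theta\phi_\theta(\omega')$ (Proposition~\ref{prop:xi indep. of F_t}, extended to controls by Proposition~\ref{prop:alpha^t,w=alpha}, and to stopping times by Proposition~\ref{lem: tau a stopping time a.s.}), I expect to use the standard freezing lemma: for $\F_\theta$-measurable $\omega$-data and an $\F^\theta_T$-measurable integrand, $\E[\cdot\mid\F_\theta](\omega)$ equals the unconditional expectation of the shifted integrand with the frozen data plugged in.

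The key steps, in order, are as follows. First I would write $F(\bX^{t,\bx,\alpha}_\tau)=Z^{t,x,y,z,\alpha}_\tau + Y^{t,x,y,\alpha}_\tau\, g(X^{t,x,\alpha}_\tau)$ and recall the flow property (Remark~\ref{rem:flow property}(ii)): for $\overline{\P}$-a.e.\ $\omega$, $X^{t,x,\alpha}_r(\omega\otimes_\theta\phi_\theta(\omega')) = X^{\theta(\omega),X^{t,x,\alpha}_\theta(\omega),\overline{\alpha^{\theta,\omega}}}_r(\omega')$ for $r\in[\theta(\omega),T]$ and $\overline{\P}$-a.e.\ $\omega'$; similarly for $Y$ (which solves a linear ODE driven by $c(\cdot,X)$) and for $Z$ (which is a time-integral of $Yf(\cdot,X)$), so that $\bX^{t,\bx,\alpha}_r(\omega\otimes_\theta\phi_\theta(\omega'))=\bX^{\theta(\omega),\bX^{t,\bx,\alpha}_\theta(\omega),\alpha^{\theta,\omega}}_r(\omega')$ for $r\ge\theta(\omega)$. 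Second, since $\tau\in\T_{\theta,T}$, by Proposition~\ref{lem: tau a stopping time a.s.} the shifted time $\tau^{\theta,\omega}$ lies in $\T^{\theta(\omega)}_{\theta(\omega),T}$ for a.e.\ $\omega$, so $\tau^{\theta,\omega}(\omega')\ge\theta(\omega)$ and the flow identity applies at $r=\tau^{\theta,\omega}(\omega')$; hence $F(\bX^{t,\bx,\alpha}_\tau)^{\theta,\omega}(\omega') = F(\bX^{\theta(\omega),\bX^{t,\bx,\alpha}_\theta(\omega),\alpha^{\theta,\omega}}_{\tau^{\theta,\omega}}(\omega'))$ for a.e.\ $\omega'$. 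Third, apply the freezing/conditioning lemma: $\E[F(\bX^{t,\bx,\alpha}_\tau)\mid\F_\theta](\omega)=\E[F(\bX^{t,\bx,\alpha}_\tau)^{\theta,\omega}]$ for a.e.\ $\omega$ — this uses that $\bX^{t,\bx,\alpha}_\theta$ is $\F_\theta$-measurable while the post-$\theta$ increment is $\F^\theta_T$-measurable and independent of $\F_\theta$ (Proposition~\ref{prop:xi indep. of F_t}(i)). Combining, $\E[F(\bX^{t,\bx,\alpha}_\tau)\mid\F_\theta](\omega)=\E[F(\bX^{\theta(\omega),\bX^{t,\bx,\alpha}_\theta(\omega),\alpha^{\theta,\omega}}_{\tau^{\theta,\omega}})]$, which by the definition \eqref{def J} of $J$ is exactly $J(\theta(\omega),\bX^{t,\bx,\alpha}_\theta(\omega);\alpha^{\theta,\omega},\tau^{\theta,\omega})$.

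The main obstacle I anticipate is the bookkeeping needed to justify the freezing step rigorously when $\theta$ is a general stopping time rather than a deterministic time: one must pass through a regular conditional probability argument, handle the measurability of the map $\omega\mapsto\big(\theta(\omega),\bX^{t,\bx,\alpha}_\theta(\omega),\alpha^{\theta,\omega},\tau^{\theta,\omega}\big)$, and verify that the various "$\overline{\P}$-a.e.\ $\omega$, $\overline{\P}$-a.e.\ $\omega'$" exception sets from Remark~\ref{rem:flow property}(ii), Proposition~\ref{lem: tau a stopping time a.s.}, and Proposition~\ref{prop:alpha^t,w=alpha} can be glued into a single $\overline{\P}$-null set in $\omega$ (e.g.\ via Fubini on $\Omega\times\Omega$ under the product-type measure induced by the concatenation map). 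This is technical but standard; the integrability needed for the conditional expectation to make sense is already supplied by \eqref{dominated} together with \eqref{est 1}. I would relegate the detailed null-set gluing to an appendix, mirroring the treatment of Propositions~\ref{prop:xi indep. of F_t}, \ref{lem: tau a stopping time a.s.}, and \ref{prop:alpha^t,w=alpha}, and keep the main-text proof at the level of the three displayed identities above.
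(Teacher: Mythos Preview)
Your proposal is correct and follows essentially the same route as the paper. The paper's proof is just two lines: apply Proposition~\ref{prop:strong Markov 2}(ii) (the identity $\E[\xi\mid\F_\theta](\omega)=\E[\xi^{\theta,\omega}]$ for general $\theta\in\T$, proved in the appendix via r.c.p.d.) to $\xi=F(\bX^{t,\bx,\alpha}_\tau)$, then invoke the flow property Remark~\ref{rem:flow property}(ii) to rewrite $\bX^{t,\bx,\alpha}_\tau(\omega\otimes_\theta\phi_\theta(\omega'))$ as $\bX^{\theta(\omega),\bX^{t,\bx,\alpha}_\theta(\omega),\alpha^{\theta,\omega}}_{\tau^{\theta,\omega}}(\omega')$ and recognize $J$. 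One small correction: the conditioning step for a random $\theta$ does not follow from Proposition~\ref{prop:xi indep. of F_t}(i) (which treats only deterministic $t$); it is precisely Proposition~\ref{prop:strong Markov 2}(ii), already established in the appendix, so you need not redo the r.c.p.d.\ bookkeeping you flag as the main obstacle.
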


\begin{proof}
See Appendix~\ref{subsec:E = J}.
\end{proof}

\section{Supersolution Property of $V$}\label{sec:super}
In this section, we will first study the following two functions
\begin{equation}\label{G}
G^\alpha(s,\bx):=\inf_{\tau\in\mathcal{T}^s_{s,T}}J(s,\bx;\alpha,\tau),\ \ \ \widetilde{G}^\alpha(s,\bx):=\inf_{\tau\in\T_{s,T}}J(s,\bx;\alpha,\tau),\ \ \ \hbox{for}\ (s,\bx)\in[0,T]\times\mathcal{S},
\end{equation} 
where $\alpha\in\A$ is being fixed. A continuity result of $G^\alpha$ enables us to adapt the arguments in \cite{BT11} to current context. We therefore obtain a weak dynamic programming principle (WDPP) for the function $V$ (Proposition~\ref{prop:WDPP_super}), which in turn leads to the supersolution property of $V$ (Proposition~\ref{prop:vis super}).

\begin{lem}\label{lem:conti} 
Fix $\alpha\in\A$.
\begin{itemize}
\item [(i)]$\widetilde{G}^\alpha$ is continuous on $[0,T]\times\mathcal{S}$.
\item [(ii)] Suppose $\alpha\in\A_t$ for some $t\in[0,T]$. Then $G^\alpha=\widetilde{G}^\alpha$ on $[0,t]\times\mathcal{S}$. As a result, $G^\alpha$ is continuous on $[0,t]\times\mathcal{S}$.
\end{itemize}
\end{lem}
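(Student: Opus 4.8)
The plan is to establish (i) first, and then deduce (ii) as a corollary of (i) together with Proposition~\ref{prop:alpha^t,w=alpha} (the fact that for $\alpha\in\A_t$ the shifted control agrees with $\alpha$, so the state process and hence $J$ only sees the ``post-$s$'' part of $\alpha$ once $s\le t$).

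\textbf{Step 1: $\widetilde G^\alpha$ is USC.}
Fix $(s,\bx)\in[0,T]\times\mathcal S$ and a sequence $(s_n,\bx_n)\to(s,\bx)$. Since $\widetilde G^\alpha(s,\bx)=\inf_{\tau\in\T_{s,T}}J(s,\bx;\alpha,\tau)$, for any $\eps>0$ pick $\tau\in\T_{s,T}$ with $J(s,\bx;\alpha,\tau)\le \widetilde G^\alpha(s,\bx)+\eps$. Now $\tau':=\tau\vee s_n$ lies in $\T_{s_n,T}$, so $\widetilde G^\alpha(s_n,\bx_n)\le J(s_n,\bx_n;\alpha,\tau\vee s_n)$. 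The idea is that $J(s_n,\bx_n;\alpha,\tau\vee s_n)-J(s,\bx;\alpha,\tau)\to 0$: this follows from the definition of $J$ as $\E[F(\bX^{s,\bx,\alpha}_\tau)]$ with $F(x,y,z)=z+yg(x)$, the flow-type continuity estimates \eqref{est 3}, \eqref{est 3'}, and the convergence statements \eqref{Y conti. 1}--\eqref{Y conti. 2} of Lemma~\ref{lem:convergence} applied to the augmented process $\bX$ (the extra components $Y$, $Z$ are handled by the boundedness of $c$ and the polynomial growth \eqref{poly grow} of $f$, $g$, exactly as in Remark~\ref{rem:est}). One must also check $\tau\vee s_n\to\tau$ in an appropriate sense and that $F(\bX^{s_n,\bx_n,\alpha}_{\tau\vee s_n})\to F(\bX^{s,\bx,\alpha}_\tau)$ in $L^1$; uniform integrability comes from the $L^2$-bound in Lemma~\ref{lem:convergence}. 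Taking $\limsup_n$ and then $\eps\downarrow0$ gives $\limsup_n\widetilde G^\alpha(s_n,\bx_n)\le \widetilde G^\alpha(s,\bx)$.

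\textbf{Step 2: $\widetilde G^\alpha$ is LSC.}
For the reverse direction we use that $\T_{s_n,T}\subseteq\T_{s,T}$ once $s_n\ge s$, and for $s_n<s$ we truncate: given $\tau_n\in\T_{s_n,T}$ with $J(s_n,\bx_n;\alpha,\tau_n)\le \widetilde G^\alpha(s_n,\bx_n)+\tfrac1n$, set $\hat\tau_n:=\tau_n\vee s\in\T_{s,T}$, so $\widetilde G^\alpha(s,\bx)\le J(s,\bx;\alpha,\hat\tau_n)$. Then estimate $|J(s,\bx;\alpha,\hat\tau_n)-J(s_n,\bx_n;\alpha,\tau_n)|$; the difference between stopping at $\tau_n$ versus $\tau_n\vee s$ is controlled on the event $\{\tau_n<s\}\subseteq\{s_n\le\tau_n<s\}$, whose contribution vanishes because $|s_n-s|\to0$ and $F(\bX)$ has an $L^1$-dominated modulus of continuity in time (again via \eqref{est 2}, boundedness of $c$, and \eqref{poly grow}). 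This yields $\widetilde G^\alpha(s,\bx)\le \liminf_n \widetilde G^\alpha(s_n,\bx_n)+0$, i.e. lower semicontinuity. Combining Steps 1 and 2 proves (i).

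\textbf{Step 3: proof of (ii).}
Suppose $\alpha\in\A_t$ and fix $s\in[0,t]$. The inclusion $\T^s_{s,T}\subseteq\T_{s,T}$ gives $G^\alpha(s,\bx)\ge\widetilde G^\alpha(s,\bx)$. For the reverse inequality, take any $\tau\in\T_{s,T}$; one must produce $\tilde\tau\in\T^s_{s,T}$ with $J(s,\bx;\alpha,\tilde\tau)\le J(s,\bx;\alpha,\tau)$ up to $\eps$. Since $\alpha\in\A_t$ does not depend on $\F_s$ for $s\le t$ (it is $\mathbb F^t$-progressive, and $\F^t_r=\{\emptyset,\Omega\}$ for $r\le t$), the state process $\bX^{s,\bx,\alpha}$ over $[s,T]$ is adapted to the smaller filtration $\mathbb F^s$ in the relevant sense, so the optimal stopping problem for $\E[F(\bX^{s,\bx,\alpha}_\tau)]$ over $\T_{s,T}$ has the same value as over $\T^s_{s,T}$ — this is precisely the content of the Snell-envelope optimal stopping theory used in \eqref{optimal stopping}, where the optimizer $\tau^\alpha=\inf\{r\ge s\mid G(r,\bX^{s,\bx,\alpha}_r)=g(X^{s,x,\alpha}_r)\}$ is already an $\mathbb F^s$-stopping time. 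Hence $G^\alpha=\widetilde G^\alpha$ on $[0,t]\times\mathcal S$, and continuity of $G^\alpha$ there follows from (i).

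\textbf{Main obstacle.}
The delicate point is the uniform-in-$n$ control of $J(s_n,\bx_n;\alpha,\tau\vee s_n)-J(s,\bx;\alpha,\tau)$ in Steps 1--2: one is simultaneously perturbing the initial time $s_n$, the initial point $\bx_n$, \emph{and} the stopping time (through the truncation $\tau\vee s_n$), while $\alpha$ is a fixed but arbitrary progressively measurable control with no continuity in time. The estimates \eqref{est 1}--\eqref{est 3} and Lemma~\ref{lem:convergence} are tailored exactly to absorb the first two perturbations; the third is handled by noting $|\bX^{s,\bx,\alpha}_{\tau\vee s_n}-\bX^{s,\bx,\alpha}_\tau|$ is supported on $\{\tau<s_n\}$ and bounded by $\sup_{\tau\le r\le \tau\vee s_n}|\bX^{s,\bx,\alpha}_r-\bX^{s,\bx,\alpha}_\tau|$, which goes to $0$ in $L^1$ by path continuity plus the $L^2$-uniform-integrability bound. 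Getting all three to cooperate cleanly, and in particular verifying that the various $\tau\vee s_n$ truncations stay admissible and converge, is where the real work lies; everything else is bookkeeping with the Mayer-form reduction \eqref{V bV} and the linearity \eqref{J = z+yJ}.
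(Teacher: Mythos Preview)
Your argument for (i) is essentially correct, but it takes a noticeably longer route than the paper. The paper exploits convention~\eqref{X=x} (namely $X^{s,x,\alpha}_r:=x$ for $r<s$) to observe that
\[
\widetilde G^\alpha(s,\bx)=\inf_{\tau\in\T_{s,T}}J(s,\bx;\alpha,\tau)=\inf_{\tau\in\T_{0,T}}J(s,\bx;\alpha,\tau),
\]
i.e.\ the infimum can be taken over the \emph{fixed} class $\T_{0,T}$, independent of $s$. With this, continuity is a one-line estimate:
\[
\bigl|\widetilde G^\alpha(s_n,\bx_n)-\widetilde G^\alpha(s,\bx)\bigr|
\le \sup_{\tau\in\T_{0,T}}\E\bigl[|F(\bX^{s_n,\bx_n,\alpha}_\tau)-F(\bX^{s,\bx,\alpha}_\tau)|\bigr]
\le \E\Bigl[\sup_{0\le r\le T}|F(\bX^{s_n,\bx_n,\alpha}_r)-F(\bX^{s,\bx,\alpha}_r)|\Bigr]\to 0
\]
by Lemma~\ref{lem:convergence} (after the affine reduction \eqref{J = z+yJ}). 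Your truncation $\tau\vee s_n$ is not wrong, but note that under \eqref{X=x} one has $\bX^{s_n,\bx_n,\alpha}_{\tau\vee s_n}=\bX^{s_n,\bx_n,\alpha}_\tau$ anyway, so the ``third perturbation'' you flag as the main obstacle evaporates entirely. The separate USC/LSC steps are then unnecessary.

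Your argument for (ii) has a genuine gap. You write that the equality $\inf_{\tau\in\T_{s,T}}=\inf_{\tau\in\T^s_{s,T}}$ ``is precisely the content of the Snell-envelope optimal stopping theory used in \eqref{optimal stopping}''. It is not: \eqref{optimal stopping} only asserts that the infimum over $\T^t_{t,T}$ is \emph{attained} at a specific $\tau^\alpha$; it says nothing about the infimum over the larger class $\T_{s,T}$, and there is no general principle that optimal stopping values agree across nested filtrations without further structure. The paper instead proves the nontrivial inequality $\widetilde G^\alpha(s,\bx)\ge G^\alpha(s,\bx)$ by a conditioning argument: for $\tau\in\T_{s,T}$, Lemma~\ref{lem:E = J} with $\theta=s$ gives
\[
J(s,\bx;\alpha,\tau)=\E\bigl[J(s,\bx;\alpha^{s,\omega},\tau^{s,\omega})\bigr]=\E\bigl[J(s,\bx;\alpha,\tau^{s,\omega})\bigr]\ge \inf_{\tau'\in\T^s_{s,T}}J(s,\bx;\alpha,\tau'),
\]
where $\alpha^{s,\omega}$ is replaced by $\alpha$ via Proposition~\ref{prop:alpha^t,w=alpha} (valid since $\alpha\in\A_t\subseteq\A_s$ for $s\le t$), and $\tau^{s,\omega}\in\T^s_{s,T}$ by Proposition~\ref{lem: tau a stopping time a.s.}. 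This is exactly the ``randomized stopping times do not help when the extra information is independent'' argument, made rigorous through the shift machinery of Section~\ref{sec:prelim}; your appeal to \eqref{optimal stopping} skips it.
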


\begin{proof}
{\bf (i)} For any $s\in[0,T]$ and $\bx=(x,y,z)\in\mathcal{S}$, observe from \eqref{J = z+yJ} that $\widetilde{G}^\alpha(s,\bx)=z+y\widetilde{G}^{\alpha}(s,(x,1,0))$. Thus, it is enough to prove that $\widetilde{G}^{\alpha}(s,(x,1,0))$ is continuous on $[0,T]\times\R^d$. Also note that under \eqref{X=x}, we have 
\[
\widetilde{G}^{\alpha}(s,\bx)=\inf_{\tau\in\T_{s,T}}J(s,\bx;\alpha,\tau)=\inf_{\tau\in\T_{0,T}}J(s,\bx;\alpha,\tau).
\]
Now, for any $(s,x)\in[0,T]\times\R^d$, take an arbitrary sequence $\{(s_n,x_n)\}_{n\in\N}\subset[0,T]\times\R^d$ such that $(s_n,x_n)\to (s,x)$. Then the continuity of $\widetilde{G}^\alpha(s,(x,1,0))$ can be seen from the following estimation
\begin{align*}
&\left|\widetilde{G}^\alpha(s_n,(x_n,1,0))-\widetilde{G}^\alpha(s,(x,1,0))\right| = \bigg|\inf_{\ms \tau\in\T_{0,T}}\E[F(\bX^{s_n,x_n,1,0,\alpha}_\tau)]-\inf_{\ms \tau\in\T_{0,T}}\E[F(\bX^{s,x,1,0,\alpha}_\tau)]\bigg|\\
&\hspace{0.2in}\le \sup_{\ms \tau\in\T_{0,T}}\E\left[\left|F(\bX^{s_n,x_n,1,0,\alpha}_\tau)-F(\bX^{s,x,1,0,\alpha}_\tau)\right|\right] \le\E\bigg[\sup_{\ms 0\le r\le T}\left|F(\bX^{s_n,x_n,1,0,\alpha}_r)-F(\bX^{s,x,1,0,\alpha}_r)\right|\bigg]\to 0,
\end{align*} 
where the convergence follows from Lemma~\ref{lem:convergence}.

{\bf (ii)} Suppose $\alpha\in\A_t$ for some $t\in[0,T]$. For any $(s,\bx)\in[0,t]\times\mathcal{S}$ and $\tau\in\mathcal{T}_{s,T}$, by taking $\theta=s$ in Lemma~\ref{lem:E = J}, we have
\begin{equation}\label{J > infJ}
J(s,\bx;\alpha,\tau)= \E\left[\E[F(\bX^{s,\bx,\alpha}_{\tau})\mid\F_s](\omega)\right] = \E\left[J(s,\bx;\alpha,\tau^{s,\omega})\right]
\ge\inf_{\tau\in\mathcal{T}^s_{s,T}}J(s,\bx;\alpha,\tau),
\end{equation}
where in the second equality we replace $\alpha^{s,\omega}$ by $\alpha$, thanks to Proposition~\ref{prop:alpha^t,w=alpha}. We then conclude
\begin{equation}\label{inf=inf}
\inf_{\tau\in\T_{s,T}}J(s,\bx;\alpha,\tau) = \inf_{\tau\in\T^s_{s,T}}J(s,\bx;\alpha,\tau),
\end{equation}
as the ``$\le$'' relation is trivial. That is, $\widetilde{G}^\alpha(s,\bx)=G^\alpha(s,\bx)$.
\end{proof}

Now, we want to modify the arguments in the proof of \cite[Theorem 3.5]{BT11} to get a weak dynamic programming principle for $V$. Given $w:[0,T]\times\R^d\mapsto\R$, we mimic the relation between $V$ and $\bar{V}$ in \eqref{V bV} and define $\bar{w}:[0,T]\times\mathcal{S}\mapsto\R$ by
\begin{equation}\label{def barphi}
\bar{w}(t,x,y,z):=z+y w(t,x),\ (t,x,y,z)\in[0,T]\times\mathcal{S}.
\end{equation}

\begin{prop}\label{prop:WDPP_super}
Fix $(t,\bx)\in[0,T]\times\mathcal{S}$ and $\varepsilon > 0$. Take arbitrary $\alpha\in\mathcal{A}_t$,  $\theta\in\mathcal{T}^t_{t,T}$ and $\varphi\in \operatorname{USC}([0,T]\times\mathbb{R}^d)$ with $\varphi\le V$. We have the following:
\begin{itemize}
\item[(i)] $\mathbb{E}[\bar{\varphi}^+(\theta,\bX^{t,\bx,\alpha}_\theta)]<\infty$;
\item[(ii)] If, moreover, $\mathbb{E}[\bar{\varphi}^-(\theta,\bX^{t,\bx,\alpha}_\theta)]<\infty$, then there exists $\alpha^*\in\mathcal{A}_t$ with $\alpha^*_s=\alpha_s$ for $s\in[0,\theta)$ such that  
\begin{equation*}
\mathbb{E}[F(\bX^{t,\bx,\alpha^*}_\tau)] \ge \mathbb{E}[Y^{t,x,y,\alpha}_{\tau\wedge\theta}\varphi(\tau\wedge\theta,X^{t,x,\alpha}_{\tau\wedge\theta})+Z^{t,x,y,z,\alpha}_{\tau\wedge\theta}]-4\varepsilon,\ \ \forall \tau\in\mathcal{T}^t_{t,T}.
\end{equation*}
\end{itemize}
\end{prop}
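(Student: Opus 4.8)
The plan is to follow the skeleton of the proof of \cite[Theorem 3.5]{BT11}, translating each step into the controller-and-stopper setting, with the running/terminal cost bundled into the Mayer form $F$ through the auxiliary processes $Y,Z$. Part (i) is the easy half: since $0\le\bar\varphi^+(\theta,\bX^{t,\bx,\alpha}_\theta)=\big(Z^{t,x,y,z,\alpha}_\theta+Y^{t,x,y,\alpha}_\theta\varphi(\theta,X^{t,x,\alpha}_\theta)\big)^+$ and $\varphi\le V$, I would bound $\bar\varphi^+$ above by $\bar V^+$ and then, using \eqref{V bV} together with the nonnegativity of $f,g$ and the representation $V(\cdot)=\sup_\alpha G^\alpha(\cdot)$, dominate $\bar V(\theta,\bX^{t,\bx,\alpha}_\theta)$ by a quantity of the form $\E[\,\cdot\mid\F_\theta]$ of the type appearing in \eqref{dominated}; the estimate \eqref{est 1} and the polynomial growth \eqref{poly grow} then give integrability. (An even quicker route: $\varphi\in\operatorname{USC}$ with polynomial growth inherited from $V$ via \eqref{poly grow}, \eqref{est 1}, so $\bar\varphi^+(\theta,\bX_\theta)$ is dominated by a polynomial in $\sup_r|X_r|$ times $Y$ plus $Z$, all in $L^1$.)

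For part (ii), I would fix $\eps>0$ and build $\alpha^*$ by a measurable-selection/pasting argument. The idea: for each $\omega$, at the point $(\theta(\omega),\bX^{t,\bx,\alpha}_\theta(\omega))$ I want a control defined on the horizon starting at $\theta(\omega)$ that is $\eps$-optimal for the inner problem defining $V$, i.e. whose associated $\inf_\tau J$ nearly attains $V(\theta(\omega),X^{t,x,\alpha}_\theta(\omega))\ge\varphi(\theta(\omega),X^{t,x,\alpha}_\theta(\omega))$. Because $\varphi$ is only USC, I would first cover $[0,T]\times\R^d$ by countably many balls on which $\varphi$ oscillates by less than $\eps$ and $G^\alpha$ (continuous, by Lemma~\ref{lem:conti}(ii), on the relevant time-slab) varies by less than $\eps$ — exactly as in \cite{BT11} — pick a representative point and a fixed $\eps$-optimal control $\alpha^{(i)}\in\A_{t_i}$ in each ball, and then define $\alpha^*$ to equal $\alpha$ on $[0,\theta)$ and, on $[\theta,T]$, to equal the $\alpha^{(i)}$ corresponding to the ball containing $(\theta,X^{t,x,\alpha}_\theta)$, concatenated at $\theta$. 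Progressive measurability of $\alpha^*$ follows since $\theta$ is an $\F^t$-stopping time and the selection is piecewise-constant in the index $i$, and $\alpha^*\in\A_t$ because $\alpha\in\A_t$ and each $\alpha^{(i)}$ is adapted to the shifted filtration; the concatenation formalism is exactly that of Subsection~\ref{subsec:setup} and Remark~\ref{rem:flow property}(ii).

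The estimate itself then comes from conditioning at $\tau\wedge\theta$ and using the flow property. Writing $F(\bX^{t,\bx,\alpha^*}_\tau)=Z^{t,x,y,z,\alpha}_{\tau\wedge\theta}+Y^{t,x,y,\alpha}_{\tau\wedge\theta}F(\bX^{\tau\wedge\theta,\bX',1,0,\alpha^*}_\tau)$ with $\bX'=\bX^{t,x,1,0,\alpha}_{\tau\wedge\theta}$ (using \eqref{J = z+yJ}, the linearity of $Z$ and $Y$, and that $\alpha^*=\alpha$ before $\theta$), and then taking $\E[\cdot\mid\F_{\tau\wedge\theta}]$ via Lemma~\ref{lem:E = J}, I get $\E[F(\bX^{t,\bx,\alpha^*}_\tau)]=\E\big[Z^{t,x,y,z,\alpha}_{\tau\wedge\theta}+Y^{t,x,y,\alpha}_{\tau\wedge\theta}\,J((\tau\wedge\theta)(\omega),\bX'(\omega);(\alpha^*)^{\tau\wedge\theta,\omega},\tau^{\tau\wedge\theta,\omega})\big]$. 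On $\{\tau<\theta\}$ the inner term is $F(\bX')$ itself (trivially $\ge G^\alpha(\tau,\bX')$), while on $\{\tau\ge\theta\}$ it is bounded below by $\inf_{\tau'}J(\theta,\bX';(\alpha^*)^{\theta,\omega},\tau')=G^{(\alpha^{(i)})}(\theta,\bX')\ge V(\theta,X^{t,x,\alpha}_\theta)-2\eps\ge\varphi(\theta,X^{t,x,\alpha}_\theta)-2\eps$ by the ball construction and Lemma~\ref{lem:conti}(ii). Reassembling and absorbing the constants into $4\eps$ gives the claim. The main obstacle I anticipate is the bookkeeping in the measurable selection: ensuring that $\alpha^*$ is genuinely $\F^t$-progressively measurable while being assembled from countably many shifted controls $\alpha^{(i)}$ on random time intervals, and that the shifted objects $(\alpha^*)^{\theta,\omega}$, $\tau^{\theta,\omega}$ in Lemma~\ref{lem:E = J} behave as intended $\overline\P$-a.s. — this is the place where the careful filtration setup of Section~\ref{sec:prelim} (Propositions~\ref{prop:xi indep. of F_t}, \ref{lem: tau a stopping time a.s.}, \ref{prop:alpha^t,w=alpha}) has to be invoked with care, rather than any deep new estimate.
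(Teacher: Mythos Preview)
Your proposal is essentially the paper's proof: choose $\eps$-optimal controls at representative points, cover $(0,T]\times\mathcal S$ by neighborhoods on which both $\bar\varphi$ (USC) and $G^{\alpha^{(i)}}$ (continuous on the appropriate slab, by Lemma~\ref{lem:conti}(ii)) vary by at most $\eps$, paste the controls after $\theta$, and condition via Lemma~\ref{lem:E = J}. Two points of bookkeeping are worth flagging. First, because $\alpha^{(s,\eta),\eps}\in\mathcal A_s$, the continuity of $G^{\alpha^{(s,\eta),\eps}}$ is only available on $[0,s]\times\mathcal S$; the paper therefore uses half-closed ``balls'' $B(s,\eta;r)=\{t'\in(s-r,s]\}\times\{|x'-\eta|<r\}$ and the Lindel\"of property of the upper-limit topology to extract a countable subcover---your phrase ``on the relevant time-slab'' is exactly this, but it is the one nontrivial topological step. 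Second, the paper does a \emph{finite} paste over $A_0,\dots,A_{n^*}$ (keeping $\alpha$ on $(A^{n^*})^c$) and then uses $\E[\bar\varphi^+(\theta,\bX_\theta)]<\infty$ together with the hypothesis $\E[\bar\varphi^-(\theta,\bX_\theta)]<\infty$ to show that the missing tail costs at most an extra $\eps$; this is where the fourth $\eps$ comes from and where the integrability assumption in (ii) is actually invoked. Your sketch jumps directly to ``absorbing into $4\eps$'' without this step, and your chain $G^{\alpha^{(i)}}\ge V-2\eps\ge\varphi-2\eps$ drops one $\eps$ (it should be $\ge\bar\varphi-3\eps$: one from control selection, one from the $G$-oscillation, one from the $\bar\varphi$-oscillation).
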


\begin{proof}
(i) First, observe that for any $\bx=(x,y,z)\in\mathcal{S}$, $\bar{\varphi}(t,\bx) = y\varphi(t,x)+z \le yV(t,x)+z \le yg(x)+z$, which implies $\bar{\varphi}^+(t,\bx)\le yg(x)+z$. It follows that 
\begin{eqnarray*}
\bar{\varphi}^+(\theta,\bX^{t,\bx,\alpha}_\theta) &\le& Y^{t,x,y,\alpha}_{\theta}g(X^{t,x,\alpha}_{\theta})+Z^{t,x,y,z,\alpha}_{\theta} \\
& \le &
Y^{t,x,y,\alpha}_{\theta}g(X^{t,x,\alpha}_{\theta}) + z + \int_{t}^{\theta}Y^{t,x,y,\alpha}_s f(s,X_{s}^{t,x,\alpha},\alpha_s)ds, 
\end{eqnarray*}
the right-hand-side is integrable as a result of \eqref{dominated}.

(ii)
For each $(s,\eta)\in[0,T]\times\mathcal{S}$, by the definition of $\bar{V}$, there exists $\alpha^{(s,\eta),\varepsilon}\in\mathcal{A}_s$ such that 
\begin{equation}\label{inf>V}
\inf_{\tau\in\mathcal{T}^s_{s,T}}J(s,\eta;\alpha^{(s,\eta),\varepsilon},\tau) \ge \bar{V}(s,\eta)-\varepsilon.
\end{equation}  
Note that $\varphi\in \operatorname{USC}([0,T]\times\mathbb{R}^d)$ implies $\bar{\varphi}\in \operatorname{USC}([0,T]\times\mathcal{S})$. Then by the upper semicontinuity of $\bar{\varphi}$ on $[0,T]\times\mathcal{S}$ and the lower semicontinuity of $G^{\alpha^{(s,\eta),\varepsilon}}$ on $[0,s]\times\mathcal{S}$ (from Lemma~\ref{lem:conti} (ii)), there must exist $r^{(s,\eta)}>0$ such that
\[
\bar{\varphi}(t',x')-\bar{\varphi}(s,\eta) \le \varepsilon\ \text{and}\ G^{\alpha^{(s,\eta),\varepsilon}}(s,\eta)-G^{\alpha^{(s,\eta),\varepsilon}}(t',x') \le \varepsilon\ \hbox{for all}\ (t',x')\in B(s,\eta;r^{(s,\eta)}),
\]
where $B(s,\eta;r)=\{(t',x')\in[0,T]\times\mathcal{S}\ |\ t'\in(s-r,s], |x'-\eta|<r\}$, defined as in \eqref{defn ball}. It follows that if $(t',x')\in B(s,\eta;r^{(s,\eta)})$, we have
\[
G^{\alpha^{(s,\eta),\varepsilon}}(t',x') \ge G^{\alpha^{(s,\eta),\varepsilon}}(s,\eta)-\varepsilon \ge \bar{V}(s,\eta)-2\varepsilon \ge \bar{\varphi}(s,\eta)-2\varepsilon \ge \bar{\varphi}(t',x')-3\varepsilon,
\]
where the second inequality is due to \eqref{inf>V}. Here, we do not use the usual topology induced by balls of the form $B_{r}(s,\eta)=\{(t',x')\in[0,T]\times\mathcal{S}\ |\ |t'-s|<r, |x'-\eta|<r\}$; instead, for the time variable, we consider the topology induced by half-closed intervals on $[0,T]$, i.e. the so-called upper limit topology (see e.g. \cite[Ex.4 on p.66]{Dugundji-book-66}). Note from \cite[Ex.3 on p.174]{Dugundji-book-66} and \cite[Ex.3 on p.192]{Munkres-book-75} that $(0,T]$ is a Lindel\"{o}f space under this topology. It follows that, under this setting, $\{B(s,\eta;r)\ |\ (s,\eta)\in[0,T]\times\mathcal{S}, 0< r\le r^{(s,\eta)}\}$ forms an open covering of $(0,T]\times\mathcal{S}$, and there exists a countable subcovering $\{B(t_i,x_i;r_i)\}_{i\in\mathbb{N}}$ of $(0,T]\times\mathcal{S}$. Now set $A_0 := \{T\}\times\mathcal{S}$, $C_{-1}:=\emptyset$ and define for all $i\in\mathbb{N}\cup\{0\}$
\[
A_{i+1}:=B(t_{i+1},x_{i+1};r_{i+1})\setminus C_i,\ \text{where}\ C_i:=C_{i-1}\cup A_i.
\]
Under this construction, we have 
\begin{equation}\label{Ai structure}
(\theta,\bX^{t,\bx,\alpha}_\theta)\in\cup_{i\in\mathbb{N}\cup\{0\}}A_i\ \P\hbox{-a.s.},\ A_i\cap A_j=\emptyset\ \text{for}\ i\neq j,\ \text{and}\  G^{\alpha^{i,\varepsilon}}(t',x') \ge \bar{\varphi}(t',x')-3\varepsilon\ \text{for}\ (t',x')\in A_i,
\end{equation}
where $\alpha^{i,\varepsilon}:=\alpha^{(t_i,x_i),\varepsilon}$.

For any $n\in\mathbb{N}$, set $A^n:=\cup_{0\le i\le n}A_i$ and define
\[
\alpha^{\eps, n} := \alpha1_{[0,\theta)} + \left(\alpha 1_{(A^n)^c}(\theta,\bX^{t,\bx,\alpha}_\theta)+\sum_{i=0}^{n}\alpha^{i,\eps}1_{A_i}(\theta,\bX^{t,\bx,\alpha}_\theta)\right)1_{[\theta,T]}\in\mathcal{A}_t.
\]
Note that $\alpha^{\varepsilon, n}_s=\alpha_s$ for $s\in[0,\theta)$. Whenever $\omega\in \{(\theta,\bX^{t,\bx,\alpha}_\theta)\in A_i\}$, observe that 
$(\alpha^{\eps,n})^{\theta,\omega}(\omega')=\alpha^{\eps,n}\left(\omega\otimes_\theta\phi_\theta(\omega')\right)=\alpha^{i,\eps}\left(\omega\otimes_\theta\phi_\theta(\omega')\right)=(\alpha^{i,\eps})^{\theta,\omega}(\omega')$; also, we have $\alpha^{i,\eps}\in\A_{\theta(\omega)}$, as $\alpha^{i,\eps}\in\A_{t_i}$ and $\theta(\omega)\le t_i$ on $A_i$. We then deduce from Lemma~\ref{lem:E = J}, Proposition~\ref{prop:alpha^t,w=alpha}, and \eqref{Ai structure} that for $\overline{\P}$-a.e. $\omega\in\Omega$
\begin{equation}\label{>phi}
\begin{split}
\mathbb{E}[F(\bX^{t,\bx,\alpha^{\varepsilon,n}}_\tau)1_{\{\tau\ge\theta\}}|\mathcal{F}_\theta]1_{A^n}(\theta,\bX^{t,\bx,\alpha}_\theta) &= 
1_{\{\tau\ge\theta\}}\sum_{i=0}^n J(\theta,\bX^{t,\bx,\alpha}_\theta;\alpha^{i,\varepsilon},\tau^{\theta,\omega})1_{A_i}(\theta,\bX^{t,\bx,\alpha}_\theta)\\
&\ge
1_{\{\tau\ge\theta\}}\sum_{i=0}^n G^{\alpha^{i,\varepsilon}}(\theta,\bX^{t,\bx,\alpha}_\theta)1_{A_i}(\theta,\bX^{t,\bx,\alpha}_\theta)\\
&\ge
1_{\{\tau\ge\theta\}} [\bar{\varphi}(\theta,\bX^{t,\bx,\alpha}_\theta)-3\varepsilon]1_{A^n}(\theta,\bX^{t,\bx,\alpha}_\theta).
\end{split}
\end{equation}
Hence, we have
\begin{equation}\label{>phi 2}
\begin{split}
\mathbb{E}[F(\bX^{t,\bx,\alpha^{\varepsilon,n}}_\tau)] &=
\mathbb{E}[F(\bX^{t,\bx,\alpha}_\tau)1_{\{\tau<\theta\}}]+\mathbb{E}[F(\bX^{t,\bx,\alpha^{\varepsilon,n}}_\tau)1_{\{\tau\ge\theta\}}]\\
&=
\mathbb{E}[F(\bX^{t,\bx,\alpha}_\tau)1_{\{\tau<\theta\}}]+\mathbb{E}\left[\mathbb{E}[F(\bX^{t,\bx,\alpha^{\varepsilon,n}}_\tau)1_{\{\tau\ge\theta\}}|\mathcal{F}_\theta]1_{A^n}(\theta,\bX^{t,\bx,\alpha}_\theta)\right]\\
&\hspace{0.2 in} +  
\mathbb{E}\left[\mathbb{E}[F(\bX^{t,\bx,\alpha^{\varepsilon,n}}_\tau)1_{\{\tau\ge\theta\}}|\mathcal{F}_\theta]1_{(A^n)^c}(\theta,\bX^{t,\bx,\alpha}_\theta)\right]\\
&\ge
\mathbb{E}[F(\bX^{t,\bx,\alpha}_\tau)1_{\{\tau<\theta\}}]+\mathbb{E}[1_{\{\tau\ge\theta\}}\bar{\varphi}(\theta,\bX^{t,\bx,\alpha}_\theta)1_{A^n}(\theta,\bX^{t,\bx,\alpha}_\theta)]-3\varepsilon\\
&\ge
\mathbb{E}[1_{\{\tau<\theta\}}\bar{\varphi}(\tau,\bX^{t,\bx,\alpha}_\tau)]+\mathbb{E}[1_{\{\tau\ge\theta\}}\bar{\varphi}(\theta,\bX^{t,\bx,\alpha}_\theta)1_{A^n}(\theta,\bX^{t,\bx,\alpha}_\theta)]-3\varepsilon,
\end{split}
\end{equation}
where the first inequality comes from \eqref{>phi}, and the second inequality is due to the observation that
\begin{eqnarray*}
F(\bX^{t,\bx,\alpha}_\tau) &=& Y^{t,x,y,\alpha}_\tau g(X^{t,x,\alpha}_\tau) + Z^{t,x,y,z,\alpha}_\tau \ge Y^{t,x,y,\alpha}_\tau V(\tau, X^{t,x,\alpha}_\tau) + Z^{t,x,y,z,\alpha}_\tau \\
&\ge&
Y^{t,x,y,\alpha}_\tau \varphi(\tau, X^{t,x,\alpha}_\tau) + Z^{t,x,y,z,\alpha}_\tau.
\end{eqnarray*}
Since $\mathbb{E}[\bar{\varphi}^+(\theta,\bX^{t,\bx,\alpha}_\theta)]<\infty$ (by part (i)), there exists $n^*\in\mathbb{N}$ such that  
\[
\mathbb{E}[\bar{\varphi}^+(\theta,\bX^{t,\bx,\alpha}_\theta)] - \mathbb{E}[\bar{\varphi}^+(\theta,\bX^{t,\bx,\alpha}_\theta)1_{A^{n^*}}(\theta,\bX^{t,\bx,\alpha}_\theta)] < \varepsilon.
\]
We observe the following holds for any $\tau\in\mathcal{T}^t_{t,T}$
\begin{equation}\label{<epsilon}
\begin{split}
&\mathbb{E}[1_{\{\tau\ge\theta\}}\bar{\varphi}^+(\theta,\bX^{t,\bx,\alpha}_\theta)] - \mathbb{E}[1_{\{\tau\ge\theta\}}\bar{\varphi}^+(\theta,\bX^{t,\bx,\alpha}_\theta)1_{A^{n^*}}(\theta,\bX^{t,\bx,\alpha}_\theta)]\\
&\hspace{2in} \leq
\mathbb{E}[\bar{\varphi}^+(\theta,\bX^{t,\bx,\alpha}_\theta)] - \mathbb{E}[\bar{\varphi}^+(\theta,\bX^{t,\bx,\alpha}_\theta)1_{A^{n^*}}(\theta,\bX^{t,\bx,\alpha}_\theta)] < \varepsilon.
\end{split}
\end{equation}
Suppose $\mathbb{E}[\bar{\varphi}^-(\theta,\bX^{t,\bx,\alpha}_\theta)]<\infty$, then we can conclude from \eqref{<epsilon} that for any $\tau\in\mathcal{T}^t_{t,T}$ 
\begin{align}\label{<epsilon 2}
&\mathbb{E}[1_{\{\tau\ge\theta\}}\bar{\varphi}(\theta,\bX^{t,\bx,\alpha}_\theta)] 
= \mathbb{E}[1_{\{\tau\ge\theta\}}\bar{\varphi}^+(\theta,\bX^{t,\bx,\alpha}_\theta)]-\mathbb{E}[1_{\{\tau\ge\theta\}}\bar{\varphi}^-(\theta,\bX^{t,\bx,\alpha}_\theta)]\nonumber\\
&\hspace{0.2in} \le 
\mathbb{E}[1_{\{\tau\ge\theta\}}\bar{\varphi}^+(\theta,\bX^{t,\bx,\alpha}_\theta)1_{A^{n^*}}(\theta,\bX^{t,\bx,\alpha}_\theta)]+\varepsilon-\mathbb{E}[1_{\{\tau\ge\theta\}}\bar{\varphi}^-(\theta,\bX^{t,\bx,\alpha}_\theta)1_{A^{n^*}}(\theta,\bX^{t,\bx,\alpha}_\theta)]\nonumber\\
&\hspace{0.2in} = 
\mathbb{E}[1_{\{\tau\ge\theta\}}\bar{\varphi}(\theta,\bX^{t,\bx,\alpha}_\theta)1_{A^{n^*}}(\theta,\bX^{t,\bx,\alpha}_\theta)]+\varepsilon.
\end{align}
Taking $\alpha^*=\alpha^{\varepsilon,n^*}$, we now conclude from \eqref{>phi 2} and \eqref{<epsilon 2} that
\begin{align*}\label{>-4epsilon}
\mathbb{E}[F(\bX^{t,\bx,\alpha^*}_\tau)] &\ge \mathbb{E}[1_{\{\tau<\theta\}}\bar{\varphi}(\tau,\bX^{t,\bx,\alpha}_\tau)]+\mathbb{E}[1_{\{\tau\ge\theta\}}\bar{\varphi}(\theta,\bX^{t,\bx,\alpha}_\theta)]-4\varepsilon\\
&=
\mathbb{E}[\bar{\varphi}(\tau\wedge\theta,\bX^{t,\bx,\alpha}_{\tau\wedge\theta})]-4\eps\\
&=E[Y^{t,x,y,\alpha}_{\tau\wedge\theta}\varphi(\tau\wedge\theta,X^{t,x,\alpha}_{\tau\wedge\theta})+Z^{t,x,y,z,\alpha}_{\tau\wedge\theta}]-4\varepsilon.
\end{align*}
\end{proof}

We still need the following property of $V$ in order to obtain the supersolution property.

\begin{prop}\label{prop:V=sup tG}
For any $(t,x)\in[0,T]\times\R^d$, $V(t,x)=\sup_{\alpha\in\A}\widetilde{G}^\alpha(t,(x,1,0))$.
\end{prop}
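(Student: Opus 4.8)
The plan is to prove the identity $V(t,x)=\sup_{\alpha\in\A}\widetilde{G}^\alpha(t,(x,1,0))$ by establishing two inequalities, exploiting the fact that $V(t,x)=\sup_{\alpha\in\A_t}G^\alpha(t,(x,1,0))$ (from the definition of $V$ together with the reduction to Mayer form) and that $\widetilde{G}^\alpha$ is defined with the larger stopping-time class $\T_{s,T}\supseteq\T^s_{s,T}$. Since $\widetilde{G}^\alpha$ takes an infimum over a larger class, $\widetilde{G}^\alpha(t,(x,1,0))\le G^\alpha(t,(x,1,0))$ for every $\alpha$; on the other hand, the sup over $\A$ in the statement is over a larger control class than the sup over $\A_t$ appearing in $V$. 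So the two effects pull in opposite directions and neither inequality is automatic.

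First I would prove $V(t,x)\le\sup_{\alpha\in\A}\widetilde{G}^\alpha(t,(x,1,0))$. For any $\alpha\in\A_t$, Lemma~\ref{lem:conti}(ii) gives $G^\alpha=\widetilde{G}^\alpha$ on $[0,t]\times\mathcal{S}$, in particular $G^\alpha(t,(x,1,0))=\widetilde{G}^\alpha(t,(x,1,0))$. Hence
\[
V(t,x)=\sup_{\alpha\in\A_t}G^\alpha(t,(x,1,0))=\sup_{\alpha\in\A_t}\widetilde{G}^\alpha(t,(x,1,0))\le\sup_{\alpha\in\A}\widetilde{G}^\alpha(t,(x,1,0)),
\]
the last step because $\A_t\subseteq\A$.

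For the reverse inequality $\sup_{\alpha\in\A}\widetilde{G}^\alpha(t,(x,1,0))\le V(t,x)$, fix an arbitrary $\alpha\in\A$ and I would show $\widetilde{G}^\alpha(t,(x,1,0))\le V(t,x)$. The idea is that only the restriction of $\alpha$ to $[t,T]$ matters for the dynamics of $\bX^{t,(x,1,0),\alpha}$ (the SDE~\eqref{SDE X} is run on $[t,T]$ from the initial time $t$), while on $[0,t)$ we set $X\equiv x$ by convention~\eqref{X=x}. Concretely, given $\alpha\in\A$, define $\alpha':=\alpha$ on $[t,T]$ and let $\alpha'$ be constant (equal to $\alpha_t$, say) on $[0,t)$ — this $\alpha'$ is still $\mathbb{F}$-progressively measurable, hence in $\A$, and in fact $\alpha'\in\A_t$ in the sense that it does not look ahead; more precisely one checks that $\alpha'$ restricted appropriately defines an element of $\A_t$ and that $\bX^{t,(x,1,0),\alpha}=\bX^{t,(x,1,0),\alpha'}$ $\oP$-a.s., since the solution of~\eqref{SDE X} on $[t,T]$ depends on the control only through its values on $[t,T]$. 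Therefore $J(t,(x,1,0);\alpha,\tau)=J(t,(x,1,0);\alpha',\tau)$ for every $\tau\in\T_{t,T}$, so $\widetilde{G}^\alpha(t,(x,1,0))=\widetilde{G}^{\alpha'}(t,(x,1,0))$, and now applying Lemma~\ref{lem:conti}(ii) to $\alpha'\in\A_t$ gives $\widetilde{G}^{\alpha'}(t,(x,1,0))=G^{\alpha'}(t,(x,1,0))\le\sup_{\beta\in\A_t}G^\beta(t,(x,1,0))=V(t,x)$. Taking the supremum over $\alpha\in\A$ finishes the proof.

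The main obstacle I anticipate is the second inequality, specifically making rigorous the claim that an arbitrary $\alpha\in\A$ can be replaced, without changing the state process on $[t,T]$, by a control lying in $\A_t$, so that Lemma~\ref{lem:conti}(ii) applies. This requires being careful about the precise definitions of $\A$ versus $\A_t$ (progressive measurability with respect to $\mathbb{F}$ versus $\mathbb{F}^t$) and about the convention~\eqref{X=x}; one may need to invoke pathwise uniqueness for~\eqref{SDE X} and the flow property in Remark~\ref{rem:flow property}, or Proposition~\ref{prop:alpha^t,w=alpha}, to justify that altering the control on $[0,t)$ leaves $\bX^{t,(x,1,0),\alpha}_s$ for $s\in[t,T]$ unchanged $\oP$-a.s. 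Once that measurability bookkeeping is handled, everything else is a direct application of Lemma~\ref{lem:conti}(ii) and the set inclusions $\T^s_{s,T}\subseteq\T_{s,T}$ and $\A_t\subseteq\A$.
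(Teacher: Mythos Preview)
Your first inequality is correct and matches the paper exactly.

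The second inequality has a genuine gap. Your construction of $\alpha'$ --- keeping $\alpha$ unchanged on $[t,T]$ and redefining it on $[0,t)$ --- does \emph{not} produce an element of $\A_t$ in general. The class $\A_t$ consists of $\mathbb{F}^t$-progressively measurable processes, and by Definition~\ref{defn:F} the $\sigma$-algebra $\F^t_s$ for $s\ge t$ is generated (modulo null sets) only by the increments $W_r-W_t$, $r\in[t,s]$. A general $\alpha\in\A$ is $\mathbb{F}$-progressive, so $\alpha_s$ for $s\ge t$ may depend nontrivially on $W|_{[0,t]}$; this dependence survives in $\alpha'|_{[t,T]}=\alpha|_{[t,T]}$, so $\alpha'$ is typically not $\mathbb{F}^t$-progressive and Lemma~\ref{lem:conti}(ii) cannot be invoked for it. Your suggested fallback, Proposition~\ref{prop:alpha^t,w=alpha}, does not help either: that proposition already \emph{assumes} $\alpha\in\A_t$, so using it here would be circular.

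The paper resolves this by conditioning rather than by modifying the control. For fixed $\alpha\in\A$ and any $\tau\in\T_{t,T}$, Lemma~\ref{lem:E = J} with $\theta=t$ gives $J(t,\bx;\alpha,\tau)=\E\big[J(t,\bx;\alpha^{t,\omega},\tau^{t,\omega})\big]$, where for $\overline{\P}$-a.e.\ $\omega$ the shifted control $\alpha^{t,\omega}$ \emph{does} lie in $\A_t$ and $\tau^{t,\omega}\in\T^t_{t,T}$ (Proposition~\ref{lem: tau a stopping time a.s.}). Using Proposition~\ref{prop:xi indep. of F_t} to replace $\tau^{t,\omega}$ by $\tau$ when $\tau\in\T^t_{t,T}$, one obtains
\[
\inf_{\tau\in\T_{t,T}}J(t,\bx;\alpha,\tau)=\inf_{\tau\in\T^t_{t,T}}\E\big[J(t,\bx;\alpha^{t,\omega},\tau)\big]\le \sup_{\beta\in\A_t}\inf_{\tau\in\T^t_{t,T}}J(t,\bx;\beta,\tau)=V(t,x),
\]
the inequality coming from $\alpha^{t,\omega}\in\A_t$. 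The point is that one cannot associate to $\alpha$ a \emph{single} control in $\A_t$, but rather an $\omega$-indexed family $\{\alpha^{t,\omega}\}$, and the passage to this family is exactly what the conditioning machinery (Lemma~\ref{lem:E = J} and Propositions~\ref{prop:xi indep. of F_t}--\ref{lem: tau a stopping time a.s.}) is designed to handle.
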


\begin{proof}
Thanks to Lemma~\ref{lem:conti} (ii), we immediately have
\[
V(t,x)=\sup_{\alpha\in\A_t}{G}^\alpha(t,(x,1,0))=\sup_{\alpha\in\A_t}\widetilde{G}^\alpha(t,(x,1,0))\le\sup_{\alpha\in\A}\widetilde{G}^\alpha(t,(x,1,0)).
\]
For the reverse inequality, fix $\alpha\in\A$ and $\bx\in\mathcal{S}$. By a calculation similar to \eqref{J > infJ}, we have $J(t,\bx;\alpha,\tau)=\E[J(t,\bx;\alpha^{t,\omega},\tau^{t,\omega})]$, for any $\tau\in\T_{t,T}$. Observing that $\tau^{t,\omega}\in\T^t_{t,T}$ for all $\tau\in\T_{t,T}$ (by Proposition~\ref{lem: tau a stopping time a.s.}), and that $\E[J(t,\bx;\alpha^{t,\omega},\tau^{t,\omega})]=\E[J(t,\bx;\alpha^{t,\omega},\tau)]$ for all $\tau\in\T^t_{t,T}$ (by Proposition~\ref{prop:xi indep. of F_t}), we obtain
\begin{eqnarray*}
\inf_{\tau\in\T_{t,T}}J(t,\bx;\alpha,\tau)&=&\inf_{\tau\in\T_{t,T}}\E[J(t,\bx;\alpha^{t,\omega},\tau^{t,\omega})]=\inf_{\tau\in\T^t_{t,T}}\E[J(t,\bx;\alpha^{t,\omega},\tau)]\\
&\le& \sup_{\alpha\in\A_t}\inf_{\tau\in\T^t_{t,T}}\E[J(t,\bx;\alpha,\tau)]=\sup_{\alpha\in\A_t}\inf_{\tau\in\T^t_{t,T}}J(t,\bx;\alpha,\tau),
\end{eqnarray*}  
where the inequality is due to the fact that $\alpha^{t,\omega}\in\A_t$. By setting $\bx:=(x,1,0)$ and taking supremum over $\alpha\in\A$, we get $\sup_{\alpha\in\A}\widetilde{G}^\alpha(t,(x,1,0))\le V(t,x)$.
\end{proof}

\begin{coro}\label{coro:V LSC}
$V\in \operatorname{LSC}([0,T]\times\R^d)$.
\end{coro}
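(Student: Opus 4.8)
The plan is to derive the lower semicontinuity of $V$ directly from Proposition~\ref{prop:V=sup tG}, namely the representation $V(t,x) = \sup_{\alpha\in\A}\widetilde{G}^\alpha(t,(x,1,0))$, together with part (i) of Lemma~\ref{lem:conti}, which asserts that for each fixed $\alpha\in\A$ the map $(s,\bx)\mapsto\widetilde{G}^\alpha(s,\bx)$ is continuous on $[0,T]\times\mathcal{S}$. Since a pointwise supremum of a family of continuous functions is lower semicontinuous, the conclusion will be essentially immediate.

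Concretely, first I would fix a convergent sequence $(t_n,x_n)\to(t,x)$ in $[0,T]\times\R^d$ and aim to show $\liminf_{n\to\infty} V(t_n,x_n) \ge V(t,x)$. By Proposition~\ref{prop:V=sup tG}, given $\eps>0$ there exists $\alpha\in\A$ such that $\widetilde{G}^\alpha(t,(x,1,0)) \ge V(t,x) - \eps$. For this fixed $\alpha$, Lemma~\ref{lem:conti}(i) gives $\widetilde{G}^\alpha(t_n,(x_n,1,0)) \to \widetilde{G}^\alpha(t,(x,1,0))$ as $n\to\infty$. Combining this with the trivial bound $V(t_n,x_n) \ge \widetilde{G}^\alpha(t_n,(x_n,1,0))$ (again from Proposition~\ref{prop:V=sup tG}, since $V$ is the supremum over all $\alpha\in\A$), I obtain
\[
\liminf_{n\to\infty} V(t_n,x_n) \ge \lim_{n\to\infty}\widetilde{G}^\alpha(t_n,(x_n,1,0)) = \widetilde{G}^\alpha(t,(x,1,0)) \ge V(t,x) - \eps.
\]
Letting $\eps\downarrow 0$ yields $\liminf_{n\to\infty} V(t_n,x_n) \ge V(t,x)$, which is precisely lower semicontinuity.

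There is essentially no hard part here; the work has already been done in establishing Proposition~\ref{prop:V=sup tG} (which converts the $\sup$-over-$\A_t$ / $\inf$-over-$\T^t_{t,T}$ structure into a clean supremum over all of $\A$) and Lemma~\ref{lem:conti}(i) (the continuity of each $\widetilde{G}^\alpha$, which rests on the flow estimates \eqref{est 3} and the $L^1$-convergence in Lemma~\ref{lem:convergence}). The only point that requires a moment's care is that the representation $V = \sup_{\alpha\in\A}\widetilde{G}^\alpha(\cdot,(\cdot,1,0))$ must indeed hold with $\widetilde{G}$ (not just $G$) so that the continuous—rather than merely lower semicontinuous—version of the value candidate is being used; this is exactly the content of Proposition~\ref{prop:V=sup tG}, so no additional argument is needed. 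Hence the corollary follows in a few lines.
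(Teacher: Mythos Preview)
Your proof is correct and takes essentially the same approach as the paper: both use Proposition~\ref{prop:V=sup tG} to write $V$ as a supremum over $\alpha\in\A$ of the functions $(t,x)\mapsto\widetilde{G}^\alpha(t,(x,1,0))$, which are continuous by Lemma~\ref{lem:conti}(i), and then invoke the fact that a pointwise supremum of continuous functions is lower semicontinuous. You have simply spelled out the standard $\eps$-argument for this last fact, which the paper leaves implicit.
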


\begin{proof}
By Proposition~\ref{prop:V=sup tG} and Lemma~\ref{lem:conti} (i), $V$ is a supremum of a collection of continuous functions defined on $[0,T]\times\R^d$, and thus has to be lower semicontinuous on the same space.
\end{proof}

Now, we are ready to present the main result of this section. Recall that the operator $\overline{H}$ is defined in \eqref{bH}.

\begin{prop}\label{prop:vis super}
The function $V$ is a lower semicontinuous viscosity supersolution to the obstacle problem of a Hamilton-Jacobi-Bellman equation 
\begin{equation}\label{HJB obstacle 1}
\max\left\{ c(t,x)w-\frac{\partial w}{\partial t}+\overline{H}(t,x,D_x w, D^2_x w),\ w-g(x)\right\}=0\ \hbox{on}\ [0,T)\times\R^d,
\end{equation}
and satisfies the polynomial growth condition: there exists $N>0$ such that
\begin{equation}\label{poly grow U,V}
|V(t,x)|\le N(1+|x|^{\bar{p}}),\ \forall (t,x)\in[0,T]\times\R^d.
\end{equation}
\end{prop}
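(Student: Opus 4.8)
The plan is to establish the two assertions separately. The polynomial growth bound \eqref{poly grow U,V} is the easy part: from \eqref{V bV} and \eqref{def J} we have $V(t,x)=\sup_{\alpha\in\A_t}\inf_{\tau\in\T^t_{t,T}}\E[F(\bX^{t,x,1,0,\alpha}_\tau)]$ where $F(\bX^{t,x,1,0,\alpha}_\tau)=Y^{t,x,1,\alpha}_\tau g(X^{t,x,\alpha}_\tau)+Z^{t,x,1,0,\alpha}_\tau$; since $c\ge 0$ we have $0\le Y^{t,x,1,\alpha}\le 1$, and $f,g\ge 0$, so $0\le F(\bX^{t,x,1,0,\alpha}_\tau)\le g(X^{t,x,\alpha}_\tau)+\int_t^T f(s,X^{t,x,\alpha}_s,\alpha_s)\,ds$. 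Applying the polynomial growth condition \eqref{poly grow} and the moment estimate \eqref{est 1} (with $p=\bar p$) bounds the right-hand side in $L^1$ by a constant times $1+|x|^{\bar p}$ uniformly in $\alpha$ and $\tau$; since also $V\ge 0$, this gives \eqref{poly grow U,V}. Lower semicontinuity of $V$ is already Corollary~\ref{coro:V LSC}, so the supersolution claim only requires the viscosity inequality.

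For the viscosity supersolution property, I would follow the template of \cite[Theorem 3.5]{BT11}, now available to us because of the weak dynamic programming principle in Proposition~\ref{prop:WDPP_super}. First, the obstacle inequality $V(t,x)\ge g(x)$: fix $\alpha\in\A_t$ and take $\tau\equiv t$ in the inner infimum; then $\inf_\tau\E[F(\bX^{t,x,1,0,\alpha}_\tau)]\le F(\bX^{t,x,1,0,\alpha}_t)=g(x)$ is the wrong direction, so instead one notes every admissible $\tau$ satisfies $F(\bX^{t,x,1,0,\alpha}_\tau)\ge Y_\tau g(X_\tau)\ge 0$ and, more to the point, uses that $G^\alpha(t,(x,1,0))\ge g(x)$ fails in general — rather, one argues directly that the Snell-envelope characterization \eqref{optimal stopping} gives $G^\alpha(t,(x,1,0))\le g(x)$ so in fact we need the reverse; the clean route is: for the obstacle term, observe $V(t,x)\le g(x)$ is false too — instead the correct statement is that $V\le g$ need not hold, but the \emph{supersolution} inequality $\max\{\cdots,w-g\}\ge 0$ only needs $V(t,x)\ge g(x)$ OR the PDE inequality, whichever; actually the standard convention is that a supersolution satisfies $\max\{\cdots\}\le 0$ at test points from below is wrong — let me restate: $V$ is a supersolution means at any $(t_0,x_0)$ and smooth $\varphi$ with $\varphi\le V$ touching at $(t_0,x_0)$, we have $\max\{c\varphi-\varphi_t+H(\cdot,D\varphi,D^2\varphi),\ \varphi-g\}\ge 0$. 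Since $\varphi(t_0,x_0)=V(t_0,x_0)$ and one always has $V\le g$ is not guaranteed, the obstacle term $\varphi-g$ at $(t_0,x_0)$ may be negative, so one must show the \emph{first} term is $\ge 0$ in that case. This is exactly where Proposition~\ref{prop:WDPP_super} enters.

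The core step: assume $V(t_0,x_0)>g(x_0)$ (otherwise $\varphi(t_0,x_0)-g(x_0)=V(t_0,x_0)-g(x_0)>0$ already, done), and suppose for contradiction that $c(t_0,x_0)\varphi(t_0,x_0)-\partial_t\varphi(t_0,x_0)+H(t_0,x_0,D_x\varphi(t_0,x_0),D^2_x\varphi(t_0,x_0))<0$. By upper semicontinuity of $H$ and continuity of $\varphi$ and its derivatives, this strict inequality persists on a small parabolic neighborhood, i.e.\ for some $a\in M$ realizing (approximately) the infimum in $H$ and some $r>0$, the constant control $\alpha\equiv a$ makes $\mathcal{L}^a\varphi\le -\eta<0$ on $B_r(t_0,x_0)$, where $\mathcal{L}^a$ is the relevant generator incorporating the discount and running cost. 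Let $\theta$ be the exit time of $(s,X^{t_0,x_0,a}_s)$ from this ball (before $T$). Apply Proposition~\ref{prop:WDPP_super}(ii) with this $\alpha$, $\theta$, and $\varphi$ (after checking $\E[\bar\varphi^-(\theta,\bX_\theta)]<\infty$, which follows from the polynomial growth of $\varphi$ — inherited since $\varphi\le V$ and $\varphi$ is USC, and $\varphi$ bounded below on compacts — together with \eqref{est 1}) to get an $\alpha^*$ with $\E[F(\bX^{t_0,\bx_0,\alpha^*}_\theta)]\ge \E[Y_\theta\varphi(\theta,X_\theta)+Z_\theta]-4\eps$ for $\bx_0=(x_0,1,0)$. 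On the other hand, applying Itô's formula to $s\mapsto Y^{t_0,x_0,1,a}_s\varphi(s,X^{t_0,x_0,a}_s)+Z^{t_0,x_0,1,0,a}_s$ between $t_0$ and $\theta$, the drift is $Y_s\,\mathcal{L}^a\varphi(s,X_s)\le -\eta Y_s<0$ strictly (bounded away from $0$ since $Y_s\ge e^{-\bar c T}$ and $\theta>t_0$ with positive probability, and in fact $\theta-t_0$ has positive expectation), so $\E[Y_\theta\varphi(\theta,X_\theta)+Z_\theta]\ge \varphi(t_0,x_0)+\eta' = V(t_0,x_0)+\eta'$ for some $\eta'>0$ independent of $\eps$. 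Combining, $V(t_0,x_0)+\eta'-4\eps\le \E[F(\bX^{t_0,\bx_0,\alpha^*}_\theta)]\le \sup_{\alpha'}\E[F(\bX^{t_0,\bx_0,\alpha'}_\theta)]$; but one shows this last supremum is $\le V(t_0,x_0)$ using that $\theta\le\tau^{\alpha'}$-type comparison or, more directly, using the dynamic programming lower bound $\E[F(\bX_\theta)]\le \sup_{\alpha'}\inf_\tau\E[\cdots]=V$-consistency — the precise inequality $\sup_{\alpha'}\E[F(\bX^{t_0,\bx_0,\alpha'}_\theta)]\le \sup_{\alpha'}\E[\bar V(\theta,\bX^{t_0,\bx_0,\alpha'}_\theta)]$ via Lemma~\ref{lem:E = J} and the definition of $V$, together with $\bar V(\theta,\bX_\theta)=Y_\theta V(\theta,X_\theta)+Z_\theta$ and an estimate comparing this to the barrier; choosing $\eps<\eta'/4$ yields the contradiction. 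The main obstacle I anticipate is getting the measurable-selection of a near-optimal $a\in M$ in the Hamiltonian to be legitimately a \emph{constant} (or at least $\A_{t_0}$-admissible) control, and carefully propagating the strict inequality on a neighborhood when $H$ is only upper semicontinuous and $M$ is non-compact — one must work with $H$ itself (not $H_*$) on the supersolution side, which is fine since $H$ is USC, but the argument must avoid needing the infimum in \eqref{H} to be attained.
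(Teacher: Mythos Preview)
Your overall architecture is right---invoke Corollary~\ref{coro:V LSC} for lower semicontinuity, get \eqref{poly grow U,V} from \eqref{poly grow} and \eqref{est 1}, and for the viscosity part argue by contradiction using Proposition~\ref{prop:WDPP_super} combined with It\^o's formula along a constant control. But two points need repair.

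\textbf{The obstacle case is inverted.} You write ``assume $V(t_0,x_0)>g(x_0)$ (otherwise $\varphi(t_0,x_0)-g(x_0)=V(t_0,x_0)-g(x_0)>0$ already, done)''; this is self-contradictory. In fact $V\le g$ everywhere (take $\tau\equiv t$ in the inner infimum), so the case $V(t_0,x_0)>g(x_0)$ never occurs. The correct split is the paper's: if $V(t_0,x_0)=g(x_0)$ the obstacle term already gives $\max\{\cdots\}\ge 0$; otherwise $V(t_0,x_0)<g(x_0)$ and one must prove the PDE part is $\ge 0$.

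\textbf{The closing inequality goes the wrong way.} After applying Proposition~\ref{prop:WDPP_super}(ii) with $\tau=\theta$ and It\^o's formula you correctly obtain $\E\bigl[F(\bX^{t_0,\bx_0,\alpha^*}_\theta)\bigr]\ge V(t_0,x_0)+\eta'-4\eps$. You then want to bound the left-hand side \emph{from above} by $V(t_0,x_0)$, writing ``$\sup_{\alpha'}\E[F(\bX^{t_0,\bx_0,\alpha'}_\theta)]\le V(t_0,x_0)$''. This is false: for any fixed stopping time $\theta$ one has $\E[F(\bX^{t_0,\bx_0,\alpha'}_\theta)]\ge\inf_\tau\E[F(\bX^{t_0,\bx_0,\alpha'}_\tau)]$, hence $\sup_{\alpha'}\E[F(\bX^{t_0,\bx_0,\alpha'}_\theta)]\ge V(t_0,x_0)$, not $\le$. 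Your suggested fix via $\bar V$ also fails, since $V\le g$ gives $\bar V(\theta,\bX_\theta)\le F(\bX_\theta)$, again the wrong direction.

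The paper closes the loop differently, and this is the missing idea: Proposition~\ref{prop:WDPP_super}(ii) holds for \emph{every} $\tau\in\T^{t_0}_{t_0,T}$, so in particular for the Snell-optimal stopping time $\tau^n$ of $\alpha^{*,n}$ furnished by \eqref{optimal stopping}. One then chains
\[
V(t_0,x_0)\;\ge\;G^{\alpha^{*,n}}(t_0,(x_0,1,0))\;=\;\E\bigl[F(\bX^{t_0,\bx_0,\alpha^{*,n}}_{\tau^n})\bigr]\;\ge\;\E\bigl[\bar h(\theta\wedge\tau^n,\bX^{t_0,\bx_0,\zeta}_{\theta\wedge\tau^n})\bigr]-\tfrac{1}{n},
\]
where the equality is Snell optimality and the last inequality is the WDPP at $\tau=\tau^n$. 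Sending $n\to\infty$ and using Fatou (the integrand stays in the compact ball $\overline{B_r(t_0,x_0)}$, hence is bounded) yields $V(t_0,x_0)\ge\E[\bar h(\theta\wedge\tau^*,\bX_{\theta\wedge\tau^*})]$ with $\tau^*=\liminf_n\tau^n\in\T^{t_0}_{t_0,T}$, and this contradicts the strict It\^o inequality. The Snell time is essential precisely because it links $\E[F(\bX_{\tau^n})]$ back down to $G^{\alpha^{*,n}}\le V$; a fixed $\theta$ cannot do this.

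Finally, your worry at the end about needing the infimum in $H$ to be attained is unnecessary: once $H(t_0,x_0,\ldots)<0$, by definition of the infimum there exists \emph{some} $\zeta_0\in M$ with $H^{\zeta_0}(t_0,x_0,\ldots)<0$, and continuity of $H^{\zeta_0}$ (not of $H$) propagates this to a neighborhood.
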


\begin{proof}
The lower semicontinuity of $V$ was shown in Corollary~\ref{coro:V LSC}. Observe that $0\le V(t,x)\le \sup_{\alpha\in\A_t}\E[F(\bX^{t,x,1,0,\alpha}_T)]\le \sup_{\alpha\in\A}\E[F(\bX^{t,x,1,0,\alpha}_T)]=:v(t,x)$. Since $v$ satisfies \eqref{poly grow U,V} as a result of \cite[Theorem 3.1.5]{Krylov-book-80}, so does $V$. 
 
To prove the supersolution property, let $h\in C^{1,2}([0,T)\times\mathbb{R}^d)$ be such that
\begin{equation}\label{min of V-h}
0 = (V-h)(t_0,x_0) < (V-h)(t,x),\ \text{for any}\ (t,x)\in[0,T)\times\mathbb{R}^d,\ (t,x)\neq(t_0,x_0),
\end{equation}
for some $(t_0,x_0)\in[0,T)\times\mathbb{R}^d$. If $V(t_0,x_0)=g(x_0)$, then there is nothing to prove. We, therefore, assume that $V(t_0, x_0)<g(x_0)$. For such $(t_0,x_0)$ it is enough to prove the following inequality:
\begin{equation*}
0\leq c(t_0,x_0)h(t_0,x_0)-\frac{\partial h}{\partial t}(t_0,x_0)+\overline{H}(\cdot,D_x h,D^2_x h)(t_0,x_0).
\end{equation*} 
Assume the contrary. Then, by the definition of $\overline{H}$ in \eqref{bH}, there must exist $\zeta_0\in M$ such that 
\begin{equation*}\label{0>}
0 > c(t_0,x_0)h(t_0,x_0)-\frac{\partial h}{\partial t}(t_0,x_0)+(H^{\zeta_0})^*(\cdot,D_x h,D^2_x h)(t_0,x_0).
\end{equation*} 
Moreover, from the upper semicontinuity of $(H^{\zeta_0})^*$ and the fact that $(H^{\zeta_0})^*\ge H^{\zeta_0}$, we can choose some $r>0$ with $t_0+r<T$ such that
\begin{equation}\label{submart.}
0 > c(t,x)h(t,x)-\frac{\partial h}{\partial t}(t,x)+H^{\zeta_0}(\cdot,D_x h,D^2_x h)(t,x),\ \text{for all}\ (t,x)\in \bar{B}_{r}(t_0,x_0). 
\end{equation}
Define $\zeta\in\mathcal{A}$ by setting $\zeta_t=\zeta_0$ for all $t\geq 0$, and introduce the stopping time 
\[
\theta:=\inf\left\{s\ge t_0\ \middle|\ (s,X_s^{t_0,x_0,\zeta})\notin B_{r}(t_0,x_0)\right\}\in\mathcal{T}^{t_0}_{t_0,T}.
\]
Note that we have $\theta\in\mathcal{T}^{t_0}_{t_0,T}$ as the control $\zeta$ is by definition independent of $\mathcal{F}_{t_0}$. Now, by applying the product rule of stochastic calculus to $Y^{t_0,x_0,1,\zeta}_s h(s,X^{t_0,x_0,\zeta}_s)$ and recalling \eqref{submart.} and $c\le \bar{c}$, we obtain that for any $\tau\in\T_{t_0,T}^{t_0}$,
\begin{align}\label{bdd below}
V(t_0,x_0)=h(t_0,x_0) &= \E\bigg[Y^{t_0,x_0,1,\zeta}_{\theta\wedge\tau} h(\theta\wedge\tau,X^{t_0,x_0,\zeta}_{\theta\wedge\tau})\nonumber \\&\hspace{0.4in}+ \int_{t_0}^{\theta\wedge\tau} Y^{t_0,x_0,1,\zeta}_s\left(c h-\frac{\partial h}{\partial t}+H^{\zeta_0}(\cdot,D_x h, D_{x}^2 h)+f\right)(s,X^{t_0,x_0,\zeta}_{s},\zeta_0) ds \bigg]\nonumber\\
& < 
\E\left[Y^{t_0,x_0,1,\zeta}_{\theta\wedge\tau} h(\theta\wedge\tau,X^{t_0,x_0,\zeta}_{\theta\wedge\tau}) + \int_{t_0}^{\theta\wedge\tau} Y^{t_0,x_0,1,\zeta}_s f(s,X^{t_0,x_0,\zeta}_{s},\zeta_0) ds \right].
\end{align}
In the following, we will work towards a contradiction to \eqref{bdd below}. First, define
\[
\bar{h}(\theta,\bX^{t_0,x_0,1,0,\zeta}_{\theta}):=Y^{t_0,x_0,1,\zeta}_{\theta} h(\theta,X^{t_0,x_0,\zeta}_{\theta}) + \int_{t_0}^{\theta} Y^{t_0,x_0,1,\zeta}_s f(s,X^{t_0,x_0,\zeta}_{s},\zeta_0) ds.
\]
Note from \eqref{bdd below} that $\E[\bar{h}(\theta,\bX^{t_0,x_0,1,0,\zeta}_{\theta})]$ is bounded from below.  It follows from this fact that $\E[\bar{h}^-(\theta,\bX^{t_0,x_0,1,0,\zeta}_{\theta})]<\infty$, as we already have $\E[\bar{h}^+(\theta,\bX^{t_0,x_0,1,0,\zeta}_{\theta})]<\infty$ from Proposition~\ref{prop:WDPP_super} (i). For each $n\in\N$, we can therefore apply Proposition~\ref{prop:WDPP_super} (ii) and conclude that there exists $\alpha^{*,n}\in\mathcal{A}_{t_{0}}$, with $\alpha^{*,n}_s=\zeta_s$ for all $s\le \theta$, such that for any $\tau\in\mathcal{T}^{t_{0}}_{t_0,T}$,
\begin{equation}\label{apply Prop2.1}
\E[F(\bX^{t_0,x_0,1,0,\alpha^{*,n}}_\tau)] \ge \E\left[Y^{t_0,x_0,1,\zeta}_{\theta\wedge\tau} h(\theta\wedge\tau, X^{t_0,x_0,\zeta}_{\theta\wedge\tau}) + \int_{t_0}^{\theta\wedge\tau} Y^{t_0,x_0,1,\zeta}_s f(s,X^{t_0,x_0,\zeta}_{s},\zeta_0) ds \right]-\frac{1}{n}.
\end{equation}
Next, thanks to the definition of $V$ and the classical theory of Snell envelopes (see e.g. Appendix D, and especially Theorem D.12, in \cite{KS-book-98}), we have
\begin{equation}\label{V=hattau}
V(t_0,x_0)\ge G^{\alpha^{*,n}}(t_0,(x_0,1,0))= 
\E[F(\bX^{t_0,x_0,1,0,\alpha^{*,n}}_{\tau^n})],
\end{equation}
where 
\[
\tau^n:=\inf\left\{s\ge t_{0}\ \middle|\ G^{\alpha^{*,n}}(s,\bX^{t_0,x_0,1,0,\alpha^{*,n}}_s)=g(X^{t_0,x_0,\alpha^{*,n}}_s)\right\}\in\T^{t_0}_{t_0,T}.
\]
Note that we may apply \cite[Theorem D.12]{KS-book-98} because \eqref{dominated} holds. Combining \eqref{V=hattau} and \eqref{apply Prop2.1}, we obtain
\[
V(t_0,x_0)\ge \E\left[Y^{t_0,x_0,1,\zeta}_{\theta\wedge\tau^n} h(\theta\wedge\tau^n, X^{t_0,x_0,\zeta}_{\theta\wedge\tau^n}) + \int_{t_0}^{\theta\wedge\tau^n} Y^{t_0,x_0,1,\zeta}_s f(s,X^{t_0,x_0,\zeta}_{s},\zeta_0) ds \right]-\frac{1}{n}.
\]
By sending $n$ to infinity and using Fatou's Lemma, we conclude that 
\[
V(t_0,x_0)\ge \E\bigg[Y^{t_0,x_0,1,\zeta}_{\theta\wedge\tau^*} h(\theta\wedge\tau^*, X^{t_0,x_0,\zeta}_{\theta\wedge\tau^*}) + \int_{t_0}^{\theta\wedge\tau^*} Y^{t_0,x_0,1,\zeta}_s f(s,X^{t_0,x_0,\zeta}_{s},\zeta_0) ds \bigg],
\]
where $\tau^*:=\liminf_{n\to\infty}\tau^n$ is a stopping time in $\T^{t_0}_{t_0,T}$, thanks to the right continuity of the filtration $\mathbb{F}^{t_0}$. The above inequality, however, contradicts \eqref{bdd below}. 
\end{proof}

\begin{rem}
The lower semicontinuity of $V$ is needed for the proof of Proposition~\ref{prop:vis super}. To see this, suppose $V$ is not lower semicontinuous. Then $V$ should be replaced by $V_*$ in \eqref{min of V-h} and \eqref{bdd below}. The last inequality in the proof and \eqref{bdd below} would then yield $V_*(t_0,x_0)<V(t_0,x_0)$, which is not a contradiction.   
\end{rem}

\begin{rem}\label{rem:why bH}
Due to the lack of continuity in $t$ of the functions $b$, $\sigma$, and $f$, we use $\overline{H}$, instead of $H^*$, in \eqref{HJB obstacle 1}. If we were using $H^*$, we in general would not be able to find a $\zeta_0\in M$ such that \eqref{submart.} holds (due to the lack of continuity in 
$t$). If $b$, $\sigma$, and $f$ are actually continuous in $t$, then we see from \eqref{H} and \eqref{bH} that $\overline{H}=H=H^*$.  
\end{rem}


\section{Subsolution Property of $U^*$}\label{sec:sub}
As in Section~\ref{sec:super}, we will first prove a continuity result (Lemma~\ref{lem:L continuous}), which leads to a weak dynamic programming principle for $U$ (Proposition~\ref{prop:WDPP_sub}). Then, we will show that the subsolution property of $U^*$ follows from this weak dynamic programming principle (Proposition~\ref{prop:vis sub}). Remember that $U^*$ is the upper semicontinuous envelope of $U$ defined as in \eqref{envelopes}.

Fix $s\in[0,T]$ and $\xi\in L^p_d(\Omega,\F_s)$ for some $p\in[1,\infty)$. For any $\alpha\in\A$ and $\pi_1,\pi_2\in\Pi_{s,T}$ with $\pi_1[\beta]\le\pi_2[\beta]$ $\overline{\P}$-a.s. for all $\beta\in\A$, we define 
\begin{equation}\label{defn B}
\B^{s,\xi,\alpha}_{\pi_1}:=\bigg\{\beta\in\A\ \bigg|\ \int_s^{\pi_1[\alpha]}\rho'(\beta_u,\alpha_u)du=0\ \ \overline{\P}\hbox{-a.s.}\bigg\},
\end{equation}
and introduce the random variable
\begin{equation}\label{defn K}
\begin{split} 
K^{s,\xi,\alpha}(\pi_1,\pi_2)&:=\\
\esssup_{\beta\in\B^{s,\xi,\alpha}_{\pi_1}}\E&\left[\int_{\pi_1[\alpha]}^{\pi_2[\beta]}Y^{\pi_1[\alpha],X^{s,\xi,\beta}_{\pi_1[\alpha]},1,\beta}_u f(u,X^{s,\xi,\beta}_u,\beta_u)du + Y^{\pi_1[\alpha],X^{s,\xi,\beta}_{\pi_1[\alpha]},1,\beta}_{\pi_2[\beta]}g(X^{s,\xi,\beta}_{\pi_2[\beta]})\ \middle|\ \F_{\pi_1[\alpha]}\right].
\end{split}
\end{equation}
Observe from the definition of $\B^{s,\xi,\alpha}_{\pi_1}$ and Definition~\ref{defn:strategy} (i) that 
\begin{equation}\label{pi[beta]=pi[alpha]}
\pi_1[\beta]=\pi_1[\alpha]\ \ \overline{\P}\hbox{-a.s.}\ \ \forall \beta\in\B^{s,\xi,\alpha}_{\pi_1}.
\end{equation}
This in particular implies $\pi_2[\beta]\ge\pi_1[\beta]=\pi_1[\alpha]$ $\overline{\P}$-a.s. $\forall \beta\in\B^{s,\xi,\alpha}_{\pi_1}$, which shows that $K^{s,\xi,\alpha}(\pi_1,\pi_2)$ is well-defined. Given any constant strategies $\pi_1[\cdot]\equiv\tau_1\in\T^s_{s,T}$ and $\pi_2[\cdot]\equiv\tau_2\in\T^s_{s,T}$, we will simply write $K^{s,\xi,\alpha}(\pi_1,\pi_2)$ as $K^{s,\xi,\alpha}(\tau_1,\tau_2)$. For the particular case where $\xi=x\in\R^d$, we also consider
\begin{equation*}
\Gamma^{s,x,\alpha}(\pi_1,\pi_2):=\int_s^{\pi_1[\alpha]}Y^{s,x,1,\alpha}_u f(u,X^{s,x,\alpha}_u,\alpha_u)du + Y^{s,x,1,\alpha}_{\pi_1[\alpha]}K^{s,x,\alpha}(\pi_1,\pi_2).
\end{equation*}

\begin{rem}\label{rem:esssup=lim}
Let us write $K^{s,x,\alpha}(\pi_1,\pi_2)=\esssup_{\beta\in\B^{s,x,\alpha}_{\pi_1}}\E[R^{s,x,\alpha}_{\pi_1,\pi_2}(\beta)\mid\F_{\pi_1[\alpha]}]$ for simplicity. Note that the set of random variables $\{\E[R^{s,x,\alpha}_{\pi_1,\pi_2}(\beta)\mid\F_{\pi_1[\alpha]}]\}_{\beta\in\B^{s,x,\alpha}_{\pi_1}}$ is closed under pairwise maximization. Indeed, given $\beta_1,\beta_2\in\B^{s,x,\alpha}_{\pi_1}$, set $A:=\{\E[R^{s,x,\alpha}_{\pi_1,\pi_2}(\beta_1)\mid\F_{\pi_1[\alpha]}]\ge \E[R^{s,x,\alpha}_{\pi_1,\pi_2}(\beta_2)\mid\F_{\pi_1[\alpha]}]\}\in\F_{\pi_1[\alpha]}$ and define $\beta_3:=\beta_1 1_{[0,\pi_1[\alpha])}+(\beta_1 1_A +\beta_2 1_{A^c})1_{[\pi_1[\alpha],T]}\in\B^{s,x,\alpha}_{\pi_1}$. Then, observe that
\begin{equation*}
\begin{split}
\E[R^{s,x,\alpha}_{\pi_1,\pi_2}(\beta_3)\mid\F_{\pi_1[\alpha]}]&=\E[R^{s,x,\alpha}_{\pi_1,\pi_2}(\beta_1)\mid\F_{\pi_1[\alpha]}]1_A+\E[R^{s,x,\alpha}_{\pi_1,\pi_2}(\beta_2)\mid\F_{\pi_1[\alpha]}]1_{A^c}\\
&=\E[R^{s,x,\alpha}_{\pi_1,\pi_2}(\beta_1)\mid\F_{\pi_1[\alpha]}]\vee\E[R^{s,x,\alpha}_{\pi_1,\pi_2}(\beta_2)\mid\F_{\pi_1[\alpha]}].
\end{split}
\end{equation*}
Thus, we conclude from Theorem A.3 in \cite[Appendix A]{KS-book-98} that there exists a sequence $\{\beta^n\}_{n\in\N}$ in $\B^{s,x,\alpha}_{\pi_1}$ such that $K^{s,x,\alpha}(\pi_1,\pi_2)=\uparrow\lim_{n\to\infty}\E[R^{s,x,\alpha}_{\pi_1,\pi_2}(\beta^n)\mid\F_{\pi_1[\alpha]}]$ $\overline{\P}$-a.s.
\end{rem}

\begin{lem}\label{lem:K^s=K^r}
Fix $(s,x)\in[0,T]\times\R^d$ and $\alpha\in\A$. For any $r\in[s,T]$ and $\pi\in\Pi_{r,T}$, we have
\[
K^{s,x,\alpha}(r,\pi)=K^{r,X^{s,x,\alpha}_r,\alpha}(r,\pi)\ \overline{\P}\hbox{-a.s.}
\]
\end{lem}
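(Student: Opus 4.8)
## Proof Proposal for Lemma~\ref{lem:K^s=K^r}

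The plan is to unwind both sides of the asserted identity into essential suprema of conditional expectations over the same index set $\B$ and then match them term by term using the flow property of the state process together with the fact that, for constant strategies $\pi_1[\cdot]\equiv r$, the ``freezing'' set $\B^{s,x,\alpha}_{r}$ depends on $\alpha$ only through its restriction to $[s,r)$. First I would observe that, since $\pi_1[\cdot]\equiv r\in\T^s_{s,T}$, \eqref{pi[beta]=pi[alpha]} is automatic, and
\[
\B^{s,x,\alpha}_{r}=\{\beta\in\A\mid \beta_u=\alpha_u\ \hbox{for}\ u\in[s,r)\ \overline{\P}\hbox{-a.s.}\},\qquad
\B^{r,X^{s,x,\alpha}_r,\alpha}_{r}=\A,
\]
where the second equality holds because the interval $[r,r)$ is empty. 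So the right-hand side is an essential supremum over all of $\A$, while the left-hand side is over the smaller set $\B^{s,x,\alpha}_r$. The ``$\le$'' direction of the lemma is therefore the nontrivial inclusion to fear, but it in fact comes for free: for $\beta\in\B^{s,x,\alpha}_r$ we have $X^{s,x,\beta}_u=X^{s,x,\alpha}_u$ on $[s,r]$ (same control on $[s,r)$, pathwise uniqueness), so in particular $X^{s,x,\beta}_r=X^{s,x,\alpha}_r$, and the flow property (Remark~\ref{rem:flow property}(i)) gives $X^{s,x,\beta}_u=X^{r,X^{s,x,\alpha}_r,\beta}_u$ for $u\in[r,T]$ up to a $\overline{\P}$-null set. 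Hence each conditional-expectation term defining $K^{s,x,\alpha}(r,\pi)$ equals the corresponding term in $K^{r,X^{s,x,\alpha}_r,\alpha}(r,\pi)$ for that same $\beta$, which yields ``$\le$''.

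For the reverse inequality I would argue that enlarging the index set from $\B^{s,x,\alpha}_r$ to all of $\A$ does not increase the essential supremum, because every $\beta\in\A$ can be modified on $[s,r)$ without changing its contribution. Concretely, given $\beta\in\A$, define $\tilde\beta:=\alpha 1_{[s,r)}+\beta 1_{[r,T]}\in\B^{s,x,\alpha}_r$; then $\tilde\beta_u=\beta_u$ for $u\in[r,T]$, so $X^{r,X^{s,x,\alpha}_r,\tilde\beta}_u=X^{r,X^{s,x,\alpha}_r,\beta}_u$ for $u\in[r,T]$ by pathwise uniqueness, and the random variable inside the conditional expectation in \eqref{defn K} — which only involves the trajectory and control on $[r,T]$ once we have identified $X^{s,x,\beta}_r$ with $X^{s,x,\alpha}_r$ — is unchanged. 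Therefore the term for $\beta$ in $K^{r,X^{s,x,\alpha}_r,\alpha}(r,\pi)$ coincides with the term for $\tilde\beta\in\B^{s,x,\alpha}_r$ in $K^{s,x,\alpha}(r,\pi)$, giving ``$\ge$''. Combining the two inclusions and invoking the standard fact that an essential supremum is monotone in the family over which it is taken (Theorem A.3 in \cite[Appendix A]{KS-book-98}, as used in Remark~\ref{rem:esssup=lim}) completes the proof.

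The main obstacle I anticipate is purely bookkeeping rather than conceptual: one must be careful that the integral $\int_{r}^{\pi[\beta]}Y^{r,X^{s,x,\alpha}_r,1,\beta}_u f(u,X^{s,x,\beta}_u,\beta_u)\,du$ and the terminal term $Y^{r,\cdots,1,\beta}_{\pi[\beta]}g(X^{s,x,\beta}_{\pi[\beta]})$ really do depend on $\beta$ and the path only through $[r,T]$, so that the replacement of $\beta$ by $\tilde\beta$ genuinely leaves them invariant $\overline{\P}$-a.s., and that $\pi[\beta]=\pi[\tilde\beta]$ since $\pi\in\Pi^r_{r,T}$ only sees controls on $[r,T]$ (using the non-anticipativity property \eqref{strategy} of $\pi$ together with $\tilde\beta=\beta$ on $[r,T]$). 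One should also note in passing that the starting point $Y$ is initialized to $1$ at time $r$ on both sides, so the discount factors match exactly; no extra normalization is needed. Modulo these checks — each of which follows from pathwise uniqueness of \eqref{SDE X}, the flow property in Remark~\ref{rem:flow property}, and Definition~\ref{defn:strategy} — the identity follows.
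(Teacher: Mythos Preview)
Your proof is correct and follows essentially the same approach as the paper: use the flow property to rewrite $K^{s,x,\alpha}(r,\pi)$ in terms of $X^{r,X^{s,x,\alpha}_r,\beta}$, observe $\B^{s,x,\alpha}_r\subseteq\A=\B^{r,X^{s,x,\alpha}_r,\alpha}_r$ for the ``$\le$'' direction, and for ``$\ge$'' replace an arbitrary $\beta\in\A$ by $\tilde\beta$ agreeing with $\alpha$ on $[s,r)$ and with $\beta$ on $[r,T]$, invoking the non-anticipativity of $\pi$ to get $\pi[\tilde\beta]=\pi[\beta]$. The only cosmetic point is that your $\tilde\beta:=\alpha 1_{[s,r)}+\beta 1_{[r,T]}$ leaves $[0,s)$ unspecified; the paper writes $\bar\beta:=\alpha 1_{[0,r)}+\beta 1_{[r,T]}$, which is harmless since membership in $\B^{s,x,\alpha}_r$ only constrains $[s,r)$.
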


\begin{proof}
For any $\beta\in\B^{s,x,\alpha}_{r}$, we see from Remark~\ref{rem:flow property} (i) that $X^{s,x,\beta}_u=X^{r,X^{s,x,\alpha}_r,\beta}_u$ for $u\in[r,T]$ $\overline{\P}$-a.s. It follows from \eqref{defn K} that
\begin{equation*}
K^{s,x,\alpha}(r,\pi)=\esssup_{\beta\in\B^{s,x,\alpha}_r}\E\bigg[\int_{r}^{\pi[\beta]}Y^{r,X^{s,x,\alpha}_{r},1,\beta}_u f(u,X^{r,X^{s,x,\alpha}_r,\beta}_u,\beta_u)du + Y^{r,X^{s,x,\alpha}_{r},1,\beta}_{\pi[\beta]}g(X^{r,X^{s,x,\alpha}_r,\beta}_{\pi[\beta]})\ \bigg|\ \F_{r}\bigg].
\end{equation*}
Observing from \eqref{defn B} that $\B^{s,x,\alpha}_r\subseteq\A=\B^{r,X^{s,x,\alpha}_r,\alpha}_r$, we conclude $K^{s,x,\alpha}(r,\pi)\le K^{r,X^{s,x,\alpha}_r,\alpha}(r,\pi)$. On the other hand, for any $\beta\in\A$, define $\bar{\beta}:=\alpha 1_{[0,r)}+\beta 1_{[r,T]}\in\B^{s,x,\alpha}_r$. Then, by Remark~\ref{rem:flow property} (i) again, we have $X^{s,x,\bar{\beta}}_u=X^{r,X^{s,x,\alpha}_r,\beta}_u$ for $u\in[r,T]$ $\overline{\P}$-a.s. Also, we have $\pi[\bar{\beta}]=\pi[\beta]$, thanks to Definition~\ref{defn:strategy} (i). Therefore,
\begin{equation*}
\begin{split}
&\E\bigg[\int_{r}^{\pi[\beta]}Y^{r,X^{s,x,\alpha}_{r},1,\beta}_u f(u,X^{r,X^{s,x,\alpha}_r,\beta}_u,\beta_u)du + Y^{r,X^{s,x,\alpha}_{r},1,\beta}_{\pi[\beta]}g(X^{r,X^{s,x,\alpha}_r,\beta}_{\pi[\beta]})\ \bigg|\ \F_{r}\bigg]\\
&\hspace{0.7in}=\E\bigg[\int_{r}^{\pi[\bar{\beta}]}Y^{r,X^{s,x,\bar{\beta}}_{r},1,\bar{\beta}}_u f(u,X^{s,x,\bar{\beta}}_u,\bar{\beta}_u)du + Y^{r,X^{s,x,\bar{\beta}}_{r},1,\bar{\beta}}_{\pi[\bar{\beta}]}g(X^{s,x,\bar{\beta}}_{\pi[\bar{\beta}]})\ \bigg|\ \F_{r}\bigg].
\end{split}
\end{equation*} 
In view of \eqref{defn K}, this implies $K^{r,X^{s,x,\alpha}_r,\alpha}(r,\pi)\le K^{s,x,\alpha}(r,\pi)$.
\end{proof}

\begin{lem}\label{lem:supermartingale}
Fix $(s,x)\in[0,T]\times\R^d$. Given $\alpha\in\A$ and $\pi_1,\pi_2,\pi_3\in\Pi_{s,T}$ with $\pi_1[\beta]\le\pi_2[\beta]\le\pi_3[\beta]$ $\overline{\P}$-a.s. for all $\beta\in\A$, 
it holds $\overline{\P}$-a.s. that
\begin{equation*}
\E\bigg[\int_{\pi_1[\alpha]}^{\pi_2[\alpha]}Y^{s,x,1,\alpha}_u f(u,X^{s,x,\alpha}_u,\alpha_u)du + Y^{s,x,1,\alpha}_{\pi_2[\alpha]}K^{s,x,\alpha}(\pi_2,\pi_3)\ \bigg|\ \F_{\pi_1[\alpha]}\bigg]\le Y^{s,x,1,\alpha}_{\pi_1[\alpha]}K^{s,x,\alpha}(\pi_1,\pi_3).
\end{equation*}
Moreover, we have the following supermartingale property:
\begin{equation*}
\E[\Gamma^{s,x,\alpha}(\pi_2,\pi_3)\mid\F_{\pi_1[\alpha]}]\le\Gamma^{s,x,\alpha}(\pi_1,\pi_3)\ \overline{\P}\hbox{-a.s.}
\end{equation*}
\end{lem}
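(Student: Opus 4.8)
The supermartingale property follows at once from the first displayed inequality. Indeed, writing
\[
\Gamma^{s,x,\alpha}(\pi_2,\pi_3)=\int_s^{\pi_1[\alpha]}Y^{s,x,1,\alpha}_u f(u,X^{s,x,\alpha}_u,\alpha_u)\,du+\left(\int_{\pi_1[\alpha]}^{\pi_2[\alpha]}Y^{s,x,1,\alpha}_u f(u,X^{s,x,\alpha}_u,\alpha_u)\,du+Y^{s,x,1,\alpha}_{\pi_2[\alpha]}K^{s,x,\alpha}(\pi_2,\pi_3)\right),
\]
one takes $\E[\,\cdot\mid\F_{\pi_1[\alpha]}]$, notes that the first term on the right is $\F_{\pi_1[\alpha]}$-measurable, bounds the parenthesized term above by $Y^{s,x,1,\alpha}_{\pi_1[\alpha]}K^{s,x,\alpha}(\pi_1,\pi_3)$ via the first inequality, and recognizes $\Gamma^{s,x,\alpha}(\pi_1,\pi_3)$. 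Since $f,g\ge 0$ and $Y^{s,x,1,\alpha}\ge 0$, all conditional expectations below are well defined (and finite, by the growth assumptions and \eqref{dominated}). So the plan is to prove the first inequality.

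Three facts enter. (a) $Y$ is multiplicative along the flow: for $s\le r\le u$, with $r$ possibly a stopping time, $Y^{s,x,1,\beta}_u=Y^{s,x,1,\beta}_r\,Y^{r,X^{s,x,\beta}_r,1,\beta}_u$, since $Y^{s,x,1,\beta}_u=\exp(-\int_s^u c(v,X^{s,x,\beta}_v)\,dv)$ and $X$ has the flow property of Remark~\ref{rem:flow property}(i). (b) From $\pi_1[\alpha]\le\pi_2[\alpha]$ (the ordering hypothesis applied to $\beta=\alpha$) we get $[s,\pi_1[\alpha])\subseteq[s,\pi_2[\alpha])$, hence $\B^{s,x,\alpha}_{\pi_2}\subseteq\B^{s,x,\alpha}_{\pi_1}$, and therefore $\pi_1[\beta]=\pi_1[\alpha]$ for all $\beta\in\B^{s,x,\alpha}_{\pi_2}$ by \eqref{pi[beta]=pi[alpha]}. (c) By Remark~\ref{rem:esssup=lim} applied to $K^{s,x,\alpha}(\pi_2,\pi_3)$, there is $\{\beta^n\}_{n\in\N}\subset\B^{s,x,\alpha}_{\pi_2}$ with $\E[R^{s,x,\alpha}_{\pi_2,\pi_3}(\beta^n)\mid\F_{\pi_2[\alpha]}]\uparrow K^{s,x,\alpha}(\pi_2,\pi_3)$ $\overline{\P}$-a.s., in the notation of that remark.

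Now fix $n$. Since $\beta^n=\alpha$ on $[s,\pi_2[\alpha])$, pathwise uniqueness gives $X^{s,x,\beta^n}_u=X^{s,x,\alpha}_u$ and $Y^{s,x,1,\beta^n}_u=Y^{s,x,1,\alpha}_u$ for $u\le\pi_2[\alpha]$. Using (a) to express $Y^{s,x,1,\beta^n}_{\pi_1[\alpha]}R^{s,x,\alpha}_{\pi_1,\pi_3}(\beta^n)$ and $Y^{s,x,1,\beta^n}_{\pi_2[\alpha]}R^{s,x,\alpha}_{\pi_2,\pi_3}(\beta^n)$ through the global processes $Y^{s,x,1,\beta^n},X^{s,x,\beta^n}$, splitting the integral at $\pi_2[\alpha]$ (legitimate because $\pi_1[\alpha]\le\pi_2[\alpha]=\pi_2[\beta^n]\le\pi_3[\beta^n]$), and replacing $\beta^n$ by $\alpha$ on $[\pi_1[\alpha],\pi_2[\alpha])$, one obtains the path-splicing identity
\[
Y^{s,x,1,\alpha}_{\pi_1[\alpha]}\,R^{s,x,\alpha}_{\pi_1,\pi_3}(\beta^n)=\int_{\pi_1[\alpha]}^{\pi_2[\alpha]}Y^{s,x,1,\alpha}_u f(u,X^{s,x,\alpha}_u,\alpha_u)\,du+Y^{s,x,1,\alpha}_{\pi_2[\alpha]}\,R^{s,x,\alpha}_{\pi_2,\pi_3}(\beta^n).
\]
Taking $\E[\,\cdot\mid\F_{\pi_2[\alpha]}]$ and then $\E[\,\cdot\mid\F_{\pi_1[\alpha]}]$, using the tower property and pulling the $\F_{\pi_1[\alpha]}$- and $\F_{\pi_2[\alpha]}$-measurable factors out of the conditional expectations, this becomes
\[
\E\left[\int_{\pi_1[\alpha]}^{\pi_2[\alpha]}Y^{s,x,1,\alpha}_u f(u,X^{s,x,\alpha}_u,\alpha_u)\,du+Y^{s,x,1,\alpha}_{\pi_2[\alpha]}\E[R^{s,x,\alpha}_{\pi_2,\pi_3}(\beta^n)\mid\F_{\pi_2[\alpha]}]\;\middle|\;\F_{\pi_1[\alpha]}\right]=Y^{s,x,1,\alpha}_{\pi_1[\alpha]}\E[R^{s,x,\alpha}_{\pi_1,\pi_3}(\beta^n)\mid\F_{\pi_1[\alpha]}],
\]
and the right-hand side is $\le Y^{s,x,1,\alpha}_{\pi_1[\alpha]}K^{s,x,\alpha}(\pi_1,\pi_3)$ because $\beta^n\in\B^{s,x,\alpha}_{\pi_1}$. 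Letting $n\to\infty$, the integrand on the left is nonnegative and, by (c), increases to $\int_{\pi_1[\alpha]}^{\pi_2[\alpha]}Y^{s,x,1,\alpha}_u f(u,X^{s,x,\alpha}_u,\alpha_u)\,du+Y^{s,x,1,\alpha}_{\pi_2[\alpha]}K^{s,x,\alpha}(\pi_2,\pi_3)$, so conditional monotone convergence yields the first inequality.

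I expect the only real difficulty to be bookkeeping: tracking the triple (initial time, initial value, control) of $X$ and $Y$ through the splicing, checking the measurabilities needed to pull $Y^{s,x,1,\alpha}_{\pi_1[\alpha]}$ and $Y^{s,x,1,\alpha}_{\pi_2[\alpha]}$ out of the conditional expectations, and making sure the approximating controls $\beta^n$ of Remark~\ref{rem:esssup=lim} indeed lie in the smaller class $\B^{s,x,\alpha}_{\pi_1}$ — which is exactly where the hypothesis $\pi_1[\alpha]\le\pi_2[\alpha]$ is used. The only analytic input beyond this is the conditional monotone convergence theorem, applicable since $f,g,Y\ge 0$.
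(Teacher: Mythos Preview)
Your proof is correct and follows essentially the same approach as the paper's: both use the approximating sequence $\{\beta^n\}\subset\B^{s,x,\alpha}_{\pi_2}$ from Remark~\ref{rem:esssup=lim}, the multiplicativity of $Y$ along the flow, the inclusion $\B^{s,x,\alpha}_{\pi_2}\subseteq\B^{s,x,\alpha}_{\pi_1}$, the tower property, and conditional monotone convergence. The only cosmetic difference is that you first establish a pathwise splicing identity for $Y^{s,x,1,\alpha}_{\pi_1[\alpha]}R^{s,x,\alpha}_{\pi_1,\pi_3}(\beta^n)$ and then condition, whereas the paper starts from $\E[Y^{s,x,1,\alpha}_{\pi_2[\alpha]}K^{s,x,\alpha}(\pi_2,\pi_3)\mid\F_{\pi_1[\alpha]}]$ and unwinds; the underlying computation is the same.
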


\begin{proof}
By Remark~\ref{rem:esssup=lim}, there exists a sequence $\{\beta^n\}_{n\in\N}$ in $\B^{s,x,\alpha}_{\pi_2}$ such that $K^{s,x,\alpha}(\pi_2,\pi_3)=\uparrow\lim_{n\to\infty}\E[R^{s,x,\alpha}_{\pi_2,\pi_3}(\beta^n)\mid\F_{\pi_2[\alpha]}]$ $\overline{\P}$-a.s. 
From the definition of $\B^{s,x,\alpha}_{\pi_2}$ in \eqref{defn B}, $\beta^n_u = \alpha_u$ for a.e. $u\in[s,\pi_2[\alpha])$ $\overline{\P}$-a.s. We can then compute as follows:
\begin{equation*}
\begin{split}
&\E\left[Y^{s,x,1,\alpha}_{\pi_2[\alpha]}K^{s,x,\alpha}(\pi_2,\pi_3)\ \middle|\ \F_{\pi_1[\alpha]}\right]\\
&=\E\bigg\{Y^{s,x,1,\alpha}_{\pi_2[\alpha]}\cdot\\
&\hspace{0.1in}\lim_{n\to\infty}\E\bigg[\int_{\pi_2[\alpha]}^{\pi_3[\beta^n]}Y^{\pi_2[\alpha],X^{s,x,\beta^n}_{\pi_2[\alpha]},1,\beta^n}_u f(u,X^{s,x,\beta^n}_u,\beta^n_u)du + Y^{\pi_2[\alpha],X^{s,x,\beta^n}_{\pi_2[\alpha]},1,\beta^n}_{\pi_3[\beta^n]}g(X^{s,x,\beta^n}_{\pi_3[\beta^n]})\bigg| \F_{\pi_2[\alpha]}\bigg]\bigg| \F_{\pi_1[\alpha]}\bigg\}\\
&=\E\bigg\{\lim_{n\to\infty}\E\bigg[\int_{\pi_2[\alpha]}^{\pi_3[\beta^n]}Y^{s,x,1,\beta^n}_u f(u,X^{s,x,\beta^n}_u,\beta^n_u)du + Y^{s,x,1,\beta^n}_{\pi_3[\beta^n]}g(X^{s,x,\beta^n}_{\pi_3[\beta^n]})\ \bigg|\ \F_{\pi_2[\alpha]}\bigg]\ \bigg|\ \F_{\pi_1[\alpha]} \bigg\}\\
&=\lim_{n\to\infty}\E\bigg[\int_{\pi_2[\alpha]}^{\pi_3[\beta^n]}Y^{s,x,1,\beta^n}_u f(u,X^{s,x,\beta^n}_u,\beta^n_u)du + Y^{s,x,1,\beta^n}_{\pi_3[\beta^n]}g(X^{s,x,\beta^n}_{\pi_3[\beta^n]})\ \bigg|\ \F_{\pi_1[\alpha]}\bigg],
\end{split}
\end{equation*}
where the last line follows from the monotone convergence theorem and the tower property for conditional expectations. We therefore conclude that
\begin{equation*}
\begin{split}
&\E\bigg[\int_{\pi_1[\alpha]}^{\pi_2[\alpha]}Y^{s,x,1,\alpha}_u f(u,X^{s,x,\alpha}_u,\alpha_u)du + Y^{s,x,1,\alpha}_{\pi_2[\alpha]}K^{s,x,\alpha}(\pi_2,\pi_3)\ \bigg|\ \F_{\pi_1[\alpha]}\bigg]\\
&=\lim_{n\to\infty}\E\bigg[\int_{\pi_1[\alpha]}^{\pi_3[\beta^n]}Y^{s,x,1,\beta^n}_u f(u,X^{s,x,\beta^n}_u,\beta^n_u)du + Y^{s,x,1,\beta^n}_{\pi_3[\beta^n]}g(X^{s,x,\beta^n}_{\pi_3[\beta^n]})\ \bigg|\ \F_{\pi_1[\alpha]}\bigg]\\
&=Y^{s,x,1,\alpha}_{\pi_1[\alpha]}\lim_{n\to\infty}\E\bigg[\int_{\pi_1[\alpha]}^{\pi_3[\beta^n]}Y^{\pi_1[\alpha],X^{s,x,\beta^n}_{\pi_1[\alpha]},1,\beta^n}_u f(u,X^{s,x,\beta^n}_u,\beta^n_u)du + Y^{\pi_1[\alpha],X^{s,x,\beta^n}_{\pi_1[\alpha]},1,\beta^n}_{\pi_3[\beta^n]}g(X^{s,x,\beta^n}_{\pi_3[\beta^n]}) \bigg| \F_{\pi_1[\alpha]}\bigg]\\
&\le Y^{s,x,1,\alpha}_{\pi_1[\alpha]} K^{s,x,\alpha}(\pi_1,\pi_3),
\end{split}
\end{equation*} 
where the inequality follows from the fact that $\beta^n\in\B^{s,x,\alpha}_{\pi_2}\subseteq\B^{s,x,\alpha}_{\pi_1}$. It then follows that
\begin{equation*}
\begin{split}
\E[\Gamma^{s,x,\alpha}(\pi_2,\pi_3)\mid\F_{\pi_1[\alpha]}]&=\int_{s}^{\pi_1[\alpha]}Y^{s,x,1,\alpha}_{u}f(u,X^{s,x,\alpha}_u,\alpha_u)du\\
&\hspace{0.2in}+\E\bigg[\int_{\pi_1[\alpha]}^{\pi_2[\alpha]}Y^{s,x,1,\alpha}_u f(u,X^{s,x,\alpha}_u,\alpha_u)du + Y^{s,x,1,\alpha}_{\pi_2[\alpha]}K^{s,x,\alpha}(\pi_2,\pi_3)\ \bigg|\ \F_{\pi_1[\alpha]}\bigg]\\
&\le \int_{s}^{\pi_1[\alpha]}Y^{s,x,1,\alpha}_{u}f(u,X^{s,x,\alpha}_u,\alpha_u)du+Y^{s,x,1,\alpha}_{\pi_1[\alpha]}K^{s,x,\alpha}(\pi_1,\pi_3)=\Gamma^{s,x,\alpha}(\pi_1,\pi_3).
\end{split}
\end{equation*}
\end{proof}

\begin{lem}\label{lem:A=A_t}
For any $(t,\bx)\in[0,T]\times\mathcal{S}$ and $\pi\in\Pi_{t,T}$,
\[
\sup_{\alpha\in\A}J(t,\bx;\alpha,\pi[\alpha])=\sup_{\alpha\in\A_t}J(t,\bx;\alpha,\pi[\alpha]).
\]
\end{lem}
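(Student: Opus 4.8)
The plan is to condition at the deterministic time $t$ and use the shift machinery of Section~\ref{sec:prelim}. Since $\A_t\subseteq\A$, we have $\sup_{\alpha\in\A}J(t,\bx;\alpha,\pi[\alpha])\ge\sup_{\alpha\in\A_t}J(t,\bx;\alpha,\pi[\alpha])$ for free, so the whole content is the reverse inequality, and by taking the supremum at the end it is enough to bound $J(t,\bx;\alpha,\pi[\alpha])$ for an arbitrary fixed $\alpha\in\A$ by $\sup_{\beta\in\A_t}J(t,\bx;\beta,\pi[\beta])$. The idea is that, once the pre-$t$ randomness is frozen by conditioning on $\F_t$, the control $\alpha$ becomes, $\overline{\P}$-a.s., a control in $\A_t$.

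Concretely, $t$ is a (deterministic) element of $\T_{t,T}$ and $\pi[\alpha]\in\T^t_{t,T}\subseteq\T_{t,T}$, so I would apply Lemma~\ref{lem:E = J} with $\theta\equiv t$ and $\tau=\pi[\alpha]$. Using $\bX^{t,\bx,\alpha}_t=\bx$, $\theta(\omega)\equiv t$, and the tower property, this gives
\[
J(t,\bx;\alpha,\pi[\alpha])=\E[F(\bX^{t,\bx,\alpha}_{\pi[\alpha]})]
=\E\big[\E[F(\bX^{t,\bx,\alpha}_{\pi[\alpha]})\mid\F_t]\big]
=\E\big[J\big(t,\bx;\alpha^{t,\omega},(\pi[\alpha])^{t,\omega}\big)\big].
\]
For $\overline{\P}$-a.e.\ $\omega$ one has $\alpha^{t,\omega}\in\A_t$, so \emph{if} the shifted stopping time $(\pi[\alpha])^{t,\omega}$ is again the $\pi$-response to $\alpha^{t,\omega}$, i.e.\ $(\pi[\alpha])^{t,\omega}=\pi[\alpha^{t,\omega}]$ $\overline{\P}$-a.s.\ for $\overline{\P}$-a.e.\ $\omega$, then each integrand equals $J(t,\bx;\alpha^{t,\omega},\pi[\alpha^{t,\omega}])\le\sup_{\beta\in\A_t}J(t,\bx;\beta,\pi[\beta])$, and integrating and then taking $\sup_{\alpha\in\A}$ would finish the proof.

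The content of the lemma is therefore entirely in the identity $(\pi[\alpha])^{t,\omega}=\pi[\alpha^{t,\omega}]$ (holding $\overline{\P}$-a.s.\ for $\overline{\P}$-a.e.\ $\omega$), which I expect to be the main obstacle. I would establish it in two steps. First, since $\pi[\alpha]\in\T^t_{t,T}$ is $\F^t_T$-measurable, Proposition~\ref{prop:xi indep. of F_t}(ii) gives $(\pi[\alpha])^{t,\omega}=\pi[\alpha]$ $\overline{\P}$-a.s.\ for $\overline{\P}$-a.e.\ $\omega$, so the identity is equivalent to $\pi[\alpha^{t,\omega}]=\pi[\alpha]$ $\overline{\P}$-a.s.\ for $\overline{\P}$-a.e.\ $\omega$ — in words, that a non-anticipating strategy commutes with the time-$t$ shift of its argument. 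I would derive this from the non-anticipating property of $\pi$ in Definition~\ref{defn:strategy} together with the shift/concatenation properties of Section~\ref{sec:prelim}, in the same spirit in which Proposition~\ref{prop:alpha^t,w=alpha} and Proposition~\ref{lem: tau a stopping time a.s.} treat the time-$t$ shift of controls and of stopping times, respectively. The subtlety is that \eqref{strategy} only relates $\pi[\alpha]$ and $\pi[\alpha^{t,\omega}]$ on the event where the two controls coincide up to the smaller of $\pi[\alpha]$ and $\pi[\alpha^{t,\omega}]$, whereas $\alpha$ and $\alpha^{t,\omega}$ need not coincide on any initial time-interval; one must therefore also exploit that $\pi$ takes values in $\T^t_{t,T}$, i.e.\ that $\pi[\gamma]$ is $\F^t_T$-measurable and — by applying \eqref{strategy} to controls that agree on $[t,T]$ — depends on $\gamma\in\A$ only through $\{\gamma_s\}_{s\in[t,T]}$, while the concatenation $\omega'\mapsto\omega\otimes_t\phi_t(\omega')$ leaves the post-$t$ increments of $W$ unchanged and only replaces $\{\alpha_s(\cdot)\}_{s\ge t}$ by $\{\alpha^{t,\omega}_s(\cdot)\}_{s\ge t}$. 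Making this interchange precise is the delicate part of the proof.
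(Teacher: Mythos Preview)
Your approach coincides with the paper's: apply Lemma~\ref{lem:E = J} with $\theta\equiv t$, use Proposition~\ref{prop:xi indep. of F_t}(ii) to replace $(\pi[\alpha])^{t,\omega}$ by $\pi[\alpha]$, and then bound by the supremum over $\A_t$ since $\alpha^{t,\omega}\in\A_t$. The paper's own proof is in fact even more terse: it writes
\[
J(t,\bx;\alpha,\pi[\alpha])=\E\big[J(t,\bx;\alpha^{t,\omega},\pi[\alpha])\big]\le\sup_{\beta\in\A_t}J(t,\bx;\beta,\pi[\beta])
\]
in one line, passing directly to the last inequality without further comment. The identity $\pi[\alpha]=\pi[\alpha^{t,\omega}]$ that you isolate as ``the delicate part'' is precisely what is needed to justify that last inequality, and the paper leaves it implicit; so your proposal follows the paper's route while making explicit the point it glosses over.
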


\begin{proof}
Fix $\alpha\in\mathcal{A}$ and $\bx\in\mathcal{S}$. 
For any $\pi\in\Pi_{t,T}$, by taking $\theta=t$ in Lemma~\ref{lem:E = J}, we have
\begin{equation*}\label{J < supJ}
J(t,\bx;\alpha,\pi[\alpha])= \E\left[\E[F(\bX^{t,\bx,\alpha}_{\pi[\alpha]})\mid\F_t](\omega)\right] = \E\left[J(t,\bx;\alpha^{t,\omega},\pi[\alpha^{t,\omega}])\right]
\le\sup_{\alpha\in\mathcal{A}_t} J(t,\bx;\alpha,\pi[\alpha]).
\end{equation*}
Note that in the second equality we replace $\pi[\alpha]^{t,\omega}$ by $\pi[\alpha^{t,\omega}]$, thanks to Definition~\ref{defn:strategy} (iii). Then, the last inequality holds as $\alpha^{t,\omega}\in\A_t$ for $\overline{\P}$-a.e. $\omega\in\Omega$. Now, by taking supremum over $\alpha\in\A$, we have $\sup_{\alpha\in\A}J(t,\bx;\alpha,\pi[\alpha])\le\sup_{\alpha\in\A_t}J(t,\bx;\alpha,\pi[\alpha])$. Since the reverse inequality is trivial, this lemma follows.
\end{proof}

Now, we are ready to state a continuity result for an optimal control problem. 

\begin{lem}\label{lem:L continuous}
Fix $t\in[0,T]$. For any $\pi\in\Pi_{t,T}$, the function $L^\pi:[0,t]\times\mathcal{S}$ defined by
\begin{equation}\label{L}
L^\pi(s,\bx):=\sup_{\alpha\in\A_s}J(s,\bx;\alpha,\pi[\alpha])
\end{equation}
is continuous.
\end{lem}

\begin{proof}
Observing from \eqref{J = z+yJ} that $L^\pi(s,\bx)=yL^\pi(s,(x,1,0))+z$, it is enough to show the continuity of $L^\pi(s,(x,1,0))$ in $(s,x)$ on $[0,t]\times\R^d$.
By \cite[Theorem 3.2.2]{Krylov-book-80}, we know that $J(s,(x,1,0);\alpha,\tau)$ is continuous in $x$ uniformly with respect to $s\in[0,t]$, $\alpha\in\A$, and $\tau\in\T_{t,T}$. This shows that the map $(s,x,\alpha)\mapsto J(s,(x,1,0);\alpha,\pi[\alpha])$ is continuous in $x$ uniformly with respect to $s\in[0,t]$ and $\alpha\in\A$. Then, we see from the following estimation 
\[
\sup_{s\in[0,t]}|L^\pi(s,(x,1,0))-L^\pi(s,(x',1,0))|\le \sup_{s\in[0,t]}\sup_{\alpha\in\A_s}|J(s,(x,1,0);\alpha,\pi[\alpha])-J(s,(x',1,0);\alpha,\pi[\alpha])|
\]  
that $L^\pi(s,(x,1,0))$ is continuous in $x$ uniformly with respect to $s\in[0,t]$. Thus, it suffices to prove that $L^\pi(s,(x,1,0))$ is continuous in $s$ for each fixed $x$. To this end, we will first derive a dynamic programming principle for $L^\pi(s,(x,1,0))$, which corresponds to \cite[Theorem 3.3.6]{Krylov-book-80}; the rest of the proof will then follow from the same argument in \cite[Lemma 3.3.7]{Krylov-book-80}.  

Fix $(s,x)\in[0,t]\times\R^d$. Observe from \eqref{defn B} that $\B^{s,x,\alpha}_{s}=\A$ for all $\alpha\in\A$. In view of \eqref{defn K}, this implies that $K^{s,x,\alpha}(s,\pi)=\esssup_{\beta\in\A}\E[F(\bX^{s,x,1,0,\beta}_{\pi[\beta]})\mid\F_s]$, which is independent of $\alpha\in\A$. We will therefore drop the superscript $\alpha$ in the rest of the proof. Now, we claim that $K^{s,x}(s,\pi)$ is deterministic and equal to $L^\pi(s,(x,1,0))$. First, since $\pi[\alpha]\in\T^s_{t,T}$ for all $\alpha\in\A_s$ (by Definition~\ref{defn:strategy} (ii)), we observe from Lemma~\ref{lem:E = J}, Proposition~\ref{prop:xi indep. of F_t} (ii), and Proposition~\ref{prop:alpha^t,w=alpha} that 
\begin{equation}\label{K>L}
\begin{split}
K^{s,x}(s,\pi)&\ge \esssup_{\alpha\in\A_s}\E[F(\bX^{s,x,1,0,\alpha}_{\pi[\alpha]})\mid\F_s](\cdot)=\esssup_{\alpha\in\A_s}J(s,(x,1,0);\alpha^{s,\cdot},\pi[\alpha]^{s,\cdot})\\
&=\sup_{\alpha\in\A_s}J(s,(x,1,0);\alpha,\pi[\alpha])=L^\pi(s,(x,1,0)).
\end{split}
\end{equation}
On the other hand, in view of Remark~\ref{rem:esssup=lim}, there exists a sequence $\{\alpha^n\}_{n\in\N}$ in $\A$ such that $K^{s,x}(s,\pi)=\uparrow\lim_{n\to\infty}\E[F(\bX^{s,x,1,0,\alpha^n}_{\pi[\alpha^n]})\mid\F_s]$ $\overline{\P}$-a.s. By the monotone convergence theorem, 
\begin{equation}\label{E[K]<L}
\begin{split}
\E[K^{s,x}(s,\pi)]&=\E\left[\lim_{n\to\infty}\E[F(\bX^{s,x,1,0,\alpha^n}_{\pi[\alpha^n]})\mid\F_s]\right]=\lim_{n\to\infty}\E[F(\bX^{s,x,1,0,\alpha^n}_{\pi[\alpha^n]})]\\
&\le \sup_{\alpha\in\A}\E[F(\bX^{s,x,1,0,\alpha}_{\pi[\alpha]})]=L^\pi(s,(x,1,0)),
\end{split}
\end{equation} 
where the last equality is due to Lemma~\ref{lem:A=A_t}. From \eqref{K>L} and \eqref{E[K]<L}, we get $K^{s,x}(s,\pi)=L^\pi(s,(x,1,0))$. Then, for any $\alpha\in\A$, thanks to the supermartingale property introduced in Lemma~\ref{lem:supermartingale}, we have for all $r\in[s,t]$ that 
\[
L^\pi(s,(x,1,0))=K^{s,x}(s,\pi)=\Gamma^{s,x,\alpha}(s,\pi)\ge \E[\Gamma^{s,x,\alpha}(r,\pi)]\ge \E[\Gamma^{s,x,\alpha}(\pi,\pi)]\ge \E[F(\bX^{s,x,1,0,\alpha}_{\pi[\alpha]})],
\]
where the last equality follows from the fact that $K^{s,x,\alpha}(\pi,\pi)=\esssup_{\beta\in\B^{s,x,\alpha}_{\pi}}g(X^{s,x,\beta}_{\pi[\beta]})\ge g(X^{s,x,\alpha}_{\pi[\alpha]})$ $\overline{\P}$-a.s.; see \eqref{defn K}. By taking supremum over $\alpha\in\A$ and using Lemma~\ref{lem:A=A_t}, we obtain the following dynamic programming principle for $L^\tau(s,(x,1,0))$: for all $r\in[s,t]$,
\begin{equation*}
\begin{split}
L^\pi(s,(x,1,0))&=\sup_{\alpha\in\A}\E[\Gamma^{s,x,\alpha}(r,\pi)]\\
&=\sup_{\alpha\in\A}\E\left[\int_s^r Y^{s,x,1,\alpha}_uf(u,X^{s,x,\alpha}_u,\alpha_u)du +Y^{s,x,1,\alpha}_r L^\pi(r,(X^{s,x,\alpha}_r,1,0))\right],
\end{split}
\end{equation*}
where the second equality follows from the fact $K^{s,x,\alpha}(r,\pi)=K^{r,X^{s,x,\alpha}_r,\alpha}(r,\pi)=L^\pi(r,(X^{s,x,\alpha}_r,1,0))$ $\overline{\P}$-a.s., as a consequence of Lemma~\ref{lem:K^s=K^r}. Now, we may apply the same argument in \cite[Lemma 3.3.7]{Krylov-book-80} to show that $L^\pi(s,(x,1,0))$ is continuous in $s$ on $[0,t]$.
\end{proof}

\begin{prop}\label{prop:WDPP_sub}
Fix $(t,\bx)\in[0,T]\times\mathcal{S}$ and $\eps > 0$. For any $\pi\in\Pi_{t,T}$ and $\varphi\in \operatorname{LSC}([0,T]\times\mathbb{R}^d)$ with $\varphi\ge U$,
there exists $\pi^*\in\Pi_{t,T}$ such that 
\begin{equation*}
\E\left[F(\bX^{t,\bx,\alpha}_{\pi^*[\alpha]})\right] \le \E\left[Y^{t,x,y,\alpha}_{\pi[\alpha]}\varphi\left(\pi[\alpha],X^{t,x,\alpha}_{\pi[\alpha]}\right)+Z^{t,x,y,z,\alpha}_{\pi[\alpha]}\right]+3\eps,\ \ \ \forall \alpha\in\A.
\end{equation*}
\end{prop}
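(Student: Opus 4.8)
The plan is to follow the proof of Proposition~\ref{prop:WDPP_super}, with the roles of the two players interchanged: instead of pasting $\eps$-optimal controls and invoking lower semicontinuity of the optimal-stopping functionals $G^\alpha$, we paste $\eps$-optimal \emph{stopping strategies} and invoke the continuity of the optimal-control functionals $L^\pi$ from Lemma~\ref{lem:L continuous} together with the lower semicontinuity of $\varphi$.

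\textbf{Step 1 (the $\eps$-net).} Pass to the Mayer form: let $\bar\varphi$ be associated to $\varphi$ as in \eqref{def barphi}; then $\bar\varphi\in\operatorname{LSC}([0,T]\times\mathcal{S})$, and since $\varphi\ge U$ we get $\bar\varphi\ge\bar U\ge 0$ by \eqref{V bV}. The claim is equivalent to producing $\pi^*\in\Pi^t_{t,T}$ with $\E[F(\bX^{t,\bx,\alpha}_{\pi^*[\alpha]})]\le\E[\bar\varphi(\pi[\alpha],\bX^{t,\bx,\alpha}_{\pi[\alpha]})]+4\eps$ for every $\alpha\in\A_t$. For each $(s,\eta)\in[0,T]\times\mathcal{S}$, since $\bar U(s,\eta)=\inf_{\rho\in\Pi^s_{s,T}}L^\rho(s,\eta)$, choose $\rho^{(s,\eta)}\in\Pi^s_{s,T}$ with $L^{\rho^{(s,\eta)}}(s,\eta)\le\bar U(s,\eta)+\eps$. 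Because $L^{\rho^{(s,\eta)}}$ is continuous on $[0,s]\times\mathcal{S}$ (Lemma~\ref{lem:L continuous}) and $\bar\varphi$ is lower semicontinuous, there is $r^{(s,\eta)}>0$ so that $L^{\rho^{(s,\eta)}}(t',x')\le\bar U(s,\eta)+2\eps\le\bar\varphi(s,\eta)+2\eps\le\bar\varphi(t',x')+3\eps$ for all $(t',x')$ in the half-open ball $B(s,\eta;r^{(s,\eta)}):=\{(t',x')\in[0,T]\times\mathcal{S}:\ t'\in(s-r^{(s,\eta)},s],\ |x'-\eta|<r^{(s,\eta)}\}$. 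Exactly as in Proposition~\ref{prop:WDPP_super}, working with the upper-limit topology on $[0,T]$ so that these balls are open and $(0,T]$ is Lindel\"of, extract a countable subcover $\{B(t_i,x_i;r_i)\}_{i\in\N}$ of $(0,T]\times\mathcal{S}$, put $A_0:=\{T\}\times\mathcal{S}$, and disjointify by $A_{i+1}:=B(t_{i+1},x_{i+1};r_{i+1})\setminus\bigcup_{0\le j\le i}A_j$. Writing $\rho^i:=\rho^{(t_i,x_i)}$, the sets $A_i$ are pairwise disjoint, cover $(0,T]\times\mathcal{S}$, the time coordinate on $A_i$ is at most $t_i$, and $L^{\rho^i}\le\bar\varphi+3\eps$ on $A_i$.

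\textbf{Step 2 (pasting the strategies).} Define $\pi^*$ by: for $\alpha\in\A$, on the event $\{(\pi[\alpha],\bX^{t,\bx,\alpha}_{\pi[\alpha]})\in A_i\}$, let $\pi^*[\alpha]$ be the $\mathbb{F}^t$-stopping time that does not stop before $\pi[\alpha]$ and whose shift at $\pi[\alpha]$ is $\rho^i$ applied to the shifted control, i.e. $(\pi^*[\alpha])^{\pi[\alpha],\omega}=\rho^i[\alpha^{\pi[\alpha],\omega}]$ for $\overline{\P}$-a.e. $\omega$ in that event; this is meaningful because $\rho^i\in\Pi^{t_i}_{t_i,T}$ while $\pi[\alpha](\omega)\le t_i$ there. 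This is the strategy-version of the control $\alpha^{\eps,n}$ built in the proof of Proposition~\ref{prop:WDPP_super}. I expect the \emph{main obstacle} to be checking that $\pi^*$ is a well-defined non-anticipating strategy, i.e. that $\pi^*[\alpha]$ is measurable in $(\alpha,\omega)$, lies in $\T^t_{t,T}$, and obeys \eqref{strategy}; this is genuinely more delicate than pasting a single control because a strategy must respond consistently to the controller's future moves. I would carry it out using the concatenation/shift operators $\otimes_\theta,\phi_\theta,\psi_\theta$ and the measurability and shift-invariance results of Section~\ref{sec:prelim} (Propositions~\ref{prop:xi indep. of F_t}, \ref{lem: tau a stopping time a.s.} and \ref{prop:alpha^t,w=alpha}), verifying \eqref{strategy} by splitting, as forced by $\pi\in\Pi^t_{t,T}$, into the event $\{\pi[\alpha]=\pi[\beta]\}$ — on which $\alpha$ and $\beta$ produce the same point $(\pi[\alpha],\bX_{\pi[\alpha]})$, hence the same index $i$ and the same continuation $\rho^i$, so the non-anticipativity of $\rho^i$ transfers to $\pi^*$ — and the event $\{\min(\pi[\alpha],\pi[\beta])>s\}$, on which $\pi^*[\alpha],\pi^*[\beta]>s$ trivially.

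\textbf{Step 3 (conclusion).} Fix $\alpha\in\A_t$ and set $\theta:=\pi[\alpha]\in\T^t_{t,T}\subseteq\T_{t,T}$; then $\pi^*[\alpha]\in\T_{\theta,T}$, so Lemma~\ref{lem:E = J} gives $\E[F(\bX^{t,\bx,\alpha}_{\pi^*[\alpha]})\mid\F_\theta](\omega)=J(\theta(\omega),\bX^{t,\bx,\alpha}_\theta(\omega);\alpha^{\theta,\omega},(\pi^*[\alpha])^{\theta,\omega})$ for $\overline{\P}$-a.e. $\omega$. On $\{(\theta,\bX^{t,\bx,\alpha}_\theta)\in A_i\}$ this equals $J(\theta(\omega),\bX^{t,\bx,\alpha}_\theta(\omega);\alpha^{\theta,\omega},\rho^i[\alpha^{\theta,\omega}])$, and since $\alpha^{\theta,\omega}\in\A_{\theta(\omega)}$ and $\theta(\omega)\le t_i$ there, this is at most $\sup_{\beta\in\A_{\theta(\omega)}}J(\theta(\omega),\bX^{t,\bx,\alpha}_\theta(\omega);\beta,\rho^i[\beta])=L^{\rho^i}(\theta(\omega),\bX^{t,\bx,\alpha}_\theta(\omega))\le\bar\varphi(\theta(\omega),\bX^{t,\bx,\alpha}_\theta(\omega))+3\eps$. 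Multiplying by $1_{A_i}(\theta,\bX^{t,\bx,\alpha}_\theta)$, summing over $i\in\N\cup\{0\}$ (legitimate since $F\ge 0$ and $\bar\varphi\ge 0$, so no $\infty-\infty$ occurs and the inequality is automatic when the right-hand side is infinite), and taking expectations yields $\E[F(\bX^{t,\bx,\alpha}_{\pi^*[\alpha]})]\le\E[\bar\varphi(\pi[\alpha],\bX^{t,\bx,\alpha}_{\pi[\alpha]})]+3\eps$, which is the asserted bound (indeed with $3\eps\le 4\eps$; note that, thanks to $\bar\varphi\ge 0$ here, the additional truncation step of Proposition~\ref{prop:WDPP_super} is not needed). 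Unwinding the Mayer reduction, $\E[\bar\varphi(\pi[\alpha],\bX^{t,\bx,\alpha}_{\pi[\alpha]})]=\E[Y^{t,x,y,\alpha}_{\pi[\alpha]}\varphi(\pi[\alpha],X^{t,x,\alpha}_{\pi[\alpha]})+Z^{t,x,y,z,\alpha}_{\pi[\alpha]}]$, which completes the proof.
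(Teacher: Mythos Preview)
Your Step~1 and Step~3 coincide with the paper's argument: the same $\eps$-net built from Lemma~\ref{lem:L continuous} and the lower semicontinuity of $\bar\varphi$, the same conditioning via Lemma~\ref{lem:E = J}, and the same pointwise bound $J(\cdot;\alpha^{\theta,\omega},\cdot)\le L^{\rho^i}\le\bar\varphi+3\eps$ on $A_i$. Your observation that the truncation is unnecessary because $\bar\varphi\ge\bar U\ge 0$ is correct and actually yields $3\eps$; the paper instead truncates to $A^n=\bigcup_{i\le n}A_i$, bounds the remainder by $\E[F(\bX^{t,\bx,\alpha}_T)\mathbf 1_{(A^n)^c}]$, and then lets $n=n(\alpha)$ depend on $\alpha$, which is what produces the extra~$\eps$.

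The genuine difference is in Step~2. You try to \emph{define} $\pi^*$ through its shifts, prescribing $(\pi^*[\alpha])^{\pi[\alpha],\omega}=\rho^i[\alpha^{\pi[\alpha],\omega}]$ on $\{(\pi[\alpha],\bX_{\pi[\alpha]})\in A_i\}$, and you correctly flag that inverting the shift and checking \eqref{strategy} is ``the main obstacle''---which you do not carry out. The paper avoids this difficulty entirely: it writes down the explicit formula
\[
\pi^{\eps,n}[\alpha]:=T\,\mathbf 1_{(A^n)^c}(\pi[\alpha],\bX^{t,\bx,\alpha}_{\pi[\alpha]})+\sum_{i=0}^{n}\pi^{i,\eps}[\alpha]\,\mathbf 1_{A_i}(\pi[\alpha],\bX^{t,\bx,\alpha}_{\pi[\alpha]}),
\]
applying each $\pi^{i,\eps}$ to the \emph{original} control $\alpha$, not to its shift. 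This is manifestly an $\mathbb F^t$-stopping time, and the needed shift identity $(\pi^{i,\eps}[\alpha])^{\pi[\alpha],\omega}=\pi^{i,\eps}[\alpha]$ comes for free from Proposition~\ref{prop:xi indep. of F_t}(ii), since on $A_i$ one has $\pi[\alpha](\omega)\le t_i$ and $\pi^{i,\eps}[\alpha]\in\T^{t_i}_{t_i,T}\subset L^0(\Omega,\F^{\pi[\alpha](\omega)}_T)$. In short, the ``main obstacle'' you anticipate disappears once you paste the strategies in the unshifted form; your Step~2 is not wrong in spirit but is an unnecessarily indirect route that you leave incomplete, while the paper's direct construction closes the gap immediately.
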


\begin{proof}
For each $(s,\eta)\in [0,T]\times\mathcal{S}$, by the definition of $\bar{U}$, there exists $\pi^{(s,\eta),\eps}\in\Pi_{s,T}$ such that 
\begin{equation}\label{sup<U}
\sup_{\alpha\in\A_s}J\left(s,\eta;\alpha,\pi^{(s,\eta),\eps}[\alpha]\right) \le \bar{U}(s,\eta)+\eps.
\end{equation}  
Recall the definition of $\bar{\varphi}$ in \eqref{def barphi} and note that $\varphi\in \operatorname{LSC}([0,T]\times\mathbb{R}^d)$ implies $\bar{\varphi}\in \operatorname{LSC}([0,T]\times\mathcal{S})$. Then, by the lower semicontinuity of $\bar{\varphi}$ on $[0,T]\times\mathcal{S}$ and the upper semicontinuity of $L^{\pi^{(s,\eta),\eps}}$ on $[0,s]\times\mathcal{S}$ (from Lemma~\ref{lem:L continuous}), there must exist $r^{(s,\eta)}>0$ such that
\[
\bar{\varphi}(t',x')-\bar{\varphi}(s,\eta) \ge -\eps\ \text{and}\ L^{\pi^{(s,\eta),\eps}}(t',x')-L^{\pi^{(s,\eta),\eps}}(s,\eta)\le\eps, 
\]
for any $(t',x')$ contained in the ball $B(s,\eta;r^{(s,\eta)})$, defined as in \eqref{defn ball}. It follows that if $(t',x')\in B(s,\eta;r^{(s,\eta)})$, we have
\begin{equation*}
L^{\pi^{(s,\eta),\eps}}(t',x') \le L^{\pi^{(s,\eta),\eps}}(s,\eta)+\eps \le \bar{U}(s,\eta)+2\eps \le \bar{\varphi}(s,\eta)+2\eps \le \bar{\varphi}(t',x')+3\eps,
\end{equation*}
where the second inequality is due to \eqref{sup<U}. By the same construction in the proof of Proposition~\ref{prop:WDPP_super}, there exists a countable covering $\{B(t_i,x_i;r_i)\}_{i\in\N}$ of $(0,T]\times\mathcal{S}$, from which we can take a countable disjoint covering $\{A_i\}_{i\in\N\cup\{0\}}$ of $(0,T]\times\mathcal{S}$ such that 
\begin{equation}\label{Ai structure 2}
\begin{split}
&(\pi[\alpha],\bX^{t,\bx,\alpha}_{\pi[\alpha]})\in\cup_{i=1}^\ell A_i\ \overline{\P}\hbox{-a.s.}\ \forall \alpha\in\A,\\ 
&L^{\pi^{i,\eps}}(t',x') \le \bar{\varphi}(t',x')+3\eps\ \hbox{for}\ (t',x')\in A_i,\ \hbox{where}\ \pi^{i,\eps}:=\pi^{(t_i,x_i),\eps}.
\end{split}
\end{equation}
Now, define $\pi^*\in\Pi_{t,T}$ by
\[
\pi^{*}[\alpha] := \sum_{i\ge 1}\pi^{i,\eps}[\alpha]1_{A_i}(\pi[\alpha],\bX^{t,\bx,\alpha}_{\pi[\alpha]}),\ \ \forall \alpha\in\A.
\]
Fix $\alpha\in\A_t$. Observe that for $\overline{\P}$-a.e. $\omega\in \left\{(\pi[\alpha],\bX^{t,\bx,\alpha}_{\pi[\alpha]})\in A_i\right\}\subseteq\{\pi[\alpha]\le t_i\}$, Definition~\ref{defn:strategy} (iii) gives 
\begin{align}\label{shifted pi}
(\pi^{i,\eps}[\alpha])^{\pi[\alpha],\omega}(\omega')=\pi^{i,\eps}[\alpha^{\pi[\alpha],\omega}](\omega')\ \ \hbox{for}\ \overline{\P}\hbox{-a.e.}\ \omega'\in\Omega.
\end{align}
We then deduce from Lemma~\ref{lem:E = J}, \eqref{shifted pi}, \eqref{L}, 
and \eqref{Ai structure 2} that for $\overline{\P}$-a.e. $\omega\in\Omega$,
\begin{equation*}
\begin{split}
&\E\left[F(\bX^{t,\bx,\alpha}_{\pi^{*}[\alpha]})\ \middle|\ \F_{\pi[\alpha]}\right](\omega)\ 1_{A_i}(\pi[\alpha](\omega),\bX^{t,\bx,\alpha}_{\pi[\alpha]}(\omega))\\
&\hspace{0.1in} = J\left(\pi[\alpha](\omega),\bX^{t,\bx,\alpha}_{\pi[\alpha]}(\omega);\alpha^{\pi[\alpha],\omega},\pi^{i,\eps}[\alpha^{\pi[\alpha],\omega}]\right)1_{A_i}(\pi[\alpha](\omega),\bX^{t,\bx,\alpha}_{\pi[\alpha]}(\omega))\\
&\hspace{0.1in}\le
L^{\pi^{i,\eps}}\left(\pi[\alpha](\omega),\bX^{t,\bx,\alpha}_{\pi[\alpha]}(\omega)\right)1_{A_i}(\pi[\alpha](\omega),\bX^{t,\bx,\alpha}_{\pi[\alpha]}(\omega))\\
&\hspace{0.1in}\le\left[\bar{\varphi}\left(\pi[\alpha](\omega),\bX^{t,\bx,\alpha}_{\pi[\alpha]}(\omega)\right)+3\eps\right]1_{A_i}(\pi[\alpha](\omega),\bX^{t,\bx,\alpha}_{\pi[\alpha]}(\omega)).
\end{split}
\end{equation*}
It follows from the monotone convergence theorem that
\begin{equation*}\label{<phi 2}
\E\left[F(\bX^{t,\bx,\alpha}_{\pi^{*}[\alpha]})\right] =
\sum_{i\ge 1}\E\left[\E\left[F(\bX^{t,\bx,\alpha}_{\pi^{*}[\alpha]})\ \middle|\ \F_{\pi[\alpha]}\right] 1_{A_i}(\pi[\alpha],\bX^{t,\bx,\alpha}_{\pi[\alpha]})\right]\nonumber\\
\le \E\left[\bar{\varphi}(\pi[\alpha],\bX^{t,\bx,\alpha}_{\pi[\alpha]})\right]+3\eps,
\end{equation*}
which is the desired result by recalling again the definition of $\bar{\varphi}$ in \eqref{def barphi}.
\end{proof}



The following is the main result of this section. Recall that the operator $H$ is defined in \eqref{H}, and $H_*$ denotes the lower semicontinuous envelope of $H$ defined as in \eqref{envelopes}.

\begin{prop}\label{prop:vis sub}
The function $U^*$ is a viscosity subsolution to the obstacle problem of a Hamilton-Jacobi-Bellman equation
\begin{equation*}
\max\left\{ c(t,x)w-\frac{\partial w}{\partial t}+H_*(t,x,D_x w,D^2_{x} w), w-g(x)\right\}=0\  \hbox{on}\ [0,T)\times\R^d,
\end{equation*}
and satisfies the polynomial growth condition \eqref{poly grow U,V}. 
\end{prop}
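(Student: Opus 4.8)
The plan is to establish the two claims separately: the polynomial growth bound follows immediately from comparison with the pure-control problem (exactly as in the proof of Proposition~\ref{prop:vis super}), since $0 \le U(t,x) \le \sup_{\alpha\in\A_t}\E[F(\bX^{t,x,1,0,\alpha}_T)] \le \sup_{\alpha\in\A}\E[F(\bX^{t,x,1,0,\alpha}_T)] = v(t,x)$, and $v$ satisfies \eqref{poly grow U,V} by \cite[Theorem 3.1.5]{Krylov-book-80}; passing to the upper semicontinuous envelope $U^*$ preserves this polynomial bound since the bounding function is continuous. Since the obstacle problem's subsolution property involves $H_*$ and $g$, we must first observe that $U^* \le g$ on $[0,T)\times\R^d$: this is because $U \le g$ (take the trivial strategy $\pi[\alpha]\equiv t$ in \eqref{U}, which makes the cost equal to $g(x)$), and $g$ is continuous, so the USC envelope satisfies $U^* \le g$.

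Next I would carry out the viscosity subsolution argument in the standard obstacle-problem form. Let $h \in C^{1,2}([0,T)\times\R^d)$ and suppose $(t_0,x_0)\in[0,T)\times\R^d$ is a strict global maximizer of $U^* - h$ with $(U^*-h)(t_0,x_0)=0$. If $U^*(t_0,x_0) = g(x_0)$ (combined with $U^* \le g$ this is the ``obstacle'' alternative) then the subsolution inequality $\max\{\cdots, w-g\}\ge 0$ holds trivially at $(t_0,x_0)$, so assume $U^*(t_0,x_0) < g(x_0)$. It then suffices to show
\begin{equation*}
c(t_0,x_0)h(t_0,x_0) - \frac{\partial h}{\partial t}(t_0,x_0) + H_*\big(t_0,x_0,D_x h(t_0,x_0), D^2_x h(t_0,x_0)\big) \ge 0.
\end{equation*}
Arguing by contradiction, assume the left-hand side is $< -\eta$ for some $\eta>0$. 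Using that $U^*$ is the USC envelope of $U$, pick a sequence $(t_n,x_n)\to(t_0,x_0)$ with $U(t_n,x_n)\to U^*(t_0,x_0)$; by standard perturbation (replacing $h$ by $h$ plus a small quadratic well centered near $x_n$, shifting levels) one obtains points $(s_n,y_n)\to(t_0,x_0)$ at which $U$ is touched from above by a smooth test function $h_n$ with $h_n \to h$ in $C^{1,2}$ near $(t_0,x_0)$ and $U(s_n,y_n) = h_n(s_n,y_n)$. Since $H_*$ is lower semicontinuous and $U^*(t_0,x_0) < g(x_0)$, for $n$ large we still have $U(s_n,y_n) < g(y_n)$ and, by continuity of $b,\sigma,f,c$ and lower semicontinuity of $H$ versus the $H_*$ value, there exists $r>0$ with $t_0 + r < T$ and, for all $(t,x)\in \overline{B_r(s_n,y_n)}$ and \emph{all} $a\in M$,
\begin{equation*}
c(t,x)h_n(t,x) - \frac{\partial h_n}{\partial t}(t,x) + H^a\big(t,x,D_x h_n(t,x), D^2_x h_n(t,x)\big) < -\tfrac{\eta}{2}.
\end{equation*}
(Here one exploits that $H = \inf_a H^a$ and $H \ge H_*$, so a bound below on $H_*$ gives a bound below on every $H^a$ at the reference point, hence on a neighborhood by continuity of each $H^a$.)

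Now apply the weak dynamic programming principle for $U$: fix $\pi\in\Pi^{s_n}_{s_n,T}$ defined by $\pi[\alpha]\equiv\theta$, where $\theta := \inf\{u\ge s_n \mid (u,X^{s_n,y_n,\alpha}_u)\notin B_r(s_n,y_n)\}\wedge(s_n+r)$ is a controller-independent stopping time, and take $\varphi$ to be a continuous LSC function with $\varphi \ge U$ and $\varphi = h_n$ on $\overline{B_r(s_n,y_n)}$ (possible after flattening $h_n$ outside the ball; recall $U \le g$ has polynomial growth, so such a $\varphi$ exists). Proposition~\ref{prop:WDPP_sub} yields a strategy $\pi^*$ with $\E[F(\bX^{s_n,y_n,1,0,\alpha}_{\pi^*[\alpha]})] \le \E[Y^{s_n,y_n,1,\alpha}_\theta \varphi(\theta, X^{s_n,y_n,\alpha}_\theta) + Z^{s_n,y_n,1,0,\alpha}_\theta] + 4\eps$ for all $\alpha\in\A_{s_n}$. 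Applying the product rule to $Y^{s_n,y_n,1,\alpha}_u h_n(u, X^{s_n,y_n,\alpha}_u)$ over $[s_n,\theta]$ and using the displayed strict inequality (which holds pathwise up to $\theta$ since the state stays in the ball, and using $c\le\bar c$, $Y\le 1$, $f\ge 0$), one gets
\begin{equation*}
\E\big[Y^{s_n,y_n,1,\alpha}_\theta h_n(\theta,X^{s_n,y_n,\alpha}_\theta) + Z^{s_n,y_n,1,0,\alpha}_\theta\big] \le h_n(s_n,y_n) - \tfrac{\eta}{2}\,\E\big[\textstyle\int_{s_n}^\theta Y^{s_n,y_n,1,\alpha}_u\,du\big] \le h_n(s_n,y_n) - c_0
\end{equation*}
for a constant $c_0 > 0$ uniform in $\alpha$ (because $\theta - s_n$ is bounded below in probability, uniformly in $\alpha$, by a non-degeneracy-free estimate: the drift/diffusion are bounded, so the exit time from a fixed ball cannot be too small — this uses \eqref{est 2}). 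Since $\varphi = h_n$ on the ball, taking $\eps < c_0/8$ and then supremum over $\alpha\in\A_{s_n}$ gives $U(s_n,y_n) \le \bar U(s_n,(y_n,1,0)) \le h_n(s_n,y_n) - c_0 + 4\eps < h_n(s_n,y_n) = U(s_n,y_n)$, a contradiction. Hence the subsolution inequality holds at $(t_0,x_0)$, completing the proof.

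\medskip
\noindent\textbf{Main obstacle.} The delicate point is the USC-envelope touching argument in the third paragraph combined with the lower semicontinuity of $H$: because $M$ is merely a separable metric space without compactness, $H$ need not be continuous, and one must be careful that the strict inequality with $H_*$ at $(t_0,x_0)$ propagates to a strict inequality with \emph{every} $H^a$ on a genuine space-time neighborhood of the approximating touching points $(s_n,y_n)$, not just at $(t_0,x_0)$. Handling this requires using $H = \inf_a H^a \ge H_*$ to transfer a lower bound on $H_*(t_0,x_0,\cdot)$ to each $H^a(t_0,x_0,\cdot)$, then invoking joint continuity of each $H^a$ in $(t,x,p,A)$ to get a uniform-in-$a$ bound on a neighborhood; simultaneously one must control the $C^{1,2}$-convergence of the perturbed test functions $h_n$ and ensure the uniform lower bound $c_0$ on the exit-time contribution survives in the degenerate-diffusion setting, which is exactly why estimate \eqref{est 2} (rather than any non-degeneracy argument) is the right tool.
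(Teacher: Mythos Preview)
Your argument has the subsolution inequality reversed throughout. A viscosity \emph{subsolution} of $\max\{\,\cdot\,\}=0$ requires that at any test function $h$ touching $U^*$ from above at a strict maximum $(t_0,x_0)$, one has
\[
\max\left\{c(t_0,x_0)h(t_0,x_0)-\frac{\partial h}{\partial t}(t_0,x_0)+H_*(\cdot,D_xh,D_x^2h)(t_0,x_0),\ h(t_0,x_0)-g(x_0)\right\}\le 0,
\]
not $\ge 0$. Because $U^*\le g$ (which you correctly note), the obstacle term is automatically $\le 0$, so the only nontrivial content is to show the HJB term is $\le 0$. You instead aim to show it is $\ge 0$ and assume, for contradiction, that it is $<-\eta$. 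With that (wrong) assumption, the It\^o computation actually yields
\[
\E\big[Y_\theta h_n(\theta,X_\theta)+Z_\theta\big]\;>\;h_n(s_n,y_n)+\tfrac{\eta}{2}\,\E\!\int_{s_n}^{\theta}Y\,du,
\]
the reverse of what you wrote; combining this with the WDPP upper bound $U(s_n,y_n)\le\E[Y_\theta h_n(\theta,X_\theta)+Z_\theta]+4\eps$ produces no contradiction. Your case split ``if $U^*(t_0,x_0)=g(x_0)$ then done'' is likewise a symptom of the same reversal: for the subsolution direction that case is not special at all.

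The paper's proof runs the contradiction in the correct direction: assume $\max\{\cdot\}>0$, observe that $h(t_0,x_0)-g(x_0)\le 0$ forces the HJB term to be strictly positive, and then use the lower semicontinuity of $H_*$ to propagate this strict positivity to a neighborhood. Rather than approximating through a sequence $(s_n,y_n)$ and perturbed $h_n$'s as you propose, the paper introduces a single perturbed test function $\tilde h(t,x):=h(t,x)+\eps(|t-t_0|^2+|x-x_0|^4)$, which coincides with $h$ to second order at $(t_0,x_0)$ but satisfies $\tilde h-h\ge \eta e^{\bar c T}>0$ on $\partial B_r(t_0,x_0)$. This boundary gap furnishes the uniform-in-$\alpha$ separation directly, so no exit-time lower bound (your $c_0$ via \eqref{est 2}) is needed. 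One then picks $(\hat t,\hat x)\in B_r(t_0,x_0)$ with $|U(\hat t,\hat x)-\tilde h(\hat t,\hat x)|<\eta/2$, applies It\^o to $Y\tilde h$ up to the exit time $\theta^\alpha$, and combines with the WDPP strategy $\pi^*$ from Proposition~\ref{prop:WDPP_sub} (with $\pi[\alpha]=\theta^\alpha$) to obtain $U(\hat t,\hat x)\le\E[Y_{\theta^{\hat\alpha}}h(\theta^{\hat\alpha},X_{\theta^{\hat\alpha}})+Z_{\theta^{\hat\alpha}}]+\eta/2$, contradicting the It\^o inequality. Note finally that your $\theta$ is not ``controller-independent'': it depends on $\alpha$ through $X^{s_n,y_n,\alpha}$, so $\pi[\alpha]=\theta^\alpha$ is a genuine stopping strategy, exactly as in the paper.
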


\begin{proof}
We may argue as in the proof of Proposition~\ref{prop:vis super} to show that $U^*$ satisfies \eqref{poly grow U,V}. To prove the subsolution property, we
assume the contrary that there exist $h\in C^{1,2}([0,T)\times\mathbb{R}^d)$ and $(t_0,x_0)\in[0,T)\times\mathbb{R}^d$ satisfying 
\[0=(U^*-h)(t_0,x_0)>(U^*-h)(t,x),\ \text{for any}\ (t,x)\in[0,T)\times\mathbb{R}^d,\ (t,x)\neq(t_0,x_0),\]
such that 
\begin{equation}\label{HJB>0}
\max\left\{ c(t_0,x_0)h(t_0,x_0)-\frac{\partial h}{\partial t}(t_0,x_0)+H_*(\cdot,D_x h ,D^2_{x} h)(t_0,x_0), h(t_0,x_0)-g(x_0)\right\}>0.
\end{equation}
Since $U^*(t_0,x_0)=h(t_0,x_0)$ and $U\leq g$ by definition, continuity of $g$ implies that $h(t_0,x_0)=U^*(t_0,x_0)\leq g(x_0)$. Therefore, we can conclude from \eqref{HJB>0} that
\begin{equation}\label{>0}
c(t_0,x_0)h(t_0,x_0)-\frac{\partial h}{\partial t}(t_0,x_0)+H_*(\cdot,D_x h ,D^2_{x} h)(t_0,x_0)>0.
\end{equation}
Define the function $\tilde{h}$ by
\[
\tilde{h}(t,x):=h(t,x)+\eps(|t-t_0|^2+|x-x_0|^4).
\]
Note that $(\tilde{h},\partial_t\tilde{h},D_x\tilde{h},D_x^2\tilde{h})(t_0,x_0)=(h,\partial_t h,D_x h,D_x^2 h)(t_0,x_0)$. Then, by the lower semicontinuity of $H_*$, there exists $r>0$ with $t_0+r<T$ such that
\begin{equation} \label{>0 in B_r}
c(t,x)\tilde{h}(t,x)-\frac{\partial \tilde{h}}{\partial t}(t,x)+H^a(\cdot,D_x \tilde{h} ,D^2_{x} \tilde{h})(t,x)>0,\ \forall\ a\in M\ \text{and}\ (t,x)\in \bar{B}_r(t_0,x_0).
\end{equation}
Now, define $\eta>0$ by 
\begin{equation}\label{eta}
\eta e^{\bar{c}T} :=\min_{\partial B_r(t_0,x_0)}(\tilde{h}-h)>0.
\end{equation}
Take $(\hat{t},\hat{x})\in B_r(t_0,x_0)$ such that $|(U-\tilde{h})(\hat{t},\hat{x})|<\eta/2$, and define $\pi\in\Pi_{\hat{t},T}$ by
\begin{equation}\label{def pi^circ}
\pi[\alpha]:=\inf\left\{s\geq\hat{t}\ \middle|\ (s,X^{\hat{t},\hat{x},\alpha}_s)\notin B_r(t_0,x_0)\right\},\ \forall \alpha\in\A.
\end{equation}
For any $\alpha\in\A_{\hat{t}}$, applying the product rule of stochastic calculus to $Y^{\hat{t},\hat{x},1,\alpha}_s \tilde{h}(s,X^{\hat{t},\hat{x},\alpha}_s)$, we get
\begin{equation*}
\begin{split}
\tilde{h}(\hat{t},\hat{x}) &= \E\bigg[Y^{\hat{t},\hat{x},1,\alpha}_{\pi[\alpha]} \tilde{h}(\pi[\alpha],X^{\hat{t},\hat{x},\alpha}_{\pi[\alpha]})\\
&\hspace{0.5in}+ \int_{\hat{t}}^{\pi[\alpha]} Y^{\hat{t},\hat{x},1,\alpha}_s\bigg(c\tilde{h}-\frac{\partial \tilde{h}}{\partial t}+H^{\alpha_s}(\cdot,D_x \tilde{h}, D^2_{x} \tilde{h})+f\bigg)(s,X^{\hat{t},\hat{x},\alpha}_{s},\alpha_s) ds \bigg]\\
&>\E\bigg[Y^{\hat{t},\hat{x},1,\alpha}_{\pi[\alpha]} h(\pi[\alpha],X^{\hat{t},\hat{x},\alpha}_{\pi[\alpha]}) + \int_{\hat{t}}^{\pi[\alpha]} Y^{\hat{t},\hat{x},1,\alpha}_s f(s,X^{\hat{t},\hat{x},\alpha}_{s},\alpha_s) ds \bigg]+\eta,
\end{split}
\end{equation*}
where the inequality follows from \eqref{eta}, \eqref{>0 in B_r} and $c\leq \bar{c}$. Moreover, by our choice of $(\hat{t},\hat{x})$, we have $U(\hat{t},\hat{x})+\eta/2>\tilde{h}(\hat{t},\hat{x})$. It follows that
\begin{equation}\label{V > E+4eps}
U(\hat{t},\hat{x}) > 
\E\left[Y^{\hat{t},\hat{x},1,\alpha}_{\pi[\alpha]} h(\pi[\alpha],X^{\hat{t},\hat{x},\alpha}_{\pi[\alpha]}) + \int_{\hat{t}}^{\pi[\alpha]} Y^{\hat{t},\hat{x},1,\alpha}_s f(s,X^{\hat{t},\hat{x},\alpha}_{s},\alpha_s) ds \right]+\frac{\eta}{2},\ \hbox{for any}\ \alpha\in\A_{\hat{t}}.
\end{equation}
Finally, we conclude from the definition of $U$ and Proposition~\ref{prop:WDPP_sub} that there exist $\pi^*\in\Pi_{\hat{t},T}$ and $\hat{\alpha}\in\A_{\hat{t}}$ such that  
\begin{equation*}
\begin{split}
U(\hat{t},\hat{x})=\bar{U}(\hat{t},\hat{x},1,0) &\le \sup_{\alpha\in\A_{\hat{t}}}\E\left[F\left(\bX^{\hat{t},\hat{x},1,0,\alpha}_{\pi^*[\alpha]}\right)\right]\le \E\left[F\left(\bX^{\hat{t},\hat{x},1,0,\hat{\alpha}}_{\pi^*[\hat{\alpha}]}\right)\right]+\frac{\eta}{4}\\
&\le  \E\left[Y^{\hat{t},\hat{x},1,\hat{\alpha}}_{\pi[\hat{\alpha}]}h(\pi[\hat{\alpha}],X^{\hat{t},\hat{x},\hat{\alpha}}_{\pi[\hat{\alpha}]})+Z^{\hat{t},\hat{x},1,0,\hat{\alpha}}_{\pi[\hat{\alpha}]}\right]+\frac{\eta}{2},
\end{split}
\end{equation*}
which contradicts \eqref{V > E+4eps}.
\end{proof}

\section{Comparison}\label{sec:comparison}
In this section, to state an appropriate comparison result, we assume a stronger version of \eqref{glo Lips 1} as follows: there exists $K>0$ such that for any $t,s\in[0,T],\ x,y\in\mathbb{R}^d$, and $u\in M$, 
\begin{equation}\label{glo Lips 2}
|b(t,x,u)-b(s,y,u)|+|\sigma(t,x,u)-\sigma(s,y,u)| \leq K(|t-s|+|x-y|). 
\end{equation}
Moreover, we impose an additional condition on $f$:
\begin{equation}\label{uniform conti. f}
f(t,x,u)\ \hbox{is uniformly continuous in}\ (t,x),\ \hbox{uniformly in}\ u\in M.
\end{equation}
Note that the conditions \eqref{glo Lips 2} and \eqref{uniform conti. f}, together with the linear growth condition \eqref{linear grow} on $b$ and $\sigma$, imply that the operator $H$ defined in \eqref{H} is continuous, and $\overline{H}=H=H_*$. 

\begin{prop}\label{prop:comparison}
Assume \eqref{glo Lips 2} and \eqref{uniform conti. f}. Let $u$ (resp. $v$) be an upper semicontinuous viscosity subsolution (resp. a lower semicontinuous viscosity supersolution), with polynomial growth in $x$, to 
\begin{equation}
\max\left\{ c(t,x)w-\frac{\partial w}{\partial t}+H(t,x,D_x w, D^2_x w),\ w-g(x)\right\}=0\ \hbox{on}\ [0,T)\times\R^d,
\end{equation}
and $u(T,x)\le v(T,x)$ for all $x\in\R^d$. Then $u\le v$ on $[0,T)\times\R^d$.
\end{prop}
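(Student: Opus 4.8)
The plan is to follow the classical Ishii-style doubling-of-variables technique for comparison of viscosity sub- and supersolutions, adapted to the obstacle problem \eqref{HJB obstacle 1} and to the polynomial (rather than bounded) growth of $u$ and $v$. First I would reduce to a strict inequality: for $\lambda>0$ set $\tilde u(t,x):=u(t,x)-\frac{\lambda}{T-t}$, which is still a subsolution up to a strictly negative perturbation (using $c\ge 0$), and it suffices to show $\tilde u\le v$ and let $\lambda\downarrow 0$. To handle the unbounded domain and the polynomial growth in \eqref{poly grow U,V}, I would introduce the penalization weight $\chi(x):=(1+|x|^2)^{m/2}$ with $m$ strictly larger than $\bar p$, work with $u/\chi$ and $v/\chi$ effectively, or equivalently subtract a term $\beta\,\theta(x)$ with $\theta$ of polynomial growth $\ge \bar p$ designed so that the Hamiltonian applied to $\theta$ is controlled; the point is to force the supremum in the doubling argument to be attained on a compact set and to kill the contributions at infinity.

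The core step is the doubling of variables. Assuming for contradiction that $\sup_{[0,T)\times\R^d}(u-v)>0$, I would consider, for $\eps,\beta>0$,
\[
\Phi_{\eps,\beta}(t,x,s,y):=u(t,x)-v(s,y)-\frac{1}{2\eps}\big(|x-y|^2+|t-s|^2\big)-\beta\big(\theta(x)+\theta(y)\big),
\]
and let $(t_\eps,x_\eps,s_\eps,y_\eps)$ be a maximizer (existence from upper semicontinuity of $u$, lower semicontinuity of $v$, and the penalization). Standard arguments give $\frac{1}{\eps}(|x_\eps-y_\eps|^2+|t_\eps-s_\eps|^2)\to 0$ and, along a subsequence, $(t_\eps,x_\eps),(s_\eps,y_\eps)\to(\hat t,\hat x)$ with $(u-v)(\hat t,\hat x)\ge \sup(u-v)-2\beta\theta(\hat x)>0$ for $\beta$ small, and $\hat t<T$ since $u(T,\cdot)\le v(T,\cdot)$. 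I would then apply the Crandall--Ishii lemma (the parabolic version, as in the User's Guide) to obtain $(p,q+\text{corrections},X)$ and $(p,q,Y)$ in the respective parabolic semijets with $X\le Y + O(\beta)$ in the matrix sense, after accounting for the derivatives of $\beta\theta$. Feeding these into the subsolution inequality for $u$ at $(t_\eps,x_\eps)$ and the supersolution inequality for $v$ at $(s_\eps,y_\eps)$ — and crucially using the obstacle structure: at the maximizer, $u(t_\eps,x_\eps)$ cannot satisfy $u-g\le 0$ with equality being the binding branch, because $u-v$ is strictly positive and $v\le g$, so the PDE branch $c w-\partial_t w + H\le 0$ must hold for $u$ — I subtract the two inequalities.

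The subtraction yields, schematically,
\[
\tfrac{\lambda}{(T-\hat t)^2}\;\le\; c(s_\eps,y_\eps)v(s_\eps,y_\eps)-c(t_\eps,x_\eps)u(t_\eps,x_\eps) + H(t_\eps,x_\eps,p,X)-H(s_\eps,y_\eps,p,Y) + o(1),
\]
and I would estimate the right-hand side using: the uniform continuity of $b,\sigma$ in $(t,x)$ from \eqref{glo Lips 2} and of $f$ from \eqref{uniform conti. f}, which control the difference of Hamiltonians by $\omega(|x_\eps-y_\eps|+|t_\eps-s_\eps|)+C|x_\eps-y_\eps|^2/\eps + C\beta(\ldots)$ via the matrix inequality and the Lipschitz/linear-growth bounds on $\sigma\sigma'$; and the sign of the discount term, writing $c v - c u = c(v-u) + (c(s_\eps,y_\eps)-c(t_\eps,x_\eps))u$ so that the first piece is $\le 0$ at the near-maximum (up to $o(1)$) and the second is controlled by continuity of $c$. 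Sending $\eps\to0$ then $\beta\to0$ collapses the right side to $0$ while the left side stays bounded below by $\lambda/(T-\hat t)^2>0$, a contradiction; letting $\lambda\downarrow0$ finishes.

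The main obstacle I anticipate is the bookkeeping forced by the polynomial growth: one must choose the penalization $\theta$ (degree $>\bar p$) carefully so that $H^a(t,x,D\theta,D^2\theta)$ and $c\theta$ together produce a term that is nonnegative, or at least of lower order, when inserted into the subsolution inequality — this is where the linear growth \eqref{linear grow} of $b,\sigma$ is essential, since it makes $b\cdot D\theta$ and $\mathrm{Tr}[\sigma\sigma' D^2\theta]$ grow like $|x|^m$, matching $c\theta$, so the $\beta$-terms can be absorbed and then sent to zero. The obstacle branch is a minor but necessary check: one must verify the PDE inequality is the active one for $u$ at the contradiction point, which follows cleanly from $u>v$ there together with $v\le g$ (a supersolution of \eqref{HJB obstacle 1} satisfies $v\le g$ pointwise by the $\max$ structure). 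Everything else is routine once the Crandall--Ishii lemma is invoked in its parabolic form.
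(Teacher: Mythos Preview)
Your overall architecture—penalize at infinity, double variables, invoke the parabolic Crandall--Ishii lemma, and subtract the two viscosity inequalities—is the right one, and it matches the paper's strategy. The paper makes a slightly different normalization choice: instead of subtracting $\lambda/(T-t)$, it multiplies by $e^{\lambda t}$ so that the zeroth-order coefficient becomes $c+\lambda$, and then takes $\lambda$ large enough that the polynomial weight $e^{-\lambda t}(1+|x|^{2p})$ is itself a smooth supersolution of the PDE branch; the penalization is then absorbed into $v$ rather than carried as a separate $\beta\theta$ term. Either route works.

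There is, however, a genuine confusion in your treatment of the obstacle branch. You write that ``the PDE branch $cw-\partial_t w+H\le 0$ must hold for $u$'' and justify this via ``$v\le g$ (a supersolution of \eqref{HJB obstacle 1} satisfies $v\le g$ pointwise by the $\max$ structure).'' Both halves are off. First, for a \emph{sub}solution of $\max\{F_1,F_2\}=0$ one has $F_1\le 0$ \emph{and} $F_2\le 0$ automatically, so there is nothing to check for $u$: the PDE inequality always holds. The nontrivial point is for the \emph{super}solution $v$, where only $F_1\ge 0$ \emph{or} $F_2\ge 0$ is guaranteed, and you need the PDE branch $F_1\ge 0$ to be active at the doubled maximizer. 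Second, the claim that supersolutions satisfy $v\le g$ is simply false (any large constant is a supersolution). What is true is that the \emph{sub}solution satisfies $u\le g$; combining this with $u(t_\eps,x_\eps)>v(s_\eps,y_\eps)$ (up to $o(1)$) and continuity of $g$ gives $v(s_\eps,y_\eps)<g(y_\eps)$ for $\eps$ small, which forces the PDE branch on the supersolution side. The paper handles this more symmetrically by the inequality $\max\{a,b\}-\max\{c,d\}\ge\min\{a-c,b-d\}$, observing that the obstacle difference $u-v+g(y_\eps)-g(x_\eps)$ tends to the strictly positive supremum, so for small $\eps$ the PDE difference must be the one that is $\le 0$.

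Once you correct this branch selection, the rest of your sketch goes through: \eqref{glo Lips 2} and \eqref{uniform conti. f} give exactly the structure-condition estimate on $H(t_\eps,x_\eps,p,X)-H(s_\eps,y_\eps,p,Y)$ needed after Ishii's matrix inequality, and the zeroth-order term is handled as you indicate.
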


\begin{proof}
For $\lambda>0$, define $u^\lambda:=e^{\lambda t}u(t,x)$, $v^\lambda:=e^{\lambda t}v(t,x)$, and 
\[
H_\lambda(t,x,p,A):=\inf_{a\in M}\left\{-b(t,x,a)\cdot p-\frac{1}{2}Tr[\sigma\sigma'(t,x,a)A]-e^{\lambda t}f(t,x,a)\right\}.
\]
Note that the conditions \eqref{glo Lips 2} and \eqref{uniform conti. f}, together with the linear growth condition \eqref{linear grow} on $b$ and $\sigma$ and the polynomial growth condition \eqref{poly grow} on $f$, imply that $H_\lambda$ is continuous. By definition, $u$ (resp. $v$) is upper semicontinuous (resp. lower semicontinuous) and has polynomial growth. Moreover, by direct calculations, the subsolution property of $u$ (resp. supersolution property of $v$) implies that $u^\lambda$ (resp. $v^\lambda$) is a viscosity subsolution (resp. viscosiy supersolution) to 
\begin{equation}\label{HJB obstacle 3}
\max\left\{ \left(c(t,x)+\lambda\right)w-\frac{\partial w}{\partial t}+H_\lambda(t,x,D_x w,D^2_{x} w),\ w-e^{\lambda t}g(x)\right\}=0\  \hbox{on}\ [0,T)\times\R^d.
\end{equation}
For any $(t,x,r,q,p,A)\in[0,T]\times\R^d\times\R\times\R\times\R^d\times\mathbb{M}^d$, define
\[
F_1(t,x,r,q,p,A):=\left(c(t,x)+\lambda\right)r-q+H_\lambda(t,x,p,A)\ \hbox{and}\ F_2(t,x,r):=r-e^{\lambda t}g(x).
\]
Since $F_1$ and $F_2$ are by definition continuous, so is $F_3:=\max\{F_1,F_2\}$. We can then write \eqref{HJB obstacle 3} as $F_3(t,x,w,\frac{\partial w}{\partial t}, D_x w,D_x^2 w)=0$. 

From the polynomial growth condition on $u^\lambda$ and $v^\lambda$, there exists some $p>0$ such that 
\[
\sup_{[0,T]\times\R^d}\frac{|u^\lambda(t,x)|+|v^\lambda(t,x)|}{1+|x|^p}<\infty.
\]
Define $\gamma(x):=1+|x|^{2p}$ and set $\varphi(t,x):=e^{-\lambda t}\gamma(x)$. From the linear growth condition \eqref{linear grow} on $b$ and $\sigma$, a direct calculation shows that $|b(t,x,a)\cdot D_x\gamma+\frac{1}{2}Tr[\sigma\sigma'(t,x,a)D_x^2\gamma]|\le C\gamma(x)$ for some $C>0$. It follows that
\begin{equation}\label{phi supersol.}
\begin{split}
&(c(t,x)+\lambda)\varphi-\frac{\partial\varphi}{\partial t}+\inf_{a\in M}\left\{-b(t,x,a)D_x\varphi-\frac{1}{2}Tr[\sigma\sigma'(t,x,a)D_x^2\varphi]\right\}\\
&\hspace{0.3in}=e^{-\lambda t}\left([c(t,x)+2\lambda]\gamma+\inf_{a\in M}\left\{-b(t,x,a)D_x\gamma-\frac{1}{2}Tr[\sigma\sigma'(t,x,a)D_x^2\gamma]\right\}\right)\\
&\hspace{0.3in}\ge e^{-\lambda t}[c(t,x)+2\lambda-C]\gamma\ge 0,\ \hbox{if}\ \lambda\ge\frac{C}{2}.
\end{split}
\end{equation} 
Now, take $\lambda\ge\frac{C}{2}$ and define $v^\lambda_\eps:=v^\lambda+\eps\varphi$ for all $\eps>0$. By definition, $v^\lambda_\eps$ is lower semicontinuous. Given any $h\in C^{1,2}([0,T)\times\R^d)$ and $(t_0,x_0)\in[0,T)\times\R^d$ such that $v^\lambda_\eps-h$ attains a local minimum, which equals $0$, at $(t_0,x_0)$, the supersolution property of $v^\lambda$ implies either $F_1\left(\cdot,h(\cdot),\frac{\partial h}{\partial t}(\cdot),D_x h(\cdot),D_x^2 h(\cdot)\right)(t_0,x_0)\ge 0$ or $F_2\left(\cdot,h(\cdot)\right)(t_0,x_0)\ge 0$. If the former holds true, we see from \eqref{phi supersol.} that 
\[
F_1\left(\cdot,v^\lambda_\eps(\cdot),\frac{\partial v^\lambda_\eps}{\partial t}(\cdot),D_x v^\lambda_\eps(\cdot),D_x^2 v^\lambda_\eps(\cdot)\right)(t_0,x_0)\ge 0;
\]
if the latter holds true, then $F_2\left(\cdot,v^\lambda_\eps(\cdot)\right)(t_0,x_0)=v^\lambda_\eps(t_0,x_0)-e^{\lambda t_0}g(x_0)=F_2\left(\cdot,v^\lambda(\cdot)\right)(t_0,x_0)+\eps\varphi(t_0,x_0)=F_2\left(\cdot,h(\cdot)\right)(t_0,x_0)+\eps\varphi(t_0,x_0)\ge 0$. Therefore, $v^\lambda_\eps$ is a lower semicontinuous viscosity supersolution to \eqref{HJB obstacle 3}. 

We would like to show $u^\lambda\le v^\lambda_\eps$ on $[0,T)\times\R^d$ for all $\eps>0$; then by sending $\eps$ to $0$, we can conclude $u\le v$ on $[0,T)\times\R^d$, as desired. We will argue by contradiction, and thus assume that 
\[
N:=\sup_{[0,T]\times\R^d}(u^\lambda-v^\lambda_\eps)(t,x)>0
\]
From the polynomial growth condition on $u^\lambda$ and $v^\lambda$ and the definition of $\varphi$, we have 
\[
\lim_{|x|\to\infty}\sup_{[0,T]}(u^\lambda-v^\lambda_\eps)(t,x)=-\infty.
\]
It follows that there exists some bounded open set $\mathcal{O}\subset\R^d$ such that the maximum $N$ is attained at some point contained in $[0,T]\times\mathcal{O}$. For each $\delta>0$, define the functions
\[
\Phi_\delta(t,s,x,y):=u^\lambda(t,x)-v^\lambda_\eps(s,y)-\eta_\delta(t,s,x,y),\ \hbox{with}\ \eta_\delta(t,s,x,y):=\frac{1}{2\delta}[|t-s|^2+|x-y|^2].
\]
Since $\Phi_\delta$ is upper semicontinuous, it attains its maximum, denoted by $N_\delta$, on the compact set $[0,T]^2\times\overline{\mathcal{O}}^2$ at some point $(t_\delta,s_\delta,x_\delta,y_\delta)$. Then, the upper semicontinuity of $u^\lambda(t,x)-v^\lambda_\eps(s,y)$ implies that $\left(u^\lambda(t_\delta,x_\delta)-v^\lambda_\eps(s_\delta,y_\delta)\right)_{\delta}$ is bounded above; moreover, it is also bounded below as
\begin{equation}\label{N<u-v}
N\le N_\delta= u^\lambda(t_\delta,x_\delta)-v^\lambda_\eps(s_\delta,y_\delta)-\eta_\delta(t_\delta,s_\delta,x_\delta,y_\delta)\le u^\lambda(t_\delta,x_\delta)-v^\lambda_\eps(s_\delta,y_\delta).
\end{equation}
Then we see from \eqref{N<u-v} and the boundedness of $\left(u^\lambda(t_\delta,x_\delta)-v^\lambda_\eps(s_\delta,y_\delta)\right)_{\delta}$ that $\left(\eta_\delta(t_\delta,s_\delta,x_\delta,y_\delta)\right)_\delta$ is also bounded. Now, note that the bounded sequence $\left(t_\delta,s_\delta,x_\delta,y_\delta\right)_\delta$ converges, up to a subsequence, to some point $(\tilde{t},\tilde{s},\tilde{x},\tilde{y})\in[0,T]^2\times\overline{\mathcal{O}}^2$. Then the definition of $\eta_\delta$ and the boundedness of $(\eta_\delta(t_\delta,s_\delta,x_\delta,y_\delta))_\delta$ imply that $\tilde{t}=\tilde{s}$ and $\tilde{x}=\tilde{y}$. Then, by sending $\delta$ to $0$ in \eqref{N<u-v}, we see that the last expression becomes $(u^\lambda-v^\lambda_\eps)(\tilde{t},\tilde{x})\le N$, which implies that 
\begin{equation}\label{N_delta to N}
N_\delta\to N\ \hbox{and}\ \eta_\delta(t_\delta,s_\delta,x_\delta,y_\delta)\to 0.  
\end{equation}

In view of Ishii's Lemma (see e.g. \cite[Lemma 4.4.6]{Pham-book}) and \cite[Remark 4.4.9]{Pham-book}, for each $\delta>0$, there exist $A_\delta, B_\delta\in\mathbb{M}^d$ such that
\begin{equation}\label{trace estimate}
Tr(CC'A_\delta-DD'B_\delta)\le\frac{3}{\delta}|C-D|^2\ \hbox{for all}\ C,D\in\mathbb{M}^d,
\end{equation}
and
\[
\left(\frac{1}{\delta}(t_\delta-s_\delta),\frac{1}{\delta}(x_\delta-y_\delta),A_\delta\right)\in\bar{\mathcal{P}}^{2,+}u^\lambda(t_\delta,x_\delta),\ \left(\frac{1}{\delta}(t_\delta-s_\delta),\frac{1}{\delta}(x_\delta-y_\delta),B_\delta\right)\in\bar{\mathcal{P}}^{2,-}v^\lambda_\eps(s_\delta,y_\delta),
\]
where $\bar{\mathcal{P}}^{2,+}w(t,x)$ (resp. $\bar{\mathcal{P}}^{2,-}w(t,x)$) denotes the superjet (resp. subjet) of an upper semicontinuous (resp. a lower semicontinuous) function $w$ at $(t,x)\in[0,T]\times\R^d$; for the definition of these notions, see e.g. \cite{User's-guide-viscosity} and \cite{Pham-book}. Since the function $F_3=\max\{F_1,F_2\}$ is continuous, we may apply \cite[Lemma 4.4.5]{Pham-book} and obtain that 
\begin{equation*}
\begin{split}
\max\left\{ \left(c(t_\delta,x_\delta)+\lambda\right)u^\lambda(t_\delta,x_\delta)-\frac{1}{\delta}(t_\delta-s_\delta)+H_{\lambda}(t_\delta,x_\delta,\frac{1}{\delta}(x_\delta-y_\delta), A_\delta),\ u^\lambda(t_\delta,x_\delta)-e^{\lambda t_\delta}g(x_\delta)\right\}&\le 0,\\
\max\left\{\left(c(s_\delta,y_\delta)+\lambda\right)v^\lambda_\eps(s_\delta,y_\delta)-\frac{1}{\delta}(t_\delta-s_\delta)+H_{\lambda}(s_\delta,y_\delta,\frac{1}{\delta}(x_\delta-y_\delta), B_\delta),\ v^\lambda_\eps(s_\delta,y_\delta)-e^{\lambda s_\delta}g(y_\delta)\right\}&\ge 0.
\end{split}
\end{equation*}
Noting that $\max\{a,b\}-\max\{c,d\}\ge\min\{a-c,b-d\}$ for any $a,b,c,d\in\R$, we then have
\begin{equation}\label{min}
\begin{split}
\min\bigg\{&\left(c(t_\delta,x_\delta)+\lambda\right)u^\lambda(t_\delta,x_\delta)-\left(c(s_\delta,y_\delta)+\lambda\right)v^\lambda_\eps(s_\delta,y_\delta)+H_{\lambda}(t_\delta,x_\delta,\frac{1}{\delta}(x_\delta-y_\delta), A_\delta)\\
&-H_{\lambda}(s_\delta,y_\delta,\frac{1}{\delta}(x_\delta-y_\delta), B_\delta),\ u^\lambda(t_\delta,x_\delta)-v^\lambda_\eps(s_\delta,y_\delta)+e^{\lambda s_\delta}g(y_\delta)-e^{\lambda t_\delta}g(x_\delta)\bigg\}\le 0.
\end{split}
\end{equation}
Since $u^\lambda(t_\delta,x_\delta)-v^\lambda_\eps(s_\delta,y_\delta)+e^{\lambda s_\delta}g(y_\delta)-e^{\lambda t_\delta}g(x_\delta)=N_\delta+\eta_\delta(t_\delta,s_\delta,x_\delta,y_\delta)+e^{\lambda s_\delta}g(y_\delta)-e^{\lambda t_\delta}g(x_\delta)\to N>0$, we conclude from \eqref{min} that as $\delta$ small enough, we must have 
\begin{equation*}
\begin{split}
&\left(c(t_\delta,x_\delta)+\lambda\right)u^\lambda(t_\delta,x_\delta)-\left(c(s_\delta,y_\delta)+\lambda\right)v^\lambda_\eps(s_\delta,y_\delta)\\
&\le H_{\lambda}(s_\delta,y_\delta,\frac{1}{\delta}(x_\delta-y_\delta), B_\delta)-H_{\lambda}(t_\delta,x_\delta,\frac{1}{\delta}(x_\delta-y_\delta), A_\delta)\le\mu(|t_\delta-s_\delta|+|x_\delta-y_\delta|+\frac{3}{\delta}|x_\delta-y_\delta|^2),
\end{split}
\end{equation*}
for some function $\mu$ such that $\mu(z)\to 0$ as $z\to 0$; note that the second inequality follows from \eqref{glo Lips 2}, \eqref{uniform conti. f}, and \eqref{trace estimate}. Finally, by sending $\delta$ to $0$ and using \eqref{N_delta to N}, we get $(c(\tilde{t},\tilde{x})+\lambda)N\le 0$, a contradiction.
\end{proof}

Now, we turn to the behavior of $V_*$, the lower semicontinuous envelope of $V$ defined as in \eqref{envelopes}, at terminal time $T$. 

\begin{lem}\label{lem:V_*>g}
For all $x\in\R^d$, $V_*(T,x)\ge g(x)$. 
\end{lem}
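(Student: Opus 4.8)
The plan is to bound $V$ from below in a neighborhood of $(T,x)$ by freezing a single constant control and exploiting the nonnegativity of $f$ and $g$ together with the small-time estimate \eqref{est 2}. Fix an arbitrary $a_0\in M$ and let $\alpha_0$ be the constant (hence deterministic) control $\alpha_0\equiv a_0$, so that $\alpha_0\in\A_{t'}$ for every $t'\in[0,T]$. Since $f\ge 0$, $g\ge 0$, $0\le c\le\bar{c}$ and $\tau\le T$ $\overline{\P}$-a.s., one has, for every $(t',x')\in[0,T)\times\R^d$ and every $\tau\in\T^{t'}_{t',T}$,
\[
J\big(t',(x',1,0);\alpha_0,\tau\big)\ \ge\ \E\big[e^{-\int_{t'}^{\tau}c(u,X^{t',x',\alpha_0}_u)\,du}\,g(X^{t',x',\alpha_0}_\tau)\big]\ \ge\ e^{-\bar{c}(T-t')}\,\E\big[g(X^{t',x',\alpha_0}_\tau)\big];
\]
since $\alpha_0\in\A_{t'}$ and the factor $e^{-\bar{c}(T-t')}$ is independent of $\tau$, the definition of $V$ then gives
\[
V(t',x')\ \ge\ \inf_{\tau\in\T^{t'}_{t',T}}J\big(t',(x',1,0);\alpha_0,\tau\big)\ \ge\ e^{-\bar{c}(T-t')}\,\inf_{\tau\in\T^{t'}_{t',T}}\E\big[g(X^{t',x',\alpha_0}_\tau)\big].
\]

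Next I would bound $\E[g(X^{t',x',\alpha_0}_\tau)]$ from below, uniformly in $\tau$, for $(t',x')$ close to $(T,x)$. Fix $\eps>0$ and, by continuity of $g$ at $x$, choose $\eta>0$ with $g(y)>g(x)-\eps$ whenever $|y-x|<\eta$. For $(t',x')$ with $T-t'\le\delta$ and $|x'-x|\le\delta\le\eta/2$, the estimate \eqref{est 2} (applied with $h=T-t'$, for any fixed $p\ge 1$) together with Markov's inequality gives, for \emph{every} $\tau\in\T^{t'}_{t',T}$,
\[
\overline{\P}\big(|X^{t',x',\alpha_0}_\tau-x|\ge\eta\big)\ \le\ \overline{\P}\Big(\sup_{t'\le s\le T}|X^{t',x',\alpha_0}_s-x'|\ge\eta/2\Big)\ \le\ \frac{C_p\,\delta^{p/2}\big(1+(|x|+1)^p\big)}{(\eta/2)^p},
\]
and the last quantity is $<\eps$ once $\delta$ is small enough, depending only on $\eps$, $\eta$, $x$ and the fixed $\alpha_0$; the key point is that the right-hand side of \eqref{est 2} does not involve $\tau$. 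Since $g\ge 0$ and $g(X^{t',x',\alpha_0}_\tau)>g(x)-\eps$ on the complementary event, it follows that
\[
\E\big[g(X^{t',x',\alpha_0}_\tau)\big]\ \ge\ (g(x)-\eps)\,\overline{\P}\big(|X^{t',x',\alpha_0}_\tau-x|<\eta\big)\ \ge\ g(x)-\eps\big(1+g(x)\big),
\]
the last inequality being verified by distinguishing the two cases $g(x)\ge\eps$ and $g(x)<\eps$. Combining with the first step, $V(t',x')\ge e^{-\bar{c}(T-t')}\big(g(x)-\eps(1+g(x))\big)$ for all such $(t',x')$.

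Finally, letting $\delta\downarrow 0$ in the definition of $V_*$ and using $e^{-\bar{c}(T-t')}\to 1$ as $t'\uparrow T$ (so that the sign of $g(x)-\eps(1+g(x))$ is immaterial), one obtains $V_*(T,x)\ge g(x)-\eps(1+g(x))$; sending $\eps\downarrow 0$ then yields $V_*(T,x)\ge g(x)$. The only delicate point is securing the uniformity over $\tau\in\T^{t'}_{t',T}$ of the small-time control of $X^{t',x',\alpha_0}_\tau$, which is exactly what \eqref{est 2} provides since its right-hand side is free of $\tau$; everything else is routine.
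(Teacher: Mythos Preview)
Your proof is correct. Both your argument and the paper's rely on the same ingredients---nonnegativity of $f$ and $g$, boundedness of $c$, continuity of $g$, and the small-time estimate \eqref{est 2}---but they are organized differently. The paper fixes an arbitrary $\alpha\in\A$, takes a sequence $(t_m,x_m)\to(T,x)$, selects $1/m$-optimal stopping times $\tau_m\in\T^{t_m}_{t_m,T}$ (which are forced to converge to $T$ since $t_m\uparrow T$), and then applies Fatou's lemma to pass to the limit inside the expectation. You instead freeze a constant control, discard the running cost by $f\ge 0$, bound the discount by $e^{-\bar c(T-t')}$, and then use Markov's inequality with \eqref{est 2} to control $\E[g(X^{t',x',\alpha_0}_\tau)]$ \emph{uniformly} in $\tau$, obtaining a quantitative lower bound on $V(t',x')$ over a whole neighborhood of $(T,x)$. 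Your route avoids the choice of near-optimal stopping times and the Fatou step, at the modest cost of the explicit case split $g(x)\gtrless\eps$; the paper's route is shorter but hides the use of \eqref{est 2} in the a.s.\ convergence $X^{t_m,x_m,\alpha}_{\tau_m}\to x$ that underlies the Fatou argument. Either way the result follows.
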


\begin{proof}
Fix $\alpha\in\A$. Take an arbitrary sequence $(t_m,x_m)\to(T,x)$ with $t_m<T$ for all $m\in\N$. By the definition of $V$, we can choose for each $m\in\N$ a stopping time $\tau_m\in\T^{t_m}_{t_m,T}$ such that 
\begin{equation*}
\begin{split}
V(t_m,x_m)&\ge \inf_{\tau\in\T^{t_m}_{t_m,T}}\E\left[\int^{\tau}_{t_m}Y^{t_m,x_m,1,\alpha}f(s,X^{t_m,x_m,\alpha},\alpha_s)ds+Y^{t_m,x_m,1,\alpha}_\tau g(X^{t_m,x_m,\alpha}_\tau)\right]\\
&\ge \E\left[\int^{\tau_m}_{t_m}Y^{t_m,x_m,1,\alpha}f(s,X^{t_m,x_m,\alpha},\alpha_s)ds+Y^{t_m,x_m,1,\alpha}_{\tau_m}g(X^{t_m,x_m,\alpha}_{\tau_m})\right]-\frac{1}{m}.
\end{split}
\end{equation*}
Note that $\tau_m\to T$ as $\tau_m\in\T^{t_m}_{t_m,T}$ and $t_m\to T$. Then it follows from Fatou's lemma that $\liminf_{m\to\infty}V(t_m,x_m)\ge g(x)$. Since $(t_m,x_m)$ is arbitrarily chosen, we conclude $V_*(T,x)\ge g(x)$.
\end{proof}


\begin{thm}\label{prop:U=V}
Assume \eqref{glo Lips 2} and \eqref{uniform conti. f}. Then $U^*=V$ on $[0,T]\times\R^d$. In particular, $U=V$ on $[0,T]\times\R^d$, i.e. the game has a value, which is the unique viscosity solution to \eqref{HJB obstacle 1} with terminal condition $w(T,x)=g(x)$ for $x\in\R^d$.
\end{thm}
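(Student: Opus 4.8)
The plan is to derive the statement from the one-sided viscosity characterizations of Sections~\ref{sec:super}--\ref{sec:sub} together with the comparison principle of Proposition~\ref{prop:comparison}, treating the terminal slice $\{T\}\times\R^d$ by hand.

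First I would record the terminal and trivial bounds. At $t=T$ the only element of $\T^T_{T,T}$ is the constant $T$, and every $\pi\in\Pi^T_{T,T}$ satisfies $\pi[\alpha]\equiv T$; since $F(\bX^{T,x,1,0,\alpha}_T)=g(x)$ for every $\alpha$, this forces $U(T,x)=V(T,x)=g(x)$. Next, taking in the definition of $U$ the strategy $\pi[\alpha]\equiv t$ shows $U\le g$ on $[0,T]\times\R^d$, so by continuity of $g$ we get $U^*\le g$ on $[0,T]\times\R^d$; in particular $U^*(T,x)\le g(x)=V(T,x)$ for all $x\in\R^d$.

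Now I would apply the main tools. Under the reinforced hypotheses \eqref{glo Lips 2} and \eqref{uniform conti. f} the Hamiltonian $H$ is continuous, so $H=H_*$, and Proposition~\ref{prop:vis sub} says that $U^*$ is an upper semicontinuous viscosity subsolution of \eqref{HJB obstacle 1} with polynomial growth, while Proposition~\ref{prop:vis super} says that $V$ is a lower semicontinuous viscosity supersolution of \eqref{HJB obstacle 1} with polynomial growth. Since $U^*(T,\cdot)\le V(T,\cdot)$, Proposition~\ref{prop:comparison} gives $U^*\le V$ on $[0,T)\times\R^d$. On the other hand, $V\le U\le U^*$ on $[0,T)\times\R^d$ --- the first inequality by definition of $U$ and $V$, the second because $(t,x)$ itself is admissible in the supremum defining $U^*(t,x)$ when $t<T$. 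Hence $U^*=U=V$ on $[0,T)\times\R^d$. To close the terminal slice, combine $U^*(T,x)\le g(x)=V(T,x)$ with $U^*(T,x)=\limsup_{(t',x')\to(T,x),\,t'<T}U(t',x')=\limsup_{(t',x')\to(T,x),\,t'<T}V(t',x')\ge V_*(T,x)\ge g(x)$, the last inequality being Lemma~\ref{lem:V_*>g}; thus $U^*(T,x)=g(x)=V(T,x)$. Therefore $U^*=V$, and a fortiori $U=V$, on all of $[0,T]\times\R^d$; this common function is continuous (simultaneously lower semicontinuous as $V$ and upper semicontinuous as $U^*$), has polynomial growth, satisfies $w(T,\cdot)=g$, and is both a viscosity sub- and supersolution of \eqref{HJB obstacle 1}, i.e.\ a viscosity solution.

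For uniqueness, suppose $w$ is another viscosity solution of \eqref{HJB obstacle 1} with polynomial growth and $w(T,\cdot)=g$; then $w^*$ is an upper semicontinuous subsolution and $w_*$ a lower semicontinuous supersolution, both of polynomial growth and with terminal trace $g$. Applying Proposition~\ref{prop:comparison} to the pair $(w^*,V)$ and then to the pair $(U^*,w_*)$ (recalling $U^*=V$) yields $w^*\le V\le w_*$, and since $w_*\le w\le w^*$ always, we get $w=V$. The only genuinely delicate point is the terminal layer: Proposition~\ref{prop:comparison} only concludes on $[0,T)\times\R^d$ and requires the ordering of terminal traces, so one must establish $U^*(T,\cdot)\le V(T,\cdot)$ before invoking it and then recover equality at $T$ from Lemma~\ref{lem:V_*>g}; the rest is a routine assembly of the earlier results.
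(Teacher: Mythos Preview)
Your argument for $U^*=V$ is correct and follows the paper's proof essentially verbatim: use $U\le g$ and continuity of $g$ to get $U^*(T,\cdot)\le g=V(T,\cdot)$, invoke Propositions~\ref{prop:vis super} and~\ref{prop:vis sub} (with $H=H_*$ under \eqref{glo Lips 2}--\eqref{uniform conti. f}) and Proposition~\ref{prop:comparison} to obtain $U^*\le V$ on $[0,T)\times\R^d$, then combine with $V\le U\le U^*$ and handle $t=T$ via Lemma~\ref{lem:V_*>g}. The only difference is cosmetic ordering: the paper first establishes the full equality $U^*(T,\cdot)=V(T,\cdot)=g$ (using $U^*\ge V_*$ and Lemma~\ref{lem:V_*>g}) and then applies comparison, whereas you apply comparison with only the one-sided bound $U^*(T,\cdot)\le V(T,\cdot)$ and return to close the terminal slice afterwards.

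One caveat on your added uniqueness paragraph, which the paper does not spell out: you assert that for an arbitrary viscosity solution $w$ with $w(T,\cdot)=g$, the envelopes $w^*$ and $w_*$ both have ``terminal trace $g$''. The bound $w^*(T,\cdot)\le g$ does follow from the obstacle part of the subsolution property, but $w_*(T,\cdot)\ge g$ does not follow from the supersolution property alone---for $V$ this was obtained in Lemma~\ref{lem:V_*>g} from the explicit control representation, not from the PDE. To make the uniqueness claim clean you should either restrict to continuous solutions with polynomial growth (so $w^*=w_*=w$ and the terminal condition is unambiguous), or impose $w_*(T,\cdot)\ge g$ as part of what ``satisfying the terminal condition'' means.
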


\begin{proof}
Since by definition $U(t,x)\le g(x)$ on $[0,T]\times\R^d$, we have $U^*(t,x)\le g(x)$ on $[0,T]\times\R^d$ by the continuity of $g$. Then by Lemma~\ref{lem:V_*>g} and the fact that $U^*\ge U\ge V$, we have $U^*(T,x)=V(T,x)=g(x)$ for all $x\in\R^d$. Recall that under \eqref{glo Lips 2} and \eqref{uniform conti. f}, the function $H$ is continuous, and $\overline{H}=H=H_*$. Now, in view of Propositions~\ref{prop:vis super} and ~\ref{prop:vis sub}, and the fact that $U^*(T,\cdot)=V(T,\cdot)$ and $\overline{H}=H=H_*$, we conclude from Proposition~\ref{prop:comparison} that $U^*=V$ on $[0,T]\times\R^d$, which in particular implies $U=V$ on $[0,T]\times\R^d$.  
\end{proof}


\appendix
\section{Proofs for Sections~\ref{sec:prelim} and~\ref{sec:problem}}\label{sec:appendix}
This Appendix is devoted to rigorous proofs of Propositions~\ref{prop:xi indep. of F_t},~\ref{lem: tau a stopping time a.s.},~\ref{prop:alpha^t,w=alpha}, and Lemma~\ref{lem:E = J}. To this end, we will first derive several auxiliary results.

Recall the definitions introduced in Subsection~\ref{subsec:setup}. Fix $t\in[0,T]$. For any $A\subseteq\Omega$, $\tilde{A}\subseteq\Omega^t$, and $x\in\R^d$, we set
\[
\tilde{A}_x:=\{\tilde{\omega}\in \tilde{A}\mid\tilde{\omega}_t=x\},
\]
and define 
\[
A^{t,\omega}:=\{\tilde{\omega}\in\Omega^t \mid \omega\otimes_t\tilde{\omega}\in A\},\ \ \ \ A^{t,\omega}_x:=(A^{t,\omega})_x,\ \ \ \   \omega\otimes_t\tilde{A}:=\{\omega\otimes_t\tilde{\omega}\mid\tilde{\omega}\in\tilde{A}\}.
\] 
Given a random time $\tau:\Omega\mapsto[0,\infty]$, whenever $\omega\in\Omega$ is fixed, we simplify our notation as $A^{\tau,\omega}=A^{\tau(\omega),\omega}$.
We also consider
\begin{equation}\label{H in g}
\mathcal{H}^t_s:=\psi^{-1}_t \G^{t,0}_s \subseteq\G^t_s,\ \forall s\in[t,T].
\end{equation}
Note that the inclusion follows from the Borel measurability of $\psi_t$. Finally, while $\E$ denotes the expectation taken under $\overline{\P}$, in this appendix we also consider $\E_\P$, the expectation taken under $\P$.   

\begin{lem}\label{lem:measurability}
Fix $t\in [0,T]$ and $\omega\in\Omega$. For any $r\in[t,T]$, $A\in\G_r$, $\tilde{A}\in\G^t_r$, and $\xi\in L^0(\Omega,\G_r)$,
\begin{itemize}
\item [(i)] $A^{t,\omega}_x = A^{t,\omega}_0+x$ and $A^{t,\omega}_x\in\G^{t,x}_r,\ \forall x\in\R^d$. 
\item [(ii)] $A^{t,\omega}=\psi_t^{-1}A^{t,\omega}_0\in
\cH^t_r\subseteq\G^t_r$ and $\P^t(A^{t,\omega})=\P^{t,x}(A^{t,\omega}_x)
=\P^{t,x}(A^{t,\omega}),\ \forall x\in\R^d$.
\item [(iii)] $\phi^{-1}_t A^{t,\omega}\in\phi^{-1}_t\cH^t_r\subseteq
\G_r$ 
and $\P(\phi_t^{-1}A^{t,\omega})=\P^t(A^{t,\omega})$.
\item [(iv)] $\omega\otimes_t\tilde{A}_{\omega_t}\in\G_r$. Hence, $\omega\otimes_t A^{t,\omega}_{\omega_t}\in\G_r$.
\item [(v)] For any Borel subset $\mathcal{E}$ of $\R$, 
$(\xi^{t,\omega})^{-1}(\mathcal{E})\in\phi^{-1}_t\cH^t_r\subseteq\G_r$. Hence, 
$\xi^{t,\omega}\in L^0(\Omega,\G_r)$.
\end{itemize}
\end{lem}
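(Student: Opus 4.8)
The plan is to establish the five assertions in the order stated, since each relies on the previous ones; throughout, the key structural fact is the decomposition $(\omega\otimes_t\tilde\omega)_r = \omega_r 1_{[0,t]}(r) + (\tilde\omega_r - \tilde\omega_t + \omega_t)1_{(t,T]}(r)$, which says that $\omega\otimes_t\tilde\omega$ depends on $\tilde\omega$ only through the increments of $\tilde\omega$ after time $t$, i.e. through $\psi_t(\tilde\omega)$. First I would prove (i): the identity $A^{t,\omega}_x = A^{t,\omega}_0 + x$ follows directly from the above decomposition, because replacing $\tilde\omega$ by $\tilde\omega + (x - \tilde\omega_t)$ (equivalently, by a path with value $x$ at time $t$) only shifts $\tilde\omega_r - \tilde\omega_t$ by nothing and $\omega_t$-terms are unchanged — more carefully, $\omega\otimes_t\tilde\omega$ with $\tilde\omega_t = 0$ versus $\tilde\omega_t = x$ differ by a constant shift $x$ on $(t,T]$, and one checks this against the definition of $A + x$. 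Measurability $A^{t,\omega}_x\in\G^{t,x}_r$ is obtained by writing $A^{t,\omega}_x = A^{t,\omega}\cap\{W^t_t = x\}$ and showing $A^{t,\omega}\in\G^t_r$, which is exactly part of (ii).

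For (ii), the identity $A^{t,\omega} = \psi_t^{-1}A^{t,\omega}_0$ is the precise form of the "increments only" observation: $\tilde\omega\in A^{t,\omega}$ iff $\omega\otimes_t\tilde\omega\in A$ iff $\omega\otimes_t\psi_t(\tilde\omega)\in A$ (since concatenation kills $\tilde\omega_t$) iff $\psi_t(\tilde\omega)\in A^{t,\omega}_0$. Since $A^{t,\omega}_0 = A^{t,\omega}\cap\{W^t_t = 0\}$, to see $A^{t,\omega}_0\in\G^{t,0}_r$ one uses that the map $\tilde\omega\mapsto\omega\otimes_t\tilde\omega$ is $\G^t_r/\G_r$-measurable (it is continuous in the relevant uniform norms on $[t,r]$, as the paper already notes for $\psi_t$ and $\phi_t$), so $A^{t,\omega}\in\G^t_r$ and hence $A^{t,\omega}_0\in\G^{t,0}_r$; then $A^{t,\omega} = \psi_t^{-1}A^{t,\omega}_0\in\psi_t^{-1}\G^{t,0}_r = \cH^t_r\subseteq\G^t_r$ by \eqref{H in g}. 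The probability identities $\P^t(A^{t,\omega}) = \P^{t,x}(A^{t,\omega}_x) = \P^{t,x}(A^{t,\omega})$ follow from (i) together with the definition $\P^{t,x}(F) := \P^t(F - x)$ of the shifted Wiener measure and the translation-invariance of Wiener measure under $\psi_t$ (the law of the increments $W^t_\cdot - W^t_t$ is the same under $\P^t$ and $\P^{t,x}$).

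For (iii), apply $\phi_t^{-1}$: since $\phi_t:(\Omega,\G_r)\to(\Omega^t,\G^t_r)$ is Borel measurable (stated in the excerpt) and $A^{t,\omega}\in\cH^t_r$, we get $\phi_t^{-1}A^{t,\omega}\in\phi_t^{-1}\cH^t_r\subseteq\G_r$; the measure identity $\P(\phi_t^{-1}A^{t,\omega}) = \P^t(A^{t,\omega})$ is the statement that $\phi_t$ pushes $\P$ forward to $\P^t$, which holds because under $\P$ the restriction of $W$ to $[t,T]$, recentred, is a Brownian motion — this is a standard fact about the Wiener measure that I would invoke directly. For (iv), write $\omega\otimes_t\tilde A_{\omega_t}$ as the image of $\tilde A_{\omega_t}\in\G^{t,\omega_t}_r$ under the measurable concatenation map and intersect appropriately; alternatively observe $\omega\otimes_t\tilde A_{\omega_t}$ is, up to the fixed prefix $\omega|_{[0,t]}$, determined by $\G^t_r$-data, so it lies in $\G_r$. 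The "Hence" is the special case $\tilde A = A^{t,\omega}$, using $(A^{t,\omega})_{\omega_t}\in\G^{t,\omega_t}_r$ from (i). Finally (v) follows from (iii): for Borel $\mathcal E\subseteq\R$, $(\xi^{t,\omega})^{-1}(\mathcal E) = \{\omega'\mid \xi(\omega\otimes_t\phi_t(\omega'))\in\mathcal E\} = \phi_t^{-1}\big(\{\xi^{-1}(\mathcal E)\}^{t,\omega}\big)$, and $\xi^{-1}(\mathcal E)\in\G_r$ since $\xi\in L^0(\Omega,\G_r)$, so (iii) gives $(\xi^{t,\omega})^{-1}(\mathcal E)\in\phi_t^{-1}\cH^t_r\subseteq\G_r$, whence $\xi^{t,\omega}\in L^0(\Omega,\G_r)$.

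The main obstacle I anticipate is getting the measurability bookkeeping in (ii) exactly right: one must verify that $\tilde\omega\mapsto\omega\otimes_t\tilde\omega$ is genuinely $\G^t_r/\G_r$-measurable for each fixed $\omega$ (the prefix $\omega|_{[0,t]}$ is constant, the tail is a continuous function of $\tilde\omega|_{[t,r]}$), and that the "increments only" reduction interacts correctly with the natural filtration rather than just the terminal $\sigma$-algebra. Everything else is an application of translation-invariance of Wiener measure and the measurability of $\phi_t,\psi_t$ already recorded in Subsection~\ref{subsec:setup}.
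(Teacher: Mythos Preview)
Your approach is correct and takes a genuinely different route from the paper. The paper proves the measurability assertions in (i) and (iv) by monotone-class style arguments: it introduces the $\sigma$-algebras $\Lambda=\{A\subseteq\Omega\mid A^{t,\omega}_x\in\G^{t,x}_r\}$ and $\Lambda=\{\tilde A\subseteq\Omega^t\mid \omega\otimes_t\tilde A_{\omega_t}\in\G_r\}$, checks they are $\sigma$-algebras, and verifies they contain the cylinder generators $\mathcal C_r$, $\mathcal C^t_r$. You instead exploit the $\G^t_r/\G_r$-measurability (indeed continuity in the seminorms $\|\cdot\|_{t,r}$, $\|\cdot\|_{0,r}$) of the concatenation map $\tilde\omega\mapsto\omega\otimes_t\tilde\omega$ to get $A^{t,\omega}\in\G^t_r$ in one stroke, and then read off (i) by intersecting with $\{W^t_t=x\}$. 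This is more conceptual and avoids the generator bookkeeping; the paper's route is more elementary but longer. For (iii), the paper invokes the Markov property (property (e'') in Karatzas--Shreve) to compute $\P[\phi_t^{-1}A^{t,\omega}\mid\G_{t+}]$ and then takes expectations, whereas you argue directly via the pushforward of $\P$; both work, but note that $\phi_t\#\P$ is \emph{not} $\P^t$ literally---it is $(\psi_t\circ\phi_t)\#\P=\P^t$, and the identity goes through precisely because $A^{t,\omega}\in\cH^t_r=\psi_t^{-1}\G^{t,0}_r$, so tighten the ``recentred'' remark accordingly.

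The one place to be careful is (iv): writing $\omega\otimes_t\tilde A_{\omega_t}$ as ``the image under the measurable concatenation map'' is dangerous, since images of measurable sets under measurable maps need not be measurable. What actually works---and is presumably what your ``intersect appropriately'' gestures at---is the identity
\[
\omega\otimes_t\tilde A_{\omega_t}=\phi_t^{-1}\bigl(\tilde A\cap\{W^t_t=\omega_t\}\bigr)\ \cap\ \bigcap_{s\in\Q\cap[0,t)}\{W_s=\omega_s\},
\]
which exhibits the set as an intersection of $\G_r$-sets (the first factor is a $\phi_t$-preimage of a $\G^t_r$-set, the second lies in $\G_t\subseteq\G_r$). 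The paper sidesteps this by another generating-set argument on $\mathcal C^t_r$; your route is shorter once the identity above is written down, but you should state it explicitly rather than leave it as a hint.
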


\begin{proof}
(i) Fix $x\in\R^d$. Since $\tilde{\omega}\in A^{t,\omega}_0\Leftrightarrow\omega\otimes_t\tilde{\omega}\in A$ and $\tilde{\omega}_t=0\Leftrightarrow(\omega\otimes_t(\tilde{\omega}+x))_\cdot=\omega_\cdot1_{[0,t]}(\cdot)+((\tilde{\omega}_\cdot+x)-(\tilde{\omega}_t+x)+\omega_t)1_{(t,T]}(\cdot)=(\omega\otimes_t\tilde{\omega})_\cdot\in A$ and $(\tilde{\omega}+x)_t=x\Leftrightarrow \tilde{\omega}+x\in A^{t,\omega}_x$, we conclude $A^{t,\omega}_x = A^{t,\omega}_0+x$. 

Set $\Lambda:=\{A\subseteq \Omega \mid A^{t,\omega}_x\in\G^{t,x}_r\}$. Note that $\Omega\in\Lambda$ since $\Omega^{t,\omega}_x=\{\tilde{w}\in\Omega^t \mid \omega\otimes_t\tilde{\omega}\in \Omega, \tilde{\omega}_t=x\}=(\Omega^t)_x\in\G^{t,x}_r$. Given $A\in\Lambda$, we have $(A^c)^{t,\omega}_x=(\Omega^t)_x\setminus\{\tilde{\omega}\in\Omega^t \mid \omega\otimes_t \tilde{\omega}\in A,\tilde{\omega}_t=x\}=(\Omega^t)_x\setminus A^{t,\omega}_x\in\G^{t,x}_r$,  
which shows $A^c\in\Lambda$. Given $\{A_i\}_{i\in\N}\subset\Lambda$, we have $\left(\bigcup_{i\in\N}A_i\right)^{t,\omega}_x=\bigcup_{i\in\N}\{\tilde{\omega}\in\Omega^t \mid \omega\otimes_t \tilde{\omega}\in A_i, \tilde{\omega}_t=x\}=\bigcup_{i\in\N}(A_i)^{t,\omega}_x\in\G_r^{t,x}$,
which shows $\bigcup_{i\in\N}A_i\in\Lambda$. Thus, we conclude $\Lambda$ is a $\sigma$-algebra of $\Omega$. For any $x\in\Q^d$ and $\lambda\in\Q_+$, the set of positive rationals, let $O_\lambda(x)$ denote the open ball in $\R^d$ centered at $x$ with radius $\lambda$. Note from \cite[p.307]{KS-book-91} that for each $s\in[0,T]$, $\G^s_r$ is countably generated by 
\begin{equation}\label{countably generated}
\mathcal{C}^s_r:=\bigg\{\bigcap_{i=1}^m(W^s_{t_i})^{-1}(O_{\lambda_i}(x_i))\ \bigg|\ m\in\N,\ t_i\in\Q,\ s\le t_1<\cdots <t_m\le r,\ x_i\in\Q^d,\ \lambda_i\in\Q_+\bigg\}.
\end{equation}
Given $C=\bigcap_{i=1}^m(W_{t_i})^{-1}(O_{\lambda_i}(x_i))$ in $\mathcal{C}_r=\mathcal{C}^0_r$, if $t_m\ge t$, set $k=\min\{i=1,\cdots,m \mid t_i\ge t\}$; otherwise, set $k=m+1$. Then, if $\omega_{t_i}\notin O_{\lambda_i}(x_i)$ for some $i=1,\cdots,k-1$, we have $C^{t,\omega}_x=\emptyset\in\G^{t,x}_r$; if $k=m+1$ and $w_{t_i}\in O_{\lambda_i}(x_i)$ $\forall i=1,\cdots,m$, we have $C^{t,\omega}_x=(\Omega^t)_x\in\G^{t,x}_r$; for all other cases, 
\begin{equation}\label{C^{s,w}}
C^{t,\omega}_x=\{W^t_t=x\}\cap\bigcap_{i=k}^m(W^t_{t_i})^{-1}\left(O_{\lambda_i}(x_i-\omega_t+x)\right)\in\G^{t,x}_r.
\end{equation}
Thus, $\mathcal{C}_r\subseteq\Lambda$, which implies $\G_r=\sigma(\mathcal{C}_r)\subseteq\Lambda$. Now, for any $A\in\G_r$, $A^{t,\omega}_x\in\G^{t,x}_r\subseteq\G^t_r$.

(ii) Observe from part (i) that $\tilde{\omega}\in A^{t,\omega}\Leftrightarrow\tilde{\omega}\in A^{t,\omega}_{\tilde{\omega}_t}\Leftrightarrow \tilde{\omega}-\tilde{\omega}_t\in A_0^{t,\omega}$ i.e. $\psi_t(\tilde{\omega})\in A_0^{t,\omega}\Leftrightarrow \tilde{\omega}\in\psi^{-1}_t(A^{t,\omega}_0)$. Thus, $A^{t,\omega}=\psi_t^{-1}(A^{t,\omega}_0)\in\psi_t^{-1}(\G^{t,0}_r)=\cH^t_r\subseteq\G^t_r$, thanks to part (i) and \eqref{H in g}. Then, using part (i) again, $\P^t(A^{t,\omega})=\P^t(A^{t,\omega}_0)=\P^{t,x}(A^{t,\omega}_0+x)=\P^{t,x}(A^{t,\omega}_x)=\P^{t,x}(A^{t,\omega}),\ \forall x\in\R^d$.

(iii) By part (ii) and the Borel measurability of $\phi_t:(\Omega,\G_r)\mapsto(\Omega^t,\G^t_r)$, we immediately have $\phi_t^{-1} A^{t,\omega}\in\phi_t^{-1}\cH^t_r\subseteq\G_r$. Now, by property (e'') in \cite[p.84]{KS-book-91} and part (ii), 
\[
\P[\phi_t^{-1}A^{t,\omega}\mid\G_{t+}](\omega')=\P^{t,\omega'_t}(A^{t,\omega})=\P^t(A^{t,\omega})\ \ \hbox{for}\ \P\hbox{-a.e.}\ \omega'\in\Omega, 
\]
which implies $\P[\phi_t^{-1}A^{t,\omega}]=\P^t(A^{t,\omega})$.

(iv) Set $\Lambda:=\{\tilde{A}\subseteq \Omega^t \mid \omega\otimes_t\tilde{A}_{\omega_t}\in\G_r\}$. Let $\mathcal{C}^t_r$ be given as in \eqref{countably generated}. For any $C=\bigcap_{i=1}^m(W^t_{t_i})^{-1}(O_{\lambda_i}(x_i))$ in $\mathcal{C}^t_r$, we deduce from the continuity of paths in $\Omega$ that
\begin{equation*}
\begin{split}
\omega\otimes_t C_{\omega_t} &= \{\omega'\in\Omega \mid \omega'_s=\omega_s\ \forall s\in\Q\cap[0,t)\ \hbox{and}\ \omega'_{t_i}\in O_{\lambda_i}(x_i)\ \hbox{for}\ i=1,\cdots,m\}\\
&= \bigg(\bigcap_{s\in\Q\cap[0,t)}(W_{s})^{-1}(\omega_s)\bigg) \cap \bigg(\bigcap_{i=1}^m(W_{t_i})^{-1}(O_{\lambda_i}(x_i))\bigg) \in\G_r.
\end{split}
\end{equation*}
Thus, we have $\mathcal{C}^t_r\subseteq\Lambda$. Given $\{\tilde{A}_i\}_{i\in\N}\subset\Lambda$, we have $\omega\otimes_t (\bigcup_{i\in\N}\tilde{A}_i)_{\omega_t}=\bigcup_{i\in\N}(\omega\otimes_t(\tilde{A_i})_{\omega_t})\in\G_r$, which shows $\bigcup_{i\in\N}\tilde{A}_i\in\Lambda$; this in particular implies $\Omega^t=\bigcup_{n\in\N}(W^t_r)^{-1}(O_n(0))\in\Lambda$. Given $\tilde{A}\in\Lambda$, we have $\omega\otimes_t (\tilde{A}^c)_{\omega_t}=(\omega\otimes_t (\Omega^t)_{\omega_t})\setminus(\omega\otimes_t \tilde{A}_{\omega_t})\in\G_r$, which shows $\tilde{A}^c\in\Lambda$. Hence, $\Lambda$ is a $\sigma$-algebra of $\Omega^t$, which implies $\G^t_r=\sigma(\mathcal{C}^t_r)\subseteq\Lambda$. Now, by part (i), we must have $\omega\otimes_t A^{t,\omega}_{\omega_t}\in\G_r$.

(v) Since $\xi^{-1}(\mathcal{E})\in\G_r$, 
$(\xi^{t,\omega})^{-1}(\mathcal{E})=\{\omega'\in\Omega\mid\xi(\omega\otimes_t\phi_t(\omega'))\in\mathcal{E}\}=\{\omega'\in\Omega\mid\omega\otimes_t\phi_t(\omega')\in\xi^{-1}(\mathcal{E})\}=\phi^{-1}_t(\xi^{-1}(\mathcal{E}))^{t,\omega}\in\phi^{-1}_t\cH^t_r\subseteq\G_r$, thanks to part (iii).
\end{proof}

In light of Theorem 1.3.4 and equation (1.3.15) in \cite{Stroock-Varadhan-book-06}, for any $\mathbb{G}$-stopping time $\tau$, there exists a family $\{Q^\omega_\tau\}_{\omega\in\Omega}$ of probability measures on $(\Omega,\G_T)$, called a regular conditional probability distribution (r.c.p.d.) of $\P$ given $\G_\tau$, such that 
\begin{itemize}
\item [(i)] for each $A\in\G_T$, the mapping $\omega\mapsto Q^{\omega}_\tau(A)$ is $\G_{\tau}$-measurable.
\item [(ii)] for each $A\in\G_T$, it holds for $\P$-a.e. $\omega\in\Omega$ that
$\P[A\mid\G_{\tau}](\omega)=Q^{\omega}_{\tau}(A)$.
\item [(iii)] for each $\omega\in\Omega$, $Q^{\omega}_\tau\left(\omega\otimes_\tau(\Omega^{\tau(\omega)})_{\omega_{\tau}}\right)=1$.
\end{itemize}
By property (iii) above and Lemma~\ref{lem:measurability} (iv), for any fixed $\omega\in\Omega$, we can define a probability measure $Q^{\tau,\omega}$ on $\left(\Omega^{\tau(\omega)},\G^{\tau(\omega)}_T\right)$ by 
\[
Q^{\tau,\omega}(\tilde{A}):=Q^\omega_{\tau}(\omega\otimes_\tau \tilde{A}_{\omega_\tau}),\ \forall \tilde{A}\in\G^{\tau(\omega)}_T.
\]     
Then, combining properties (ii) and (iii) above, we have: for $A\in\G_T$, it holds for $\P$-a.e. $\omega\in\Omega$ that
\begin{equation}\label{pre-rcpd}
\P[A\mid\G_{\tau}](\omega)=Q^{\omega}_{\tau}\left((\omega\otimes_\tau(\Omega^{\tau(\omega)})_{\omega_{\tau}})\cap A\right)=Q^{\omega}_{\tau}(\omega\otimes_\tau A^{\tau,\omega}_{\omega_{\tau}})=Q^{\tau,\omega}(A^{\tau,\omega}).
\end{equation}  
Note that the r.c.p.d. $\{Q^\omega_\tau\}_{\omega\in\Omega}$ is generally not unique. For each $(t,x)\in[0,T]\times\R^d$, observe that the shifted Wiener measure $\P^{t,x}$ can be characterized as the unique solution to the martingale problem for the operator $L := \frac{1}{2}\sum_{i,j=1}^d\frac{\partial^2}{\partial x_i\partial x_j}$ starting from time $t$ with initial value $x$ (see \cite[Remark 7.1.23]{Stroock-book-11} and \cite[Exercise 6.7.3]{Stroock-Varadhan-book-06}). Then, thanks to the strong Markov property of solutions to the martingale problem (see e.g. \cite[Theorem 6.2.2]{Stroock-Varadhan-book-06}), there exists a particular r.c.p.d. $\{Q^\omega_\tau\}_{\omega\in\Omega}$ such that $Q^{\tau,\omega}=\P^{\tau(\omega),\omega_{\tau(\omega)}}$. Now, by \eqref{pre-rcpd} and Lemma~\ref{lem:measurability} (ii), we have: for $A\in\G_T,$
\begin{equation}\label{rcpd}
\P[A\mid\G_{\tau}](\omega)=\P^{\tau(\omega),\omega_{\tau(\omega)}}(A^{\tau,\omega})=\P^{\tau(\omega)}(A^{\tau,\omega}),\ \P\hbox{-a.s.}
\end{equation}

So far, we have restricted ourselves to $\mathbb{G}$-stopping times. We say a random variable $\tau:\Omega\mapsto[0,\infty]$ is a $\mathbb{G}$-optional time if $\{\tau<t\}\in\G_t$ for all $t\in[0,T]$. In the following, we obtain a generalized version of \eqref{rcpd} for $\mathbb{G}$-optional times.

\begin{lem}\label{lem:strong Markov}
Fix a $\mathbb{G}$-optional time $\tau\le T$. For any $A\in\G_T$, 
\[
\P[A\mid \G_{\tau+}](\omega)=\P^{\tau(\omega)}(A^{\tau,\omega})\ \hbox{for}\ \P\hbox{-a.e.}\ \omega\in\Omega.
\]
\end{lem}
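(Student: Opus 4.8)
The plan is to reduce the statement for a $\mathbb{G}$-optional time $\tau$ to the already-established identity \eqref{rcpd} for genuine $\mathbb{G}$-stopping times, using the standard device of approximating $\tau$ from above by a decreasing sequence of discrete stopping times. Concretely, for each $n\in\N$ define $\tau_n:=\sum_{k\ge 1}\frac{k}{2^n}1_{\{(k-1)/2^n\le\tau<k/2^n\}}+T1_{\{\tau=T\}}$, which rounds $\tau$ up to the next dyadic level. Since $\tau$ is $\mathbb{G}$-optional, each $\{(k-1)/2^n\le\tau<k/2^n\}=\{\tau<k/2^n\}\setminus\{\tau<(k-1)/2^n\}\in\G_{k/2^n}$, so $\tau_n$ is an honest $\mathbb{G}$-stopping time taking values in a finite set, and $\tau_n\downarrow\tau$ pointwise. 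Moreover $\G_{\tau+}=\bigcap_n\G_{\tau_n}$ (this is the classical fact that $\G_{\tau+}=\bigcap_n\G_{\tau_n}$ for such an approximating sequence; it uses only that $\tau_n\downarrow\tau$ and $\tau_n>\tau$ eventually is false — one has $\tau_n\ge\tau$ with $\tau_n\le\tau+2^{-n}$, hence $\bigcap_n\G_{\tau_n}\subseteq\G_{\tau+}$, and the reverse inclusion is immediate since $\tau\le\tau_n$).

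Next I would apply \eqref{rcpd} to each stopping time $\tau_n$: for every $A\in\G_T$,
\[
\P[A\mid\G_{\tau_n}](\omega)=\P^{\tau_n(\omega)}(A^{\tau_n,\omega})\quad\P\text{-a.s.}
\]
By the backward martingale convergence theorem (applied to the reversed filtration $\{\G_{\tau_n}\}_n$, which decreases to $\G_{\tau+}$), the left-hand sides converge $\P$-a.s. and in $L^1$ to $\P[A\mid\G_{\tau+}](\omega)$. It therefore remains to identify the a.s.\ limit of the right-hand sides with $\P^{\tau(\omega)}(A^{\tau,\omega})$. Here I would argue $\omega$ by $\omega$: fix $\omega$ outside the relevant null sets. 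Since $\tau_n(\omega)\downarrow\tau(\omega)$ and $\tau_n(\omega)$ is eventually equal to $\tau(\omega)$ on $\{\tau(\omega)\in\text{dyadics}\}$ but in general only converges, one uses continuity in the starting time: the map $t\mapsto\P^{t}$ on path space, together with the continuity of paths, gives $\omega\otimes_{\tau_n(\omega)}\tilde\omega\to\omega\otimes_{\tau(\omega)}\tilde\omega$ uniformly, so $A^{\tau_n,\omega}\to A^{\tau,\omega}$ in an appropriate sense; combined with the weak-convergence/consistency of the Wiener measures $\P^{\tau_n(\omega)}\to\P^{\tau(\omega)}$, one gets $\P^{\tau_n(\omega)}(A^{\tau_n,\omega})\to\P^{\tau(\omega)}(A^{\tau,\omega})$. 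It is cleanest to first prove this for $A$ in the generating algebra $\mathcal{C}_T$ of \eqref{countably generated} — where $A^{t,\omega}$ is given by the explicit formula \eqref{C^{s,w}} and the dependence on the base point $t$ and on $\omega_t$ is transparent — and then extend to all of $\G_T$ by a monotone-class/Dynkin argument, using that both sides define measures in $A$ and $\G_T=\sigma(\mathcal{C}_T)$.

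The main obstacle I anticipate is the last identification step: passing from the discrete-level identity to the limit requires controlling $\P^{\tau_n(\omega)}(A^{\tau_n,\omega})$ as both the shift base point and the conditioning event move with $n$. One has to be careful that the exceptional $\P$-null set does not depend on $A$ in an uncontrolled way — this is handled by first fixing a countable generating family $\mathcal{C}_T$, taking the (countable) union of the null sets arising for each generator, and only then invoking the monotone class theorem; on the good set the convergence holds simultaneously for all generators, hence for all of $\G_T$. A secondary technical point is justifying $\G_{\tau+}=\bigcap_n\G_{\tau_n}$, but this is entirely standard (see e.g.\ \cite[Chapter 1]{KS-book-91}) and needs only the monotonicity $\tau\le\tau_n\le\tau+2^{-n}$ together with right-continuity properties of the filtration $\mathbb{G}$ used throughout the paper.
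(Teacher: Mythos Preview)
Your proposal is essentially the same as the paper's proof: approximate $\tau$ from above by dyadic $\mathbb{G}$-stopping times $\tau_n$, invoke \eqref{rcpd} at each level, use $\G_{\tau+}=\bigcap_n\G_{\tau_n}$, and pass to the limit after establishing the pointwise convergence $\P^{\tau_n(\omega)}(A^{\tau_n,\omega})\to\P^{\tau(\omega)}(A^{\tau,\omega})$ first on the generators $\mathcal{C}_T$ and then on all of $\G_T$ via a Dynkin argument. The only cosmetic difference is that you pass to the limit via the backward martingale convergence theorem, whereas the paper instead fixes $B\in\G_{\tau+}\subseteq\G_{\tau_n}$, writes $\E_\P[1_A1_B]=\E_\P[\P^{\tau_n(\omega)}(A^{\tau_n,\omega})1_B]$, and applies dominated convergence.

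One small correction: your justification ``both sides define measures in $A$'' is not the right reason for the Dynkin extension of the \emph{convergence} statement. What the paper actually checks (and what you should check) is that the collection $\Lambda:=\{A\subseteq\Omega\mid \lim_n\P^{\tau_n(\omega)}(A^{\tau_n,\omega})=\P^{\tau(\omega)}(A^{\tau,\omega})\}$ is a Dynkin system for each fixed $\omega$; this uses that $A\mapsto\P^{s}(A^{s,\omega})$ is a probability measure for each fixed $s$, but the argument is about closure of $\Lambda$ under complements and countable disjoint unions, not about equality of two measures. Note also that this convergence holds for \emph{every} $\omega$, not merely $\P$-a.e., so your concern about controlling the exceptional null set across $A\in\mathcal{C}_T$ is unnecessary here: the only $A$-dependent null set enters through \eqref{rcpd} (or, in your version, the backward martingale limit), and the lemma only claims the identity for $\P$-a.e.\ $\omega$ with the null set allowed to depend on $A$.
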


\begin{proof}
{\bf Step 1:} By \cite[Problem 1.2.24]{KS-book-91}, we can take a sequence $\{\tau_n\}_{n\in\N}$ of $\mathbb{G}$-stopping times such that $\tau_{n}(\omega)\downarrow\tau(\omega)$ for all $\omega\in\Omega$. Fix $A\in\G_T$. For each $n\in\N$, \eqref{rcpd} implies that for any $B\in\G_{\tau_n}$,
\begin{equation}\label{E[1A1B]}
\E_{\P}[1_A1_B]=\E_{\P}[\P^{\tau_n(\omega)}(A^{\tau_n,\omega})1_B].
\end{equation}
Then, for any $B\in\G_{\tau+}$, we must have \eqref{E[1A1B]} for all $n\in\N$, since $\G_{\tau+}=\bigcap_{n\in\N}\G_{\tau_n}$. Now, by taking the limit in $n$ and assuming that for each $\omega\in\Omega$ 
\begin{equation}\label{limP^tau_n}
\lim_{n\to\infty}\P^{\tau_n(\omega)}(A^{\tau_n,\omega})= \P^{\tau(\omega)}(A^{\tau,\omega}),
\end{equation}
we obtain from the dominated convergence theorem that $\E_{\P}[1_A1_B]=\E_{\P}[\P^{\tau(\omega)}(A^{\tau,\omega})1_B]$. Since $B\in\G_{\tau+}$ is arbitrary, we conclude $\P[A\mid \G_{\tau+}](\omega)=\P^{\tau(\omega)}(A^{\tau,\omega})$ for $\P$-a.e. $\omega\in\Omega$.

{\bf Step 2:} It remains to prove \eqref{limP^tau_n}. Fix $\omega\in\Omega$ and set $\Lambda:=\{A\subseteq\Omega\mid \eqref{limP^tau_n}\ \hbox{holds}\}$. Since $\Omega^{s,\omega}=\Omega^{s}$, $\forall s\in[0,T]$, \eqref{limP^tau_n} holds for $\Omega$ and thus $\Omega\in\Lambda$. Given $A\in\Lambda$, we have 
$\P^{\tau_n(\omega)}[(A^c)^{\tau_n,\omega}]=\P^{\tau_n(\omega)}[(A^{\tau_n,\omega})^c]=1-\P^{\tau_n(\omega)}(A^{\tau_n,\omega})
\to 1-\P^{\tau(\omega)}(A^{\tau,\omega})=\P^{\tau(\omega)}[(A^{\tau,\omega})^c]=\P^{\tau(\omega)}[(A^c)^{\tau,\omega}]$,
which shows $A^c\in\Lambda$. Given a sequence $\{A_i\}_{i\in\N}$ of disjoint sets in $\Lambda$, observe that $\{A_i^{s,\omega}\}_{i\in\N}$ is a sequence of disjoint sets in $\Omega^s$ for any $s\in[0,T]$. Then we have
$\P^{\tau_n(\omega)}[(\bigcup_{i\in\N}A_i)^{\tau_n,\omega}]=\P^{\tau_n(\omega)}[\bigcup_{i\in\N}A_i^{\tau_n,\omega}]=\sum_{i\in\N}\P^{\tau_n(\omega)}(A_i^{\tau_n,\omega})
\to \sum_{i\in\N}\P^{\tau(\omega)}(A_i^{\tau,\omega})=\P^{\tau(\omega)}[\bigcup_{i\in\N}A_i^{\tau,\omega}]=\P^{\tau(\omega)}[(\bigcup_{i\in\N}A_i)^{\tau,\omega}]$,
which shows $\bigcup_{i\in\N}A_i\in\Lambda$. Thus, we conclude that $\Lambda$ is a $\sigma$-algebra of $\Omega$. 

As mentioned in the proof of Lemma~\ref{lem:measurability} (i), $\G_T$ is countably generated by $\mathcal{C}_T=\mathcal{C}^0_T$ given in \eqref{countably generated}. Given $C=\bigcap_{i=1}^m(W_{t_i})^{-1}(O_{\lambda_i}(x_i))$ in $\mathcal{C}_T$, if $t_m\ge \tau(\omega)$ we set $k:=\min\{i=1,\cdots,m\mid t_i\ge \tau(\omega)\}$; otherwise, set $k:=m+1$. We see that: {\bf 1.} If $\omega_{t_i}\notin O_{\lambda_i}(x_i)$ for some $i=1,\cdots,k-1$, then $C^{s,\omega}=\emptyset$ $\forall s\in[\tau(\omega),T]$ and thus \eqref{limP^tau_n} holds for $C$. {\bf 2.} If $k=m+1$ and $\omega_{t_i}\in O(x_i)$ for all $i=1,\cdots, m$, we have $C^{s,\omega}=\Omega^s$ $\forall s\in[\tau(\omega),T]$ and thus \eqref{limP^tau_n} still holds for $C$. {\bf 3.} For all other cases, $C^{s,\omega}_{\omega_s}$ is of the form in \eqref{C^{s,w}} $\forall s\in[\tau(\omega),T]$. Let $B$ be a $d$-dimensional Brownian motion defined on any given filtered probability space $(E,\mathcal{I},\{\mathcal{I}_s\}_{s\ge 0},P)$. Then by Lemma~\ref{lem:measurability} (ii),
\begin{equation*}
\begin{split}
&\hspace{0.25in}\P^{\tau_n(\omega)}[C^{\tau_n,\omega}]=\P^{\tau_n(\omega),\omega_{\tau_n(\omega)}}[C^{\tau_n,\omega}_{\omega_{\tau}}]=P[B_{t_i-\tau_n(\omega)}\in O_{\lambda_i}(x_i-\omega_{\tau_n(\omega)}),i=k\cdots,m]\\
&\to P[B_{t_i-\tau(\omega)}\in O_{\lambda_i}(x_i-\omega_{\tau(\omega)}),i=k\cdots,m]=\P^{\tau(\omega),\omega_{\tau(\omega)}}[C^{\tau,\omega}_{\omega_{\tau}}]=\P^{\tau(\omega)}[C^{\tau,\omega}]. 
\end{split}
\end{equation*}
Hence, we conclude that $\mathcal{C}_T\subseteq\Lambda$ and therefore $\G_T=\sigma(\mathcal{C}_T)\subseteq\Lambda$.
\end{proof}

Now, we want to generalize Lemma~\ref{lem:measurability} to incorporate $\mathbb{F}$-stopping times.

\begin{lem}\label{lem:measurability 2}
Fix $\theta\in\T$. We have
\begin{itemize}
\item [(i)] For any $\overline{N}\in\overline{\cN}$, $\overline{N}^{\theta,\omega}\in\overline{\cN}^{\theta(\omega)}$ and $\phi_{\theta}^{-1}\overline{N}^{\theta,\omega}\in\overline{\cN}$ for $\overline{\P}$-a.e. $\omega\in\Omega$. 
\item [(ii)] For any $r\in[0,T]$ and $A\in\F_r$, it holds for $\overline{\P}\hbox{-a.e.}\  \omega\in\Omega$ that
\[
\hbox{if}\ \theta(\omega)\le r,\ \ \ \  A^{\theta,\omega}\in\cH^{\theta(\omega)}_r\cup\overline{\cN}^{\theta(\omega)}\subseteq\oG^{\theta(\omega)}_r\ \  \hbox{and}\ \ \phi_{\theta}^{-1}A^{\theta,\omega}\in\F^{\theta(\omega)}_r. 
\]
\item [(iii)] For any $r\in[0,T]$ and $\xi\in L^0(\Omega, \F_r)$, it holds for $\overline{\P}\hbox{-a.e.}\ \omega\in\Omega$ that
\[
\hbox{if}\ \theta(\omega)\le r,\ \ \xi^{\theta,\omega}\in L^0(\Omega,\F^{\theta(\omega)}_r).
\]

\end{itemize}
\end{lem}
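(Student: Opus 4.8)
The plan is to reduce everything to Lemma~\ref{lem:measurability} and Lemma~\ref{lem:strong Markov} by replacing the $\mathbb{F}$-stopping time $\theta$ with an a.s.-equal $\mathbb{G}$-optional time. Since $\mathbb{F}=\overline{\mathbb{G}}$ is the usual augmentation of $\mathbb{G}$, a standard result yields a $\mathbb{G}$-optional time $\theta'\le T$ with $\theta=\theta'$ $\overline{\P}$-a.s.; on the full-measure set $\{\theta=\theta'\}$ every object appearing in the statement ($\theta(\omega)$, $A^{\theta,\omega}$, $\phi_\theta^{-1}A^{\theta,\omega}$, $\F^{\theta(\omega)}_r$, $\xi^{\theta,\omega}$) is unchanged upon replacing $\theta$ by $\theta'$, so it suffices to prove the lemma with $\theta$ replaced by $\theta'$; from now on I assume $\theta$ is $\mathbb{G}$-optional. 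The key consequence is that for any $N\in\cN$ one has $\P^{\theta(\omega)}(N^{\theta,\omega})=0$ for $\overline{\P}$-a.e.\ $\omega$: indeed $N^{\theta,\omega}=N^{\theta(\omega),\omega}\in\cH^{\theta(\omega)}_T\subseteq\G^{\theta(\omega)}_T$ by Lemma~\ref{lem:measurability}(ii) applied with the deterministic time $t=\theta(\omega)$ (for each frozen $\omega$), while Lemma~\ref{lem:strong Markov} gives $\P^{\theta(\omega)}(N^{\theta,\omega})=\P[N\mid\G_{\theta+}](\omega)=0$ $\P$-a.s.

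For part (i), write $\overline{N}\subseteq N$ with $N\in\cN$. For every $\omega$ with $\theta(\omega)\le T$, Lemma~\ref{lem:measurability}(ii)--(iii) (with $t=\theta(\omega)$) gives $N^{\theta,\omega}\in\cH^{\theta(\omega)}_T$, $\phi_\theta^{-1}N^{\theta,\omega}\in\G_T$, and $\P(\phi_\theta^{-1}N^{\theta,\omega})=\P^{\theta(\omega)}(N^{\theta,\omega})$, which vanishes for a.e.\ $\omega$ by the previous paragraph. Hence $N^{\theta,\omega}\in\cN^{\theta(\omega)}$ and $\phi_\theta^{-1}N^{\theta,\omega}\in\cN$ for $\overline{\P}$-a.e.\ $\omega$, and since $\overline{N}^{\theta,\omega}\subseteq N^{\theta,\omega}$ and $\phi_\theta^{-1}\overline{N}^{\theta,\omega}\subseteq\phi_\theta^{-1}N^{\theta,\omega}$, both assertions of (i) follow.

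For part (ii), fix $r$ and $A\in\F_r=\oG_r=\sigma(\G_r\cup\overline{\cN})$, and choose $A_0\in\G_r$, $\overline{N}\in\overline{\cN}$ with $A\triangle A_0\subseteq\overline{N}$. For $\omega$ with $\theta(\omega)\le r$, Lemma~\ref{lem:measurability}(i)--(ii) (with $t=\theta(\omega)$) gives $A_0^{\theta,\omega}=\psi_{\theta(\omega)}^{-1}B\in\cH^{\theta(\omega)}_r$ for some $B\in\G^{\theta(\omega),0}_r$, while $A^{\theta,\omega}\triangle A_0^{\theta,\omega}\subseteq\overline{N}^{\theta,\omega}\in\overline{\cN}^{\theta(\omega)}$ by part (i). Thus $A^{\theta,\omega}$ differs from the $\cH^{\theta(\omega)}_r$-set $A_0^{\theta,\omega}$ by an $\overline{\cN}^{\theta(\omega)}$-set, whence $A^{\theta,\omega}\in\oG^{\theta(\omega)}_r$. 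Applying $\phi_{\theta(\omega)}^{-1}$ and using $\phi_{\theta(\omega)}^{-1}A_0^{\theta,\omega}=\phi_{\theta(\omega)}^{-1}\psi_{\theta(\omega)}^{-1}B\in\phi_{\theta(\omega)}^{-1}\psi_{\theta(\omega)}^{-1}\G^{\theta(\omega),0}_r\subseteq\mathcal{J}^{\theta(\omega)}_r$ together with $\phi_\theta^{-1}(\overline{N}^{\theta,\omega})\in\overline{\cN}\subseteq\mathcal{J}^{\theta(\omega)}_r$ (again part (i)), and that $\mathcal{J}^{\theta(\omega)}_r$ is a $\sigma$-algebra, we obtain $\phi_\theta^{-1}A^{\theta,\omega}\in\mathcal{J}^{\theta(\omega)}_r\subseteq\mathcal{J}^{\theta(\omega)}_{r+}=\F^{\theta(\omega)}_r$.

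Finally, part (iii) follows from (ii) by a generating-class argument: for $\xi\in L^0(\Omega,\F_r)$ and Borel $\mathcal{E}\subseteq\R$ one has $(\xi^{\theta,\omega})^{-1}(\mathcal{E})=\phi_\theta^{-1}(\xi^{-1}(\mathcal{E}))^{\theta,\omega}$ with $\xi^{-1}(\mathcal{E})\in\F_r$; applying (ii) for each $\mathcal{E}$ in a fixed countable generating class of $\mathcal{B}(\R)$ and unioning the corresponding $\overline{\P}$-null exceptional sets, one gets a single null set off which $\{\mathcal{E}\in\mathcal{B}(\R):(\xi^{\theta,\omega})^{-1}(\mathcal{E})\in\F^{\theta(\omega)}_r\}$ is a $\sigma$-algebra containing the generators, hence all of $\mathcal{B}(\R)$, so $\xi^{\theta,\omega}\in L^0(\Omega,\F^{\theta(\omega)}_r)$ whenever $\theta(\omega)\le r$. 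The only genuinely delicate point in the whole argument is the initial reduction — passing from the $\overline{\mathbb{G}}$-stopping time $\theta$ to an a.s.-equal $\mathbb{G}$-optional time so that Lemma~\ref{lem:strong Markov} becomes applicable, plus the attendant bookkeeping to keep the various "$\overline{\P}$-a.e.\ $\omega$" exceptional sets legitimate; everything else is a faithful transcription of Lemma~\ref{lem:measurability} with the deterministic time $t$ replaced, for each frozen $\omega$, by $\theta(\omega)$.
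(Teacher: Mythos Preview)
Your proof is correct and follows essentially the same route as the paper: reduce $\theta$ to an a.s.-equal $\mathbb{G}$-optional time via \cite[Exercise~2.7.11]{KS-book-91}, then combine Lemma~\ref{lem:measurability} (applied with $t=\theta(\omega)$ frozen) with Lemma~\ref{lem:strong Markov} to handle null sets, and decompose $A\in\F_r$ into a $\G_r$-set plus a completed null set. The only notable differences are cosmetic---you use a symmetric-difference decomposition in (ii) where the paper uses a disjoint union, and in (iii) you are explicitly careful to union the exceptional null sets over a countable generating class of $\mathcal{B}(\R)$, a point the paper glosses over.
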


\begin{proof}
(i) Take $N\in\cN$ such that $\overline{N}\subseteq N$. By \cite[Exercise 2.7.11]{KS-book-91}, there exists a $\mathbb{G}$-optional time $\tau$ such that $\overline{N}_1:=\{\theta\neq\tau\}\in\overline{\cN}$. By Lemma~\ref{lem:strong Markov}, there exists $\overline{N}_2\in\cN\subset\overline{\cN}$ such that 
$0=\P[N\mid \G_{\tau+}](\omega)=\P^{\tau(\omega)}(N^{\tau,\omega}),\ \hbox{for}\ \omega\in\Omega\setminus\overline{N}_2$.
Thus, for $\omega\in\Omega\setminus(\overline{N}_1\cup\overline{N}_2)$, we have $0=\P^{\tau(\omega)}(N^{\tau,\omega})=\P^{\theta(\omega)}(N^{\theta,\omega})$, i.e. $N^{\theta,\omega}\in\cN^{\theta(\omega)}$. Since $\overline{N}^{\theta,\omega}\subseteq N^{\theta,\omega}$, we have $\overline{N}^{\theta,\omega}\in\overline{\cN}^{\theta(\omega)}$ $\overline{\P}$-a.s. 

On the other hand, from  Lemma~\ref{lem:measurability} (iii), $\P(\phi_{\theta}^{-1}N^{\theta,\omega})=\P^{\theta(\omega)}(N^{\theta,\omega})=0$ for $\omega\in\Omega\setminus(\overline{N}_1\cup\overline{N}_2)$, which shows $\phi_{\theta}^{-1}N^{\theta,\omega}\in\cN$ $\overline{\P}$-a.s. Since $\phi_{\theta}^{-1}\overline{N}^{\theta,\omega}\subseteq\phi_{\theta}^{-1}N^{\theta,\omega}$, we conclude $\phi_{\theta}^{-1}\overline{N}^{\theta,\omega}\in\overline{\cN}$ $\overline{\P}$-a.s.  

(ii) By \cite[Problem 2.7.3]{KS-book-91}, there exist $\tilde{A}\in\G_r$ and $\overline{N}\in\overline{\cN}$ such that $A=\tilde{A}\cup\overline{N}$ and $\tilde{A}\cap\overline{N}=\emptyset$. From Lemma~\ref{lem:measurability} (ii), we know that for any $\omega\in\Omega$, if $\theta(\omega)\le r$ then $\tilde{A}^{\theta,\omega}\in\cH^{\theta(\omega)}_r\subseteq\G^{\theta(\omega)}_r$. Also, from part (i) we have $\overline{N}^{\theta,\omega}\in\overline{\cN}^{\theta(\omega)}$ $\overline{\P}$-a.s. We therefore conclude that for $\overline{\P}$-a.e. $\omega\in\Omega$, if $\theta(\omega)\le r$, then $A^{\theta,\omega}=\tilde{A}^{\theta,\omega}\cup\overline{N}^{\theta,\omega}\in \cH^{\theta(\omega)}_r\cup\overline{\cN}^{\theta(\omega)}\subseteq\oG^{\theta(\omega)}_r$. Then, thanks to part (i) and Definition~\ref{defn:F}, it holds $\overline{\P}$-a.s. that $\phi_{\theta}^{-1}A^{\theta,\omega}=\phi_{\theta}^{-1}\tilde{A}^{\theta,\omega}\cup\phi_{\theta}^{-1}\overline{N}^{\theta,\omega}\in\phi_\theta^{-1}\cH^{\theta(\omega)}_r\cup\overline{\cN}\subseteq\F^{\theta(\omega)}_r$ if $\theta(\omega)\le r$.

(iii) Let $\mathcal{E}$ be a Borel subset of $\R$. Since $\xi^{-1}(\mathcal{E})\in\F_r$, we see from part (ii) that, for $\overline{\P}$-a.e. $\omega\in\Omega$, 
$\left(\xi^{\theta,\omega}\right)^{-1}(\mathcal{E})=\{\omega'\in\Omega\mid\xi(\omega\otimes_{\theta}\phi_{\theta}(\omega'))\in\mathcal{E}\}=\{\omega'\in\Omega\mid \omega\otimes_{\theta}\phi_{\theta}(\omega')\in\xi^{-1}(\mathcal{E})\}=\phi^{-1}_{\theta}(\xi^{-1}(\mathcal{E}))^{\theta,\omega}\in\F^{\theta(\omega)}_r$ if $\theta(\omega)\le r$.
\end{proof}

Now, we generalize Lemma~\ref{lem:strong Markov} to incorporate $\mathbb{F}$-stopping times.

\begin{lem}\label{lem:strong Markov 1-2}
Fix $\theta\in\T$. For any $A\in\F_T$, $\overline{\P}[A\mid\F_\theta](\omega)=\overline{\P}^{\theta(\omega)}(A^{\theta,\omega})$, for $\overline{\P}\hbox{-a.e.}\ \omega\in\Omega$.
\end{lem}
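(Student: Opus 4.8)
The plan is to reduce the statement to the case $A\in\G_T$ and then to transport Lemma~\ref{lem:strong Markov}—which lives on the raw filtration $\mathbb{G}$, a $\mathbb{G}$-optional time, and the Wiener measure $\P$—up to the augmented, completed setting by replacing $\theta$ with an a.s.\ equal $\mathbb{G}$-optional time. All the matching of $\sigma$-algebras, null sets, and measures between $(\Omega,\mathbb{F},\overline\P)$ and $(\Omega^{t},\overline{\mathbb{G}}^{t},\overline\P^{t})$ is already packaged in Lemmas~\ref{lem:measurability} and~\ref{lem:measurability 2}, so the work is mostly assembling these pieces.

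First I would reduce to $A\in\G_T$. Using \cite[Problem 2.7.3]{KS-book-91}, write $A=\tilde A\cup\overline N$ with $\tilde A\in\G_T$, $\overline N\in\overline\cN$, and $\tilde A\cap\overline N=\emptyset$. On the left-hand side, $\overline\P(\overline N)=0$ gives $\overline\P[A\mid\F_\theta]=\overline\P[\tilde A\mid\F_\theta]$ $\overline\P$-a.s.; on the right-hand side, $A^{\theta,\omega}=\tilde A^{\theta,\omega}\cup\overline N^{\theta,\omega}$, and Lemma~\ref{lem:measurability 2}(i) gives $\overline N^{\theta,\omega}\in\overline\cN^{\theta(\omega)}$ for $\overline\P$-a.e.\ $\omega$, so together with $\tilde A^{\theta,\omega}\in\G^{\theta(\omega)}_{T}$ (Lemma~\ref{lem:measurability}(ii)) and the fact that $\overline\P^{\theta(\omega)}$ extends $\P^{\theta(\omega)}$ one gets $\overline\P^{\theta(\omega)}(A^{\theta,\omega})=\P^{\theta(\omega)}(\tilde A^{\theta,\omega})$ $\overline\P$-a.s. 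Hence it is enough to prove
\[
\overline\P[\tilde A\mid\F_\theta](\omega)=\P^{\theta(\omega)}(\tilde A^{\theta,\omega})\quad\text{for }\overline\P\text{-a.e.\ }\omega,\ \text{where }\tilde A\in\G_T.
\]

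Next I would pass to a raw-filtration optional time. Recall $\mathbb{F}=\overline{\mathbb{G}}$, which is right-continuous, and by \cite[Exercise 2.7.11]{KS-book-91} pick a $\mathbb{G}$-optional time $\tau$ with $\{\tau\neq\theta\}\in\overline\cN$; in particular $\tau\le T$ $\overline\P$-a.s. I would then invoke the standard identification $\F_\theta=\sigma(\G_{\tau+}\cup\overline\cN)$ for the augmented right-continuous filtration, which says: any $B\in\F_\theta$ equals some $\tilde B\in\G_{\tau+}$ up to a $\overline\P$-null set, and every $\G_{\tau+}$-measurable function is $\F_\theta$-measurable. From this, for $B\in\F_\theta$ and the corresponding $\tilde B\in\G_{\tau+}$,
\[
\E_{\overline\P}\!\left[1_{\tilde A}1_{B}\right]=\E_{\P}\!\left[1_{\tilde A}1_{\tilde B}\right]=\E_{\P}\!\left[\P[\tilde A\mid\G_{\tau+}]\,1_{\tilde B}\right]=\E_{\overline\P}\!\left[\P[\tilde A\mid\G_{\tau+}]\,1_{B}\right],
\]
so $\P[\tilde A\mid\G_{\tau+}]$ is a version of $\overline\P[\tilde A\mid\F_\theta]$. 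Finally, Lemma~\ref{lem:strong Markov} gives $\P[\tilde A\mid\G_{\tau+}](\omega)=\P^{\tau(\omega)}(\tilde A^{\tau,\omega})$ for $\P$-a.e.\ $\omega$, and on $\{\tau=\theta\}$ this equals $\P^{\theta(\omega)}(\tilde A^{\theta,\omega})$; combining with the first reduction yields the claim.

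The step I expect to be the main obstacle is the identification $\F_\theta=\sigma(\G_{\tau+}\cup\overline\cN)$: it requires care unwinding the definition of the stopping-time $\sigma$-algebra of the augmented (and, here, automatically right-continuous) filtration and comparing it with $\G_{\tau+}$ at the a.s.\ equal raw-filtration optional time $\tau$. Once this is in place, everything else is bookkeeping with the correspondences established in Lemmas~\ref{lem:measurability} and~\ref{lem:measurability 2} together with an application of Lemma~\ref{lem:strong Markov}.
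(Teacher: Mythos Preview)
Your proposal is correct and follows essentially the same route as the paper: decompose $A=\tilde A\cup\overline N$ with $\tilde A\in\G_T$, replace $\theta$ by an a.s.\ equal $\mathbb G$-optional time $\tau$ via \cite[Exercise 2.7.11]{KS-book-91}, apply Lemma~\ref{lem:strong Markov} to obtain $\P[\tilde A\mid\G_{\tau+}](\omega)=\P^{\tau(\omega)}(\tilde A^{\tau,\omega})$, and then test against $B\in\F_\theta$ written as $\tilde B$ up to a null set. The identification you flag as the ``main obstacle'' is exactly what the paper packages into the citations \cite[Exercises 2.7.3 and 2.7.11]{KS-book-91}, asserting $\F_\tau=\F_\theta$ and then decomposing any $B\in\F_\tau$ as $\tilde B\cup\overline N'$ with $\tilde B\in\G_\tau\subseteq\G_{\tau+}$.
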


\begin{proof}
Thanks again to \cite[Exercise 2.7.11]{KS-book-91}, we may take a $\mathbb{G}$-optional time $\tau$ such that $\overline{N}_1:=\{\theta\neq\tau\}\in\overline{\cN}$ and $\F_{\tau}=\F_\theta$. Moreover, we have $A=\tilde{A}\cup\overline{N}$ for some $\tilde{A}\in\G_T$ and $\overline{N}\in\overline{\cN}$ with $\tilde{A}\cap\overline{N}=\emptyset$, by using \cite[Exercise 2.7.3]{KS-book-91}. Then, in view of Lemma~\ref{lem:measurability} (ii), Lemma~\ref{lem:measurability 2} (i), and Lemma~\ref{lem:strong Markov}, we can take some $\overline{N}_2\in\overline{\cN}$ such that for $\omega\in\Omega\setminus(\overline{N}_1\cup\overline{N}_2)$,
\begin{equation}\label{cond. prob.}
\begin{split}
\overline{\P}^{\theta(\omega)}(A^{\theta,\omega})&=\overline{\P}^{\tau(\omega)}(A^{\tau,\omega})=\overline{\P}^{\tau(\omega)}(\tilde{A}^{\tau,\omega})+\overline{\P}^{\tau(\omega)}(\overline{N}^{\tau,\omega})=\P^{\tau(\omega)}(\tilde{A}^{\tau,\omega})\\
&=\P[\tilde{A}\mid \G_{\tau+}](\omega)=\overline{\P}[\tilde{A}\mid \G_{\tau+}](\omega)=\overline{\P}[A\mid \G_{\tau+}](\omega).
\end{split}
\end{equation} 
For any $B\in\F_{\tau}$, $B=\tilde{B}\cup\overline{N}'$ for some $\tilde{B}\in\G_{\tau}\subseteq\G_{\tau+}$ and $\overline{N}'\in\overline{\cN}$ with $\tilde{B}\cap\overline{N}'=\emptyset$, thanks again to \cite[Exercise 2.7.3]{KS-book-91}. We then deduce from \eqref{cond. prob.} that $\E[1_{\tilde{A}}1_B]= \E[1_{\tilde{A}}1_{\tilde{B}}]=\E\left[\overline{\P}^{\theta(\omega)}(A^{\theta,\omega})1_{\tilde{B}}\right]=\E\left[\overline{\P}^{\theta(\omega)}(A^{\theta,\omega})1_B\right].$
Hence, we conclude $\overline{\P}^{\theta(\omega)}(A^{\theta,\omega})=\overline{\P}[A\mid\F_{\tau}](\omega)=\overline{\P}[A\mid\F_{\theta}](\omega)$, for $\omega\in\Omega\setminus(\overline{N}_1\cup\overline{N}_2)$. 
\end{proof}

Finally, we are able to generalize Lemma~\ref{lem:measurability} (iii) to incorporate $\mathbb{F}$-stopping times.

\begin{prop}\label{prop:strong Markov 2}
Fix $\theta\in\T$. We have
\begin{itemize}
\item [(i)] 
for any $A\in\F_T$, 
$\overline{\P}[A\mid\F_\theta](\omega)=\overline{\P}[\phi^{-1}_{\theta}A^{\theta,\omega}]$, for $\overline{\P}\hbox{-a.e.}\ \omega\in\Omega$.
\item [(ii)] for any $\xi\in L^1(\Omega,\F_T,\overline{\P})$, $\E[\xi\mid\F_\theta](\omega)=\E\left[\xi^{\theta,\omega}\right]$ for $\overline{\P}$-a.e. $\omega\in\Omega$.
\end{itemize}
\end{prop}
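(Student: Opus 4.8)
The plan is to establish Proposition~\ref{prop:strong Markov 2} by reducing part (i) to Lemma~\ref{lem:strong Markov 1-2} via Lemma~\ref{lem:measurability} (iii) and Lemma~\ref{lem:measurability 2}, and then obtaining part (ii) from part (i) by the usual approximation of integrable random variables by simple functions.

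\textbf{Part (i).} Fix $A\in\F_T$ and recall from Lemma~\ref{lem:strong Markov 1-2} that $\overline{\P}[A\mid\F_\theta](\omega)=\overline{\P}^{\theta(\omega)}(A^{\theta,\omega})$ for $\overline{\P}$-a.e.\ $\omega\in\Omega$. It therefore suffices to show that $\overline{\P}^{\theta(\omega)}(A^{\theta,\omega})=\overline{\P}[\phi_\theta^{-1}A^{\theta,\omega}]$ for $\overline{\P}$-a.e.\ $\omega$. By \cite[Exercise 2.7.3]{KS-book-91}, write $A=\tilde{A}\cup\overline{N}$ with $\tilde{A}\in\G_T$, $\overline{N}\in\overline{\cN}$, and $\tilde{A}\cap\overline{N}=\emptyset$. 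For $\overline{\P}$-a.e.\ $\omega$ we then have $A^{\theta,\omega}=\tilde{A}^{\theta,\omega}\cup\overline{N}^{\theta,\omega}$ with $\tilde{A}^{\theta,\omega}\in\G^{\theta(\omega)}_T$ (Lemma~\ref{lem:measurability} (ii) applied with $r=T$) and $\overline{N}^{\theta,\omega}\in\overline{\cN}^{\theta(\omega)}$ (Lemma~\ref{lem:measurability 2} (i)); consequently $\overline{\P}^{\theta(\omega)}(A^{\theta,\omega})=\P^{\theta(\omega)}(\tilde{A}^{\theta,\omega})$. On the other hand, $\phi_\theta^{-1}A^{\theta,\omega}=\phi_\theta^{-1}\tilde{A}^{\theta,\omega}\cup\phi_\theta^{-1}\overline{N}^{\theta,\omega}$, where $\phi_\theta^{-1}\overline{N}^{\theta,\omega}\in\overline{\cN}$ for $\overline{\P}$-a.e.\ $\omega$ by Lemma~\ref{lem:measurability 2} (i), so that $\overline{\P}[\phi_\theta^{-1}A^{\theta,\omega}]=\P[\phi_\theta^{-1}\tilde{A}^{\theta,\omega}]$. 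It remains to observe that, for $\overline{\P}$-a.e.\ $\omega$, $\P[\phi_\theta^{-1}\tilde{A}^{\theta,\omega}]=\P^{\theta(\omega)}(\tilde{A}^{\theta,\omega})$; this follows from Lemma~\ref{lem:measurability} (iii), applied $\omega$-by-$\omega$ with $t$ replaced by $\theta(\omega)$ (and with $r=T$), exactly as in the deterministic-time case. Combining the three identities gives $\overline{\P}^{\theta(\omega)}(A^{\theta,\omega})=\overline{\P}[\phi_\theta^{-1}A^{\theta,\omega}]$ for $\overline{\P}$-a.e.\ $\omega$, which together with Lemma~\ref{lem:strong Markov 1-2} proves (i).

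\textbf{Part (ii).} First suppose $\xi=1_A$ for some $A\in\F_T$. Since $(1_A)^{\theta,\omega}(\omega')=1_A(\omega\otimes_\theta\phi_\theta(\omega'))=1_{\phi_\theta^{-1}A^{\theta,\omega}}(\omega')$, part (i) gives $\E[1_A\mid\F_\theta](\omega)=\overline{\P}[\phi_\theta^{-1}A^{\theta,\omega}]=\E[(1_A)^{\theta,\omega}]$ for $\overline{\P}$-a.e.\ $\omega$. By linearity the identity $\E[\xi\mid\F_\theta](\omega)=\E[\xi^{\theta,\omega}]$ extends to all nonnegative $\F_T$-measurable simple functions, noting that the shift operation $\xi\mapsto\xi^{\theta,\omega}$ is linear and $\xi^{\theta,\omega}\in L^0(\Omega,\F_T)$ for $\overline{\P}$-a.e.\ $\omega$ by Lemma~\ref{lem:measurability 2} (iii) (with $r=T$). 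For a general nonnegative $\xi\in L^1(\Omega,\F_T,\overline{\P})$, pick simple functions $\xi_n\uparrow\xi$; then $\xi_n^{\theta,\omega}\uparrow\xi^{\theta,\omega}$ pointwise, so the monotone convergence theorem applied both to the conditional expectation and to the expectation $\E[\cdot]$ yields $\E[\xi\mid\F_\theta](\omega)=\lim_n\E[\xi_n\mid\F_\theta](\omega)=\lim_n\E[\xi_n^{\theta,\omega}]=\E[\xi^{\theta,\omega}]$ for $\overline{\P}$-a.e.\ $\omega$; the $\overline{\P}$-a.e.\ statement survives the countable limit because it involves only countably many exceptional null sets. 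Finally, decompose a general $\xi\in L^1$ as $\xi^+-\xi^-$ and use linearity once more, observing that $\E[\xi^{+,\theta,\omega}]=\E[\xi^+\mid\F_\theta](\omega)<\infty$ $\overline{\P}$-a.s., and similarly for $\xi^-$, so the difference is well-defined almost surely.

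The only genuinely delicate point is the passage, inside part (i), from the \emph{deterministic-time} Lemma~\ref{lem:measurability} (iii) to its use with the random time $\theta(\omega)$: one must be careful that the null set on which $\tilde{A}^{\theta,\omega}\in\G^{\theta(\omega)}_T$ and $\overline{N}^{\theta,\omega}\in\overline{\cN}^{\theta(\omega)}$ and $\phi_\theta^{-1}\overline{N}^{\theta,\omega}\in\overline{\cN}$ can be taken uniformly, but this is exactly what Lemma~\ref{lem:measurability 2} (i)--(ii) provide. Everything else is bookkeeping with $\sigma$-algebras and standard measure-theoretic approximation, so I do not expect any real obstacle beyond keeping the exceptional null sets under control.
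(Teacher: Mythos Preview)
Your proof is correct and follows essentially the same route as the paper's: for part (i) you reduce to Lemma~\ref{lem:strong Markov 1-2} and connect $\overline{\P}^{\theta(\omega)}(A^{\theta,\omega})$ to $\overline{\P}[\phi_\theta^{-1}A^{\theta,\omega}]$ via the decomposition $A=\tilde{A}\cup\overline{N}$ together with Lemma~\ref{lem:measurability} (iii) and Lemma~\ref{lem:measurability 2} (i); for part (ii) you pass from indicators to simple functions to nonnegative $L^1$ functions by monotone convergence, then to general $\xi$ by decomposition into positive and negative parts, exactly as in the paper. Your closing remark about controlling the exceptional null sets when replacing the deterministic $t$ by $\theta(\omega)$ is precisely the bookkeeping the paper handles implicitly through Lemma~\ref{lem:measurability 2}.
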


\begin{proof}
(i) By Lemma~\ref{lem:measurability 2} (i) and Lemma~\ref{lem:measurability} (iii), it holds $\overline{\P}$-a.s. that 
\[
\overline{\P}[\phi_{\theta}^{-1}A^{\theta,\omega}]=\overline{\P}[\phi_{\theta}^{-1}\tilde{A}^{\theta,\omega}]+\overline{\P}[\phi_{\theta}^{-1}\overline{N}^{\theta,\omega}]=\P[\phi_{\theta}^{-1}\tilde{A}^{\theta,\omega}]=\P^{\theta(\omega)}[\tilde{A}^{\theta,\omega}]=\overline{\P}^{\theta(\omega)}[\tilde{A}^{\theta,\omega}]=\overline{\P}^{\theta(\omega)}[A^{\theta,\omega}].
\] 
The desired result then follows from the above equality and Lemma~\ref{lem:strong Markov 1-2}.
 
(ii) Given $A\in\F_T$, observe that for any fixed $\omega\in\Omega$,  
$(1_A)^{\theta,\omega}(\omega')=1_A\left(\omega\otimes_{\theta}\phi_{\theta}(\omega')\right)
=1_{\phi_{\theta}^{-1}A^{\theta,\omega}}(\omega')$. Then we see immediately from part (i) that part (ii) is true for $\xi=1_A$. It follows that part (ii) also holds true for any $\F_T$-measurable simple function $\xi$. For any positive $\xi\in L^1(\Omega,\F_T,\overline{\P})$, we can take a sequence $\{\xi_n\}_{n\in\N}$ of $\F_T$-measurable simple functions such that $\xi_n(\omega)\uparrow\xi(\omega)\ \forall \omega\in\Omega$. By the monotone convergence theorem, there exists $\overline{N}\in\overline{\cN}$ such that $\E[\xi_n\mid\F_\theta](\omega)\uparrow\E[\xi\mid\F_\theta](\omega)$, for $\omega\in\Omega\setminus\overline{N}$. For each $n\in\N$, since $\xi_n$ is an $\F_T$-measurable simple function, there exists $\overline{N}_n\in\overline{\cN}$ such that $\E[\xi_n\mid\F_\theta](\omega)=\E\left[(\xi_n)^{\theta,\omega}\right]$, for $\omega\in\Omega\setminus\overline{N}_n$. Finally, noting that there exists $\overline{N}'\in\overline{\cN}$ such that $\xi^{\theta,\omega}$ is $\F_T$-measurable for $\omega\in\Omega\setminus\overline{N}'$ (from Lemma~\ref{lem:measurability 2} (iii)) and that
$(\xi_n)^{\theta,\omega}(\omega')\uparrow\xi^{\theta,\omega}(\omega')\ \forall \omega'\in\Omega$ (from the everywhere convergence $\xi_n\uparrow\xi$), we obtain from the monotone convergence theorem again that for $\omega\in\Omega\setminus\left((\bigcup_{n\in\N}\overline{N}_n)\cup\overline{N}\cup\overline{N}'\right)$, 
\[
\E[\xi\mid\F_\theta](\omega)=\lim_{n\to\infty}\E[\xi_n\mid\F_\theta](\omega)=\lim_{n\to\infty}\E[(\xi_n)^{\theta,\omega}]=\E[\xi^{\theta,\omega}].
\]
The same result holds true for any general $\xi\in L^1(\Omega,\F_T,\overline{\P})$ as $\xi=\xi^+-\xi^-$.
\end{proof}

\subsection{Proof of Proposition~\ref{prop:xi indep. of F_t}}\label{subsec:xi indep. of F_t}
\begin{proof}
(i) Set $\Lambda:=\{A\subseteq\Omega \mid \overline{\P}(A\cap B)=\overline{\P}(A)\overline{\P}(B)\ \forall B\in\F_t\}$. It can be checked that $\Lambda$ is a $\sigma$-algebra of $\Omega$. Take $A\in\phi^{-1}_t\cH^t_T\cup\overline{\cN}$. If $A\in\overline{\cN}$, it is trivial that $A\in\Lambda$; if $A=\phi^{-1}_t C$ with $C\in\cH^t_T$, then for any $B\in\F_t$,
\begin{equation*}
\overline{\P}(A\cap B)=\overline{\P}(B\cap \phi^{-1}_t C) = \E\left[\overline{\P}(B\cap\phi^{-1}_t C\mid\F_t)\right] = \E\left[\overline{\P}(B\cap\phi^{-1}_t C\mid\F_t)(\omega)1_B(\omega)\right].
\end{equation*}
By Proposition~\ref{prop:strong Markov 2} (i), for $\overline{\P}$-a.e. $\omega\in\Omega$, $\overline{\P}(B\cap\phi^{-1}_t C\mid\F_t)(\omega)=\overline{\P}[\phi^{-1}_t(B\cap\phi^{-1}_tC)^{t,\omega}]=\overline{\P}[\phi^{-1}_tC]=\overline{\P}(A)$ if $\omega\in B$. We therefore have $\overline{\P}(A\cap B)=\overline{\P}(A)\overline{\P}(B)$, and conclude $A\in\Lambda$. It follows that $\phi^{-1}_t\cH^t_T\cup\overline{\cN}\subseteq\Lambda$, which implies $\F^t_T=\sigma(\phi^{-1}_t\cH^t_T\cup\overline{\cN})\subseteq\Lambda$. Thus, $\F^t_T$ and $\F_t$ are independent. 

(ii) Let $\Delta$ denote the set operation of symmetric difference. Set $\Lambda:=\{A\subseteq\Omega\mid (\phi_t^{-1}A^{t,\omega})\Delta A\in\overline{\cN}\ \hbox{for}\ \overline{\P}\hbox{-a.e.}\ \omega\in\Omega\}$. It can be checked that $\Lambda$ is a $\sigma$-algebra of $\Omega$. Take $A\in\phi^{-1}_t\cH^t_T\cup\overline{\cN}$. If $A\in\overline{\cN}$, we see from Lemma~\ref{lem:measurability 2} (i) that $A\in\Lambda$; if $A=\phi^{-1}_tC$ with $C\in\cH^t_T$, then $\phi_t^{-1}A^{t,\omega}=\phi_t^{-1}C=A$ for all $\omega\in\Omega$, and thus $A\in\Lambda$. We then conclude that $\F^t_T=\sigma(\phi^{-1}_t\cH^t_T\cup\overline{\cN})\subseteq\Lambda$. 

Take a sequence $\{\xi_n\}$ of random variables in $L^0(\Omega,\F^t_T)$ taking countably many values $\{r_i\}_{i\in\N}$ such that $\xi_n(\omega)\to\xi(\omega)$ for all $\omega\in\Omega$. This everywhere convergence implies that for any fixed $\omega\in\Omega$, $(\xi_n)^{t,\omega}(\omega')\to\xi^{t,\omega}(\omega')$ for all $\omega'\in\Omega$. Now, fix $n\in\N$. For each $i\in\N$, since $(\xi_n)^{-1}\{r_i\}\in\F^t_T\subseteq\Lambda$, there exists $\overline{N}^n_i\in\overline{\cN}$ such that for $\omega\in\Omega\setminus\overline{N}^n_i$,
\begin{equation}\label{symmetric difference}
\left(\left[(\xi_n)^{t,\omega}\right]^{-1}\{r_i\}\right)\Delta(\xi_n)^{-1}\{r_i\}=\left[\phi^{-1}_t\left((\xi_n)^{-1}\{r_i\}\right)^{t,\omega}\right]\Delta(\xi_n)^{-1}\{r_i\}=:\overline{M}^n_i\in\overline{\cN},
\end{equation}
where the first equality follows from the calculation in the proof of Lemma~\ref{lem:measurability 2} (iii). Then, we deduce from \eqref{symmetric difference} that: for any fixed $\omega\in\Omega\setminus\bigcup_{i\in\N}\overline{N}^n_i$, $(\xi_n)^{t,\omega}(\omega')=\xi_n(\omega')$ for all $\omega'\in\Omega\setminus\bigcup_{i\in\N}\overline{M}^n_i$. It follows that: for any fixed $\omega\in\Omega\setminus\bigcup_{i,n\in\N}\overline{N}^n_i$, $(\xi_n)^{t,\omega}(\omega')=\xi_n(\omega')$ for all $\omega'\in\Omega\setminus\bigcup_{i,n\in\N}\overline{M}^n_i$ and $n\in\N$. Setting $\overline{N}=\bigcup_{i,n\in\N}\overline{N}^n_i$ and $\overline{M}=\bigcup_{i,n\in\N}\overline{M}^n_i$, we obtain that for any $\omega\in\Omega\setminus\overline{N}$,
\[
\xi(\omega')=\lim_{n\to\infty}\xi_n(\omega')=\lim_{n\to\infty}(\xi_n)^{t,\omega}(\omega')=\xi^{t,\omega}(\omega'),\ \hbox{for}\ \omega'\in\Omega\setminus\overline{M}.
\]
\end{proof}

\subsection{Proof of Proposition~\ref{lem: tau a stopping time a.s.}}\label{subsec: tau a stopping time a.s.}
\begin{proof}
Take a sequence of stopping times $\{\tau_i\}_{i\in\N}\subset \T$ such that $\tau_i$ takes values in $\{m/2^i\mid m\in\N\}$ for each $i\in\N$ and $\tau_i(\omega)\downarrow\tau(\omega)$ for all $\omega\in\Omega$ (thanks to \cite[Problem 1.2.24]{KS-book-91}). Set $\overline{N}:=\{\tau<\theta\}\in\overline{\cN}$. Since $\tau_i(\omega)\downarrow\tau(\omega)$ for all $\omega\in\Omega$, we have $\tau_i\ge\theta$ on $\Omega\setminus\overline{N}$ for all $i\in\N$. For each $i\in\N$, let $r^i_m:=m/2^i$, $m\in\N$. Since $\{\tau_i\le r^i_m\}\in\F_{r^i_m}$ for all $m\in\N$, we deduce from Lemma~\ref{lem:measurability 2} (ii) and the countability of $\{r^i_m\}_{m\in\N}$ that there exists $\overline{N}^i\in\overline{\cN}$ such that for $\omega\in\Omega\setminus \overline{N}^i$,
\begin{equation}\label{{tau_i<r}}
\hbox{if}\ \theta(\omega)\le r^i_m,\ \phi_\theta^{-1}\{\tau_i\le r^i_m\}^{\theta,\omega}\in\F^{\theta(\omega)}_{r^i_m}\ \hbox{for all}\ m\in\N.
\end{equation}
Fix $r\in[0,T]$. For any $\omega\in\Omega\setminus(\overline{N}\cup\overline{N}^i)$, if $\theta(\omega)> r$, then $\tau_i(\omega)\ge\theta(\omega)>r$ and thus $\phi_\theta^{-1}\{\tau_i\le r\}^{\theta,\omega}=\phi^{-1}_\theta\emptyset=\emptyset\in\F^{\theta(\omega)}_r$; if $\theta(\omega)\le r$, there are two cases: {\bf 1.} $\exists$ $m^*\in\N$ s.t. $r^i_{m^*}\in[\theta(\omega),r]$ and $r^i_{m^*+1}>r$. Then, by \eqref{{tau_i<r}}, $\phi^{-1}_\theta\{{\tau_i\le r}\}^{\theta,\omega}=\phi^{-1}_\theta\{\tau_i\le r^i_{m^*}\}^{\theta,\omega}\in\F^{\theta(\omega)}_{r^i_{m^*}}\subset\F^{\theta(\omega)}_r$; {\bf 2.} $\exists$ $m^*\in\N$ s.t. $r^i_{m^*}<\theta(\omega)$ and $r^i_{m^*+1}>r$. Since $\tau_i(\omega)\ge\theta(\omega)>r^i_{m^*}$, $\phi_\theta^{-1}\{\tau_i\le r\}^{\theta,\omega}=\phi^{-1}_\theta\{\tau_i\le r^i_{m^*}\}^{\theta,\omega}=\phi^{-1}_\theta\emptyset=\emptyset\in\F^{\theta(\omega)}_r$. Thus, for $\omega\in\Omega\setminus(\overline{N}\cup\overline{N}^i)$, we have $\phi_\theta^{-1}\{\tau_i\le r\}^{\theta,\omega}\in\F^{\theta(\omega)}_r$, and therefore 
\begin{equation*}
\{\tau_i^{\theta,\omega}\le r\} = \{\tau_i\left(\omega\otimes_\theta\phi_{\theta}(\omega')\right)\le r\}
=\phi_{\theta}^{-1}\{\tau_i\le r\}^{\theta,\omega}\in\F^{\theta(\omega)}_r,\ \forall\ r\in[0,T].
\end{equation*}
This shows that $\tau_i^{\theta,\omega}\in\T^{\theta(\omega)}_{\theta(\omega),T}$ for $\omega\in\Omega\setminus(\overline{N}\cup\overline{N}^i)$. Hence, for $\omega\in\Omega\setminus\left(\overline{N}\cup(\bigcup_{i\in\N}\overline{N}^i)\right)$, we have $\tau_i^{\theta,\omega}\in\T^{\theta(\omega)}_{\theta(\omega),T}$ $\forall i\in\N$. Finally, since the filtration $\mathbb{F}^{\theta(\omega)}$ is right-continuous,  $\tau^{\theta,\omega}(\omega')=\downarrow\lim_{i\to\infty}\tau_i^{\theta,\omega}(\omega')$ (this is true since $\tau_i\downarrow\tau$ everywhere) must also be a stopping time in $\T^{\theta(\omega)}_{\theta(\omega),T}$.
\end{proof}

\subsection{Proof of Proposition~\ref{prop:alpha^t,w=alpha}}\label{subsec:alpha^t,w=alpha}
Recall the metric $\tilde{\rho}$ on $\A$ defined in \eqref{metric on A}. We say $\beta\in\A$ is a step control if there exists a subdivision $0=t_0<t_1<\cdots<t_m=T$, $m\in\N$, of the interval $[0,T]$ such that $\beta_t=\beta_{t_i}$ for $t\in[t_i,t_{i+1})$ for $i=0,1,\cdots,m-1$.

\begin{proof}
By \cite[Lemma 3.2.6]{Krylov-book-80}, there exist a sequence $\{\alpha^n\}$ of step controls such that $\alpha^n\to\alpha$. For each $n\in\N$, in view of Proposition~\ref{prop:xi indep. of F_t} (ii), there exist $\overline{N}_n,\overline{M}_n\in\overline{\cN}$ such that: for any fixed $\omega\in\Omega\setminus\overline{N}_n$, $(\alpha^n_r)^{t,\omega}(\omega')=\alpha^n_r(\omega')$ for $(r,\omega')\in[0,T]\times(\Omega\setminus\overline{M}_n)$. It follows that: for any fixed $\omega\in\Omega\setminus\bigcup_{n\in\N}\overline{N}_n$, $(\alpha^n_r)^{t,\omega}(\omega')=\alpha^n_r(\omega')$ for all $(r,\omega')\in[0,T]\times(\Omega\setminus\bigcup_{n\in\N}\overline{M}_n)$ and $n\in\N$. With the aid of Proposition~\ref{prop:strong Markov 2} (ii), we obtain
\begin{equation*}
\begin{split}
0&=\lim_{n\to\infty}\tilde{\rho}(\alpha^n,\alpha)=\lim_{n\to\infty}\E\left[\int_0^T\rho'(\alpha^n_r,\alpha_r)dr\right]=\lim_{n\to\infty}\E\left(\E\left[\int_0^T\rho'(\alpha^n_r,\alpha_r)dr\ \middle|\ \F_t\right](\omega)\right)\\
&=\lim_{n\to\infty}\int\int\bigg(\int_0^T\rho'(\alpha^n_r,\alpha_r)dr\bigg)^{t,\omega}(\omega')\ d\overline{\P}(\omega')\ d\overline{\P}(\omega)\\
&=\lim_{n\to\infty}\int\int\int_0^T\rho'\bigg((\alpha^n_r)^{t,\omega}(\omega'),\alpha^{t,\omega}_r(\omega')\bigg)dr\ d\overline{\P}(\omega')\ d\overline{\P}(\omega)\\
&=\lim_{n\to\infty}\int \tilde{\rho}\left((\alpha^n)^{t,\omega},\alpha^{t,\omega}\right) d\overline{\P}(\omega)=\lim_{n\to\infty}\int \tilde{\rho}(\alpha^n,\alpha^{t,\omega}) d\overline{\P}(\omega)=\int\lim_{n\to\infty}\tilde{\rho}(\alpha^n,\alpha^{t,\omega}) d\overline{\P}(\omega),
\end{split}
\end{equation*}
where the last equality is due to the dominated convergence theorem. This implies that 
$0=\lim_{n\to\infty}\tilde{\rho}(\alpha^n,\alpha^{t,\omega}),\ \hbox{for}\ \overline{\P}\hbox{-a.e.}\ \omega\in\Omega$. Recalling that $\alpha^n\to\alpha$, we conclude that $\tilde{\rho}(\alpha^{t,\omega},\alpha)=0$ for $\overline{\P}$-a.e. $\omega\in\Omega$. The second assertion follows immediately from \cite[Exercise 3.2.4]{Krylov-book-80}.
\end{proof}


\subsection{Proof of Lemma~\ref{lem:E = J}}\label{subsec:E = J}
\begin{proof}
By taking $\xi=F(\bX^{t,\bx,\alpha}_\tau)$ in Proposition~\ref{prop:strong Markov 2} (ii) and using Remark~\ref{rem:flow property} (ii),
\begin{equation*}
\begin{split}
&\hspace{0.2in}\E[F(\bX^{t,\bx,\alpha}_\tau)\mid\F_\theta](\omega)=\E\left[F(\bX^{t,\bx,\alpha}_\tau)^{\theta,\omega}\right]=\int F\left(\bX^{t,\bx,\alpha}_\tau\left(\omega\otimes_\theta\phi_\theta(\omega')\right)\right)d\overline{\P}(\omega')\\
&=\int F\left(\bX^{\theta(\omega),\bX^{t,\bx,\alpha}_{\theta}(\omega),\alpha^{\theta,\omega}}_{\tau^{\theta,\omega}}(\omega')\right)d\overline{\P}(\omega')
=J\left(\theta(\omega),\bX^{t,\bx,\alpha}_{\theta}(\omega);\alpha^{\theta,\omega},\tau^{\theta,\omega}\right),\ \hbox{for}\ \overline{\P}\hbox{-a.e.}\ \omega\in\Omega.
\end{split}
\end{equation*} 
\end{proof}

\bibliographystyle{siam}
\bibliography{refs}

\end{document}